\numberwithin{equation}{section}
\newtheorem{theorem}[equation]{Theorem}
\newtheorem{proposition}[equation]{Proposition}
\newtheorem{lemma}[equation]{Lemma}
\newtheorem{corollary}[equation]{Corollary}
\theoremstyle{definition}
\newtheorem{rmk}[equation]{Remark}
\newenvironment{remark}[1][]{\begin{rmk}[#1] \pushQED{\qed}}{\popQED \end{rmk}}
\newtheorem{eg}[equation]{Example}
\newenvironment{example}[1][]{\begin{eg}[#1] \pushQED{\qed}}{\popQED \end{eg}}
\newtheorem{defnaux}[equation]{Definition}
\newenvironment{definition}[1][]{\begin{defnaux}[#1]\pushQED{\qed}}{\popQED \end{defnaux}}
\newcommand{\bC}{\mathbf{C}}
\newcommand{\fC}{\mathfrak{C}}
\newcommand{\sE}{\mathscr{E}}
\newcommand{\bF}{\mathbf{F}}
\newcommand{\fF}{\mathfrak{F}}
\newcommand{\sF}{\mathscr{F}}
\newcommand{\bN}{\mathbf{N}}
\newcommand{\bQ}{\mathbf{Q}}
\newcommand{\bS}{\mathbf{S}}
\newcommand{\fS}{\mathfrak{S}}
\newcommand{\bZ}{\mathbf{Z}}
\newcommand{\arxiv}[1]{\href{http://arxiv.org/abs/#1}{{\tiny\tt arXiv:#1}}}
\newcommand{\DOI}[1]{\href{http://doi.org/#1}{\color{purple}{\tiny\tt DOI:#1}}}
\newcommand{\myuline}[1]{%
  \uline{\phantom{#1}}%
  \llap{\contour{white}{#1}}%
}
\DeclareMathOperator{\uRep}{\text{\myuline{\rm Rep}}}
\renewcommand{\phi}{\varphi}
\DeclareMathOperator{\Aut}{Aut}
\DeclareMathOperator{\acl}{acl}
\newcommand{\defn}[1]{\emph{#1}}
\newcommand{\bone}{\mathbf{1}}
\DeclareMathOperator{\type}{\mathrm{tp}}
\DeclareMathOperator{\GEN}{\mathtt{Gen}}
\DeclareMathOperator{\FMM}{\mathtt{FMM}}
\DeclareMathOperator{\AUT}{\mathtt{Aut}}
\DeclareMathOperator{\IND}{\mathtt{Ind}}
\DeclareMathOperator{\DIS}{\mathtt{DIS}}
\title{Upper bounds for measures on distal classes}
\author{Ilia Nekrasov}
\address{Department of Mathematics, University of Michigan, Ann Arbor, MI, USA}
\email{\href{mailto:inekras@umich.edu}{inekras@umich.edu}}
\author{Andrew Snowden}
\address{Department of Mathematics, University of Michigan, Ann Arbor, MI, USA}
\email{\href{mailto:asnowden@umich.edu}{asnowden@umich.edu}}
\urladdr{\url{http://www-personal.umich.edu/~asnowden/}}
\thanks{AS was supported by NSF grant DMS-2301871.}
\date{July 25, 2024}
\begin{document}

\begin{abstract}
In recent work, Harman and Snowden introduced a notion of measure on a Fra\"iss\'e class $\fF$, and showed how such measures lead to interesting tensor categories. Constructing and classifying measures is a difficult problem, and so far only a handful of cases have been worked out. In this paper, we obtain some of the first general results on measures. Our main theorem states that if $\fF$ is distal (in the sense of Simon), and there are some bounds on automorphism groups, then $\fF$ admits only finitely many measures; moreover, we give an effective upper bound on their number. For example, if $\fF$ is the class of ``$s$-dimensional permutations'' (finite sets equipped with $s$ total orders), we show that the number of measures is bounded above by approximately $\exp(\exp(s^2 \log{s}))$.
\end{abstract}

\maketitle
\tableofcontents

\section{Introduction}

\subsection{Overview}

Let $\fF$ be a Fra\"iss\'e class (see Definition~\ref{defn:fraisse}). In \cite{repst}, we introduced a notion of \defn{measure} on $\fF$ (see Definition~\ref{defn:Fmeas}), and used measures to construct tensor categories. We recall two particularly interesting examples:
\begin{enumerate}
\item Suppose $\fF$ is the class of finite (unstructured) sets. There is then a 1-parameter family $\mu_t$ of complex measure on $\fF$. The tensor category constructed from $\mu_t$ is essentially Deligne's interpolation category $\uRep(\fS_t)$ of the symmetric group \cite{Deligne}.
\item Suppose $\fF$ is the class of finite totally ordered sets. Then $\fF$ admits exactly four measures. One of these measures leads to the remarkable Delannoy category, studied in depth in \cite{line}. See \cite{circle} for a closely related case.
\end{enumerate}
Determing the measures on $\fF$ is an important problem for applications to tensor categories. So far, only a handful of assorted cases have been worked out; see \cite{arboreal, repst, Kriz, thesis, colored, cantor, homoperm}.

In this paper, we obtain some of the first general results on measures. Our main theorem states that if $\fF$ is a \defn{distal class} (see Theorem~\ref{mainthm}(a) and \S \ref{ss:marked}), and there are some bounds on automorphism groups, then $\fF$ has finitely many measures; moreover, we give an effective upper bound on their number. This explains the qualitative differences between the above two examples: (a) is not distal, while (b) is. We discuss a few more examples in \S \ref{ss:ex}.

\subsection{Results}

We now precisely state our results. To this end, we let $\Theta(\fF)$ denote the ring carrying the universal measure for $\fF$ (see \S \ref{ss:Fmeas}). Giving a measure for $\fF$ valued in a ring $k$ is equivalent to giving a ring homomorphism $\Theta(\fF) \to k$, and so describing all measures for $\fF$ is equivalent to computing the single ring $\Theta(\fF)$. Our main theorem provides much information about this ring:

\begin{theorem} \label{mainthm}
Suppose $\fF$ satisfies the following two conditions\footnote{In all examples we know, the first condition implies the second.}:
\begin{enumerate}
\item Distality: there are finitely many minimal marked structures (\S \ref{ss:marked}).
\item Bounded automorphism groups: there is a positive integer $m$ such that for any $X \in \fF$, the order of $\Aut(X)$ divides some power of $m$.
\end{enumerate}
We then have a ring isomorphism
\begin{displaymath}
\Theta(\fF)[1/m] = \bZ[1/m]^n
\end{displaymath}
for some $n$. Moreover, if $r$ is the number of minimal marked structures and $p$ is the smallest prime not dividing $m$ then $n \le p^r$.
\end{theorem}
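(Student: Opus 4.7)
The plan is to show that $\Theta(\fF)[1/m]$ is a finite, split étale $\bZ[1/m]$-algebra of rank at most $p^r$, which is exactly the assertion that it equals $\bZ[1/m]^n$ for some $n \le p^r$.

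First I would use distality to produce a small generating set: each of the $r$ minimal marked structures should contribute one canonical element $\theta_i \in \Theta(\fF)$, and I expect a decomposition argument to show that $\theta_1, \dots, \theta_r$ generate $\Theta(\fF)$ as a $\bZ$-algebra. In this step finiteness of minimal marked structures is translated directly into finite generation of the measure ring, with every general marked structure being reduced, via measure-theoretic relations, to polynomial expressions in the $\theta_i$.

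Next---and this is the main point---I would show that each $\theta_i$ satisfies a monic polynomial $f_i \in \bZ[1/m][T]$ of degree at most $p$ whose roots are distinct elements of $\bZ[1/m]$. Here both hypotheses cooperate: after inverting $m$ the orders of automorphism groups become units in $\bZ[1/m]$, and the prime $p$, being coprime to $m$, lets one separate the characters $\Theta(\fF)[1/m] \to \ol{\bQ}$ via their reductions modulo $p$. The degree bound $p$ should emerge from a counting argument in characteristic $p$ limiting the number of distinct specializations of $\theta_i$.

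Combining these steps gives a surjection
\begin{displaymath}
\bigotimes_{i=1}^r \bZ[1/m][T_i]/(f_i(T_i)) \twoheadrightarrow \Theta(\fF)[1/m],
\end{displaymath}
where the source, being the tensor product of split étale algebras, is a product of at most $p^r$ copies of $\bZ[1/m]$. Any quotient of such a product is itself a product of a subset of those factors, so $\Theta(\fF)[1/m] \cong \bZ[1/m]^n$ with $n \le p^r$.

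The hard part will be Step 2: producing the monic relations with separable, rational roots. Establishing the polynomial identity globally in $\Theta(\fF)[1/m]$ (rather than merely in each specialization) presumably requires an inductive analysis of how marked structures built from the minimal ones interact under the measure-theoretic operations used to present $\Theta(\fF)$. The rationality of the roots will likely demand an extra argument distilling integer-valued invariants from representations of the (bounded-order) automorphism groups, so that the roots of $f_i$ can be pinned down in $\bZ[1/m]$ and not merely in a finite extension.
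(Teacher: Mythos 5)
Your Step 1 (minimal marked structures give a finite generating set) matches the paper, but Step 2 contains a genuine gap that would sink the argument.

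You claim each generator $\theta_i$ satisfies a monic polynomial $f_i \in \bZ[1/m][T]$ of degree at most $p$ with distinct roots in $\bZ[1/m]$. This is false, already in the simplest case $m=1$, where $p=2$: the paper's Corollary~\ref{cor:m1} shows the images of the basic classes lie in $\{-1,0,1\}$, so a generator may satisfy no monic polynomial of degree $\le 2$; its minimal polynomial can be $T(T-1)(T+1)$, of degree $3 > p$. The degree-$p$ relation is only valid after reduction mod $p$, where Fermat's little theorem gives $T^p = T$ and where distinct integral values (like $\pm 1$) collapse. Your proposed ``counting argument in characteristic $p$'' cannot be lifted to a degree-$p$ polynomial over $\bZ[1/m]$ precisely because of this collapsing. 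More fundamentally, when $m>1$ the values of the generators a priori range over $\{0\} \cup \bZ[1/m]^\times$, an infinite set, so no fixed-degree monic relation over $\bZ[1/m]$ can be extracted from those constraints alone. There is a second, smaller gap: a ring quotient of $\bZ[1/m]^N$ is a product $\prod_i \bZ[1/m]/I_i$ and need not be split \'etale (it can acquire torsion), so even if your surjection existed, the final step does not follow without an additional torsion-freeness input.

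The paper routes around both problems by first proving $\Theta(G,\Omega)[1/m]$ is a finitely generated \emph{binomial} ring (Theorem~\ref{thm:theta-binom} plus Corollary~\ref{cor:theta-fg}) and then invoking Xantcha's structure theorem, which delivers the decomposition $\bZ[1/m_1]\times\cdots\times\bZ[1/m_n]$ with torsion-freeness built in. It then uses a separate filtration argument (Proposition~\ref{prop:pi}) and binomial-coefficient lemmas to pin down $m_i=m$ and values in $\{0\}\cup\bZ[1/m]^\times$, and only at the very end reduces mod $p$ to get $n\le p^r$ via $\bF_p[T_1,\dots,T_r]/(T_i^p-T_i)\twoheadrightarrow\bF_p^n$. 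Your proposal omits binomiality entirely, and this is what leaves the crucial structural steps unsupported.
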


The quantity $n$ is the number of complex measures $\fF$ admits. Thus the theorem shows that, under the assumed hypothesis, there are finitely many measures, and, moreover, there is an effective upper bound on their number. In the body of the paper, we prove a more precise and general result (Theorem~\ref{mainthm1}); for instance, we show that the values of the measures are tightly constrained (they can only use primes dividing $m$).

There is a special class of measures called \defn{regular measures} (see \S \ref{ss:reg}); essentially, these are measures that take only non-zero values. Regular measures are important in the application to tensor categories since regularity is closely related to semi-simplicity. We obtain some results on regular measures too, such as the following. We say that the Fra\"iss\'e class $\fF$ is \defn{odd} if whenever $X \subset Y$ and $X \subset Z$ are inclusions of structures, the number of amalgamations of $Y$ and $Z$ over $X$ is odd.

\begin{theorem} \label{regthm}
Suppose the conditions of Theorem~\ref{mainthm} hold with $m$ odd. Then $\fF$ admits a regular measure if and only if $\fF$ is odd.
\end{theorem}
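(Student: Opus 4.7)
The plan is to reduce everything to a computation modulo $2$ using the structure provided by Theorem~\ref{mainthm}. That theorem gives $\Theta(\fF)[1/m] = \bZ[1/m]^n$, so the $n$ complex measures $\mu_1,\ldots,\mu_n$ arise from the $n$ coordinate projections and take values in $\bZ[1/m]$. The first step will be to invoke the refined version of the main theorem (Theorem~\ref{mainthm1}) to extract the sharper statement promised in the introduction: every nonzero value $\mu_i(X)$ is supported only at primes dividing $m$, i.e., $\mu_i(X) \in \bZ[1/m]^{\times} \cup \{0\}$. Because $m$ is odd, every such unit is congruent to $1$ modulo $2$.

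Next, I will set $\bar\mu_i := \mu_i \bmod 2$. By the previous paragraph, $\bar\mu_i$ takes values in $\{0,1\} \subseteq \bF_2$, with $\bar\mu_i(X) = 1$ if and only if $\mu_i(X) \ne 0$. Hence $\mu_i$ is regular precisely when $\bar\mu_i \equiv 1$. Since $\Theta(\fF)[1/m]/(2) = \bF_2^n$, every ring homomorphism $\Theta(\fF) \to \bF_2$ is one of the $\bar\mu_i$, and a regular measure on $\fF$ exists if and only if the constant function $1$ defines an $\bF_2$-valued measure on $\fF$.

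The last step is to verify that the constant function $1$ is an $\bF_2$-valued measure on $\fF$ if and only if $\fF$ is odd. Substituting $\mu \equiv 1$ into the axioms for a measure collapses each of them to a statement of the form $1 \equiv \#\{\text{amalgamations of } Y \text{ and } Z \text{ over } X\} \pmod{2}$, which is exactly the oddness condition defined just before the theorem. Both directions of the equivalence are immediate once this translation is made.

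The main obstacle will be the first step: extracting from the proof of Theorem~\ref{mainthm1} the precise statement that nonzero measure values lie in $\bZ[1/m]^{\times}$, rather than merely in $\bZ[1/m]$. Once that is in hand, everything else reduces to formal bookkeeping modulo $2$, and the equivalence drops out of the reformulation via the reductions $\bar\mu_i$.
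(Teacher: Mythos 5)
Your argument is correct and follows essentially the same route as the paper: the paper also deduces the result from Theorem~\ref{mainthm1}(b) (values on transitive classes are $0$ or units of $\bZ[1/m]$, hence $\equiv 1 \bmod 2$ when $m$ is odd) together with the observation, recorded as Proposition~\ref{prop:odd}, that the constant function $1$ is an $\bF_2$-valued measure precisely when the class is odd. The only cosmetic difference is that the paper packages the reduction mod $2$ through the localized ring $\Theta^*(G,\Omega)[1/m]\cong\bZ[1/m]^{\ell}$ with $\ell\le 1$, whereas you work directly with $\Theta(\fF)[1/m]/(2)\cong\bF_2^n$; and the ``obstacle'' you anticipate is not one, since the unit-or-zero statement is exactly Theorem~\ref{mainthm1}(b).
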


The above result is significant since it gives a completely combinatorial characterization of when regular measures exist. See \S \ref{ss:reg} for more precise results.

\subsection{Examples} \label{ss:ex}

We give a few examples of our main theorem; see \S \ref{ss:ex-details} for details.

(a) Let $\fF_s$ be the class of $s$-dimensional permutations, i.e., finite sets equipped with $s$ total orders. This class has received some attention in the literature, e.g., \cite{CameronPerms, BS}. Theorem~\ref{mainthm} applies to $\fF_s$ with $m=1$. A minimal marked structure has at most $2s+1$ points, and so the number of minimal marked structures is $O((2s+1)!^{s-1})$. The theorem thus gives $\Theta(\fF) \cong \bZ^{n(s)}$ with $n(s) = O(2^{(2s+1)!^{s-1}})$; the upper bound here is roughly $\exp(\exp(s^2 \log(s))$. We showed that $n(1)=4$ in \cite{repst} and $n(2)=37$ in \cite{homoperm}.

(b) Let $\fF_s$ be the class of finite sets equipped with a total order and an $s$-coloring. This Fra\"iss\'e class has also received some attention in the literature, e.g., \cite{Agarwal, Laflamme}. Theorem~\ref{mainthm} again applies with $m=1$. A minimal marked structure has at most three points, and so the number of minimal marked structures is $O(s^3)$. Thus the theorem shows that $\Theta(\fF) \cong \bZ^{n(s)}$ with $n(s) = O(2^{s^3})$. In \cite{colored}, we showed that $n(s)=(2s+2)^s$.

(c) Let $\fF$ be the class of finite boron trees, i.e., trees of valence at most three. See \cite[\S 2.6]{CameronBook} or \cite[\S 17]{repst} for details on this Fra\"iss\'e class. Let $\fF_s$ be defined similarly, but where the leaves now carry an $s$-coloring. Theorem~\ref{mainthm} applies to $\fF_s$ with $m=6$. A minimal marked structure has at most five leaves, and so the number of minimal marked structures is $O(s^5)$. The theorem thus shows $\Theta(\fF)[1/6] \cong \bZ[1/6]^{n(s)}$ with $n(s)=O(5^{s^5})$. We showed that $n(1)=2$ in \cite{repst}.

\subsection{Idea of proof}

To prove Theorem~\ref{mainthm}, we work with oligomorphic groups instead of Fra\"iss\'e classes. Thus instead of $\fF$ we have an oligomorphic group $(G, \Omega)$, and instead of $\Theta(\fF)$ we have $\Theta(G, \Omega)$. In broad strokes, the proof has two steps.

\textit{Step 1.} We show that $\Theta(G, \Omega)[1/m]$ is a finitely generated binomial ring. (See \S \ref{ss:binom} for the definition of binomial ring.) Finite generation follows from the condition Theorem~\ref{mainthm}(a); this was a fundamental observation of \cite{thesis}. We proved that certain related rings were binomial in \cite[\S 5]{repst}. With some effort, we transfer that result to $\Theta(G, \Omega)[1/m]$; this relies on the condition Theorem~\ref{mainthm}(b). The structure theorem for finitely generated binomial rings, due to Ekedahl, now shows
\begin{displaymath}
\Theta(G, \Omega)[1/m] \cong \bZ[1/m_1] \times \cdots \times \bZ[1/m_n]
\end{displaymath}
for some $m_1, \ldots, m_n \in \bZ$. In particular, $(G, \Omega)$ has $n$ complex measures. At this point, however, we have no control on the $m_i$'s or on $n$.

\textit{Step 2.} The ring $\Theta(G, \Omega)[1/m]$ comes with a natural generating set, the \emph{basic classes}. We show that the binomial coefficients of these elements have certain special properties. Combined with some elementary number theoretic results about binomial coefficients, this allows us to control the $m_i$'s and $n$.

\subsection{Outline}

In \S \ref{s:oligo} we study the universal ring $\Theta$, and prove results of a general nature. In \S \ref{s:main}, we prove our main theorems. Finally, in \S \ref{s:struct}, we translate our results into the language of finite relational structures.

\subsection{Notation}

We list some of the important notation:
\begin{description}[align=right,labelwidth=2.25cm,leftmargin=!,font=\normalfont]
\item[ $G$ ] an oligomorphic group
\item[ $\Omega$ ] the set on which $G$ acts
\item[ $G(A)$ ] the subgroup of $G$ fixing each element of $A \subset \Omega$
\item[ {$G[A]$} ] the subgroup of $G$ fixing $A$ setwise
\item[ $\sE$ ] a stabilizer class for $G$
\item[ $\fF$ ] a Fra\"iss\'e class
\item[ $\Theta(-)$ ] the ring carrying the universal measure
\end{description}
There are also a few named hypotheses we refer to:
\begin{description}[align=right,labelwidth=2.25cm,leftmargin=!,font=\normalfont]
\item[$\IND(m)$] the index of $G(A)$ in $G[A]$ divides a power of $m$, for all finite $A \subset \Omega$ (\S \ref{ss:binom})
\item[$\GEN(m)$] $\Theta_{\le 1}(G, \Omega)[1/m]$ is a finitely generated $\bZ[1/m]$-module (\S \ref{ss:filt})
\item[$\AUT(m)$] $\# \Aut(X)$ divides a power of $m$, for all $X \in \fF$ (\S \ref{ss:aut})
\item[$\FMM$] $\fF$ has finitely many minimal marked structures (\S \ref{ss:marked})
\end{description}

\subsection*{Acknowledgments}

We thank Nate Harman for helpful discussions.

\section{A filtered binomial ring} \label{s:oligo}

\subsection{Overview}

In \S \ref{s:oligo}, we review some material on oligomorphic groups and measures from \cite{repst}, and also prove several new results. Everything we do in this section has a specific reason behind it, but since these reasons are not always immediately evident, we attempt to describe the big picture here. We freely use terms defined below in this discussion.

Let $(G, \Omega)$ be an oligomorphic group. A measure is a rule assigning a number to each map of $G$-sets, and there is a ring $\Theta(G)$ that carries the universal measure for $G$. It turns out that the category of all $G$-sets is too big for our purposes. We prefer to work with what we call $\sE(\Omega)$-smooth $G$-sets, which, in the transitive case, are those $G$-sets that occur as an orbit on a power of $\Omega$; here $\sE(\Omega)$ is an example of a stabilizer class. There is a notion of measure based on this subclass of $G$-sets, and there is again a universal ring $\Theta(G, \Omega)$.

Given a transitive $\sE(\Omega)$-smooth $G$-set, one can consider the minimal power of $\Omega$ in which it appears. It turns out that this idea leads to a filtration on the ring $\Theta(G, \Omega)$. This filtration plays a key role in the proof of our main theorem (see Proposition~\ref{prop:pi}), and is the reason we work with $\Theta(G, \Omega)$ instead of $\Theta(G)$.

In \cite[\S 5]{repst}, we proved the important theorem that $\Theta(G)$ is a binomial ring, and binomiality will be a primary component in the proof of our main theorem. Unfortunately, the ring $\Theta(G, \Omega)$ is not binomial in general: the problem now is that the class of $\sE(\Omega)$-smooth $G$-sets is too small! We show that one can enlarge $\sE(\Omega)$ by a reasonably small amount to obtain a stabilizer class $\sE^+(\Omega)$ for which binomiality holds. An important problem is to estimate the difference between the ring $\Theta(G, \Omega)$ that we care about, and the ring $\Theta(G, \sE^+(\Omega))$ that we know to be binomial. The key result in this direction is Proposition~\ref{prop:large}.

To summarize: the main point of this section is to obtain a version of the $\Theta$ ring that carries the filtration mentioned above and is also a binomial ring. Achieving this requires a careful analysis of different stabilizer classes, and how the associated $\Theta$ rings compare.

\subsection{Oligomorphic groups}

An \defn{oligomorphic group} is a permutation group $(G, \Omega)$ such that $G$ has finitely many orbits on $\Omega^n$ for all $n$. Given an oligomorphic group and a finite subset $A$ of $\Omega$, let $G(A)$ be the subgroup of $G$ fixing each element of $A$. These subgroups form a neighborhood basis of the identity for a topology on $G$. This topology is Hausdorff, non-archimedean (open subgroups form a neighborhood basis of the identity), and Roelcke pre-compact (if $U$ and $V$ are open subgroups then $U \backslash G/V$ is a finite set). A topological group with these three properties is called \defn{pro-oligomorphic}. As the name suggests, such a group is essentially an inverse limit of oligomorphic groups.

Let $G$ be a pro-oligomorphic group. We say an action of $G$ on a set $X$ is \defn{smooth} (resp.\ \defn{finitary}) if every element of $X$ has open stabilizer in $G$ (resp.\ if $X$ has finitely many orbits). We use the term ``$G$-set'' to mean ``set equipped with a smooth and finitary action of $G$.'' We write $\bS(G)$ for the category of $G$-sets. This category is closed under binary products and fiber products \cite[Proposition~2.8]{repst}. See \cite[\S 2]{repst} for further background on oligomorphic groups and $G$-sets.

\begin{example} \label{ex:sym}
Let $\Omega=\{1,2,\ldots\}$ and let $\fS$ be the group of all permutations on $\Omega$. Then $(\fS, \Omega)$ is an oligomorphic permutation group. Let $\fS(n)$ be the subgroup of $\fS$ fixing each of $1, \ldots, n$. These subgroups form a neighborhood basis of the identity. Any open subgroup of $\fS$ is conjugate to $H \times \fS(n)$ for some $n$ and some subgroup $H$ of the finite symmetric group $\fS_n$ \cite[Proposition~14.1]{repst}.
\end{example}

\subsection{Algebraic closure}

Suppose $(G, \Omega)$ is oligomorphic. Let $A$ be a finite subset of $\Omega$. The \defn{algebraic closure} of $A$, denoted $\acl(A)$, is the set of elements of $\Omega$ that have finite orbit under $G(A)$. The action of $G(A)$ on $\Omega$ is oligomorphic, and so, in particular, has finitely many orbits; since $\acl(A)$ is the union of all finite orbits of $G(A)$, it is a finite set. The operation $\acl$ is idempotent, that is, $\acl(\acl(A))=\acl(A)$.

We will require the following result on algebraic closure when we estimate the difference between the $\Theta$ rings for $\sE(\Omega)$ and $\sE^+(\Omega)$ in the proof of Theorem~\ref{thm:theta-binom}. We write $G[A]$ for the group of elements $g \in G$ that fix $A$ setwise, i.e., $gA=A$. We note that $G[A]$ contains $G(A)$ as a finite index normal subgroup.

\begin{proposition} \label{prop:acl}
Let $U$ be a subgroup of $G$ that contains $G(A)$ with finite index, for some finite subset $A$ of $\Omega$. Letting $B=\acl(A)$, we have
\begin{displaymath}
G(B) \subset G(A) \subset U \subset G[B].
\end{displaymath}
\end{proposition}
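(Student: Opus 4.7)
The plan is to verify each of the three inclusions in turn; all three are short, but the real content lies in $U \subset G[B]$.

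The first inclusion $G(B) \subset G(A)$ is immediate: since $A \subset \acl(A) = B$, any element of $G$ that fixes every point of $B$ a fortiori fixes every point of $A$. The second inclusion $G(A) \subset U$ is the hypothesis.

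For the main inclusion, I would fix $u \in U$ and $b \in B$ and show $ub \in B$, i.e., that the $G(A)$-orbit of $ub$ is finite. The key observation is that since $G(A)$ has finite index in $U$, one can write $U = \bigsqcup_{i=1}^k u_i G(A)$ for some coset representatives $u_1, \ldots, u_k$; then
\begin{displaymath}
U \cdot b \;=\; \bigcup_{i=1}^k u_i \cdot (G(A) \cdot b),
\end{displaymath}
which is a finite union of finite sets (each $G(A)$-orbit inside $\acl(A)$ being finite by definition of algebraic closure), hence finite. Since $u \in U$, we have $G(A) \cdot ub \subset U \cdot ub = U \cdot b$, so the $G(A)$-orbit of $ub$ is finite; this gives $ub \in \acl(A) = B$. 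Thus $uB \subset B$, and because $B$ is finite and $u$ acts injectively on $\Omega$, in fact $uB = B$, which is precisely $u \in G[B]$.

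The only conceptual step is the recognition that a finite-index overgroup of $G(A)$ cannot enlarge the orbits on $\acl(A)$ beyond finiteness; once this is in hand, the rest is formal bookkeeping, and I do not anticipate any real obstacle. The prior paragraph of the paper already records the two facts used, namely that $\acl(A)$ is finite and that $G(A)$ is normal of finite index in $G[A]$ (the latter is used only implicitly, to make the conclusion $u \in G[B]$ feel natural).
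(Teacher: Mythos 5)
Your proposal is correct and follows essentially the same argument as the paper: decompose $U$ into finitely many cosets of $G(A)$, observe that $U\cdot b$ is then a finite union of finite $G(A)$-orbits, and conclude that $G(A)\cdot ub \subset U\cdot b$ is finite so $ub\in B$. Your closing remark that $uB\subset B$ upgrades to $uB=B$ by finiteness and injectivity is a detail the paper leaves implicit, but otherwise the two proofs coincide.
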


\begin{proof}
Let $U=\bigsqcup_{i=1}^n h_i G(A)$ be the coset decomposition of $U$. Suppose $x \in B$ and $g \in U$. Then $Ugx=Ux=\bigcup_{i=1}^n h_i G(A) x$ is finite, since $G(A) x$ is finite. In particular, $G(A)gx$ is finite, and so $gx \in B$. We thus see that $U$ maps $B$ into itself, and so $U \subset G[B]$. Since $B$ contains $A$, we have $G(B) \subset G(A)$.
\end{proof}

\begin{corollary}
If $B$ is a finite subset of $\Omega$ that is algebraically closed then the normalizer of $G(B)$ in $G$ is $G[B]$.
\end{corollary}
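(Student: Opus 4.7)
The plan is to prove, for a finite algebraically closed $B \subseteq \Omega$, the two inclusions $G[B] \subseteq N$ and $N \subseteq G[B]$, where $N$ denotes the normalizer of $G(B)$ in $G$; together these give the desired equality.

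The first inclusion, $G[B] \subseteq N$, is essentially free: the paragraph immediately preceding Proposition~\ref{prop:acl} already records that $G(B)$ is a \emph{normal} subgroup of $G[B]$, which is precisely the statement that $G[B]$ normalizes $G(B)$.

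For $N \subseteq G[B]$, I would take $g \in N$ and aim to show $gB \subseteq B$; combined with the finiteness of $B$, this forces $gB = B$, i.e., $g \in G[B]$. To show $gb \in B$ for each $b \in B$, I would exploit the characterization of $B = \acl(B)$ as the set of elements of $\Omega$ with finite $G(B)$-orbit. In fact $gb$ is \emph{fixed} by every $h \in G(B)$: the normalizer hypothesis gives $g^{-1} h g \in G(B)$, which therefore fixes $b$, and so $h \cdot gb = g (g^{-1} h g) b = gb$. Hence the $G(B)$-orbit of $gb$ is the singleton $\{gb\}$, placing $gb$ in $\acl(B) = B$.

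One might instead try to derive the corollary as a formal consequence of Proposition~\ref{prop:acl} applied with $U = N$, but this would require knowing a priori that $[N : G(B)]$ is finite, and that finiteness is essentially equivalent to the very conclusion we want, so the direct argument above is the cleaner route. The only (rather mild) obstacle is recognizing that the normalizer hypothesis is precisely what permits replacing an arbitrary $h \in G(B)$ with the conjugate $g^{-1} h g \in G(B)$, and this substitution is what triggers the algebraic-closure trick.
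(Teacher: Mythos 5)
Your proof is correct, but it takes a genuinely different route from the paper's. The paper proves $N \subseteq G[B]$ by first showing that $[N : G(B)]$ is finite and then invoking Proposition~\ref{prop:acl} with $U = N$; the finiteness you worried about is not circular but is supplied by Roelcke pre-compactness: $N$ is open (it contains the open subgroup $G(B)$), hence pro-oligomorphic, and since $G(B)$ is normal in $N$ the finite double coset space $G(B) \backslash N / G(B)$ is exactly the quotient $N/G(B)$. So the ``formal consequence of Proposition~\ref{prop:acl}'' route you set aside is precisely the one the paper follows. Your direct element-wise argument --- for $g \in N$ and $b \in B$, every $h \in G(B)$ satisfies $h(gb) = g((g^{-1}hg)b) = gb$, so $gb$ has singleton $G(B)$-orbit and lies in $\acl(B) = B$, whence $gB = B$ by finiteness of $B$ --- is sound and arguably more elementary, since it bypasses both Roelcke pre-compactness and Proposition~\ref{prop:acl}. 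It even shows slightly more: $gb$ lands in the definable closure of $B$ (the fixed points of $G(B)$), so your argument only needs $B$ to be definably closed. What the paper's version buys in exchange is economy: it reuses Proposition~\ref{prop:acl}, which is needed anyway for the proof of Theorem~\ref{thm:theta-binom}, and records the finite-index fact about normalizers that the direct computation leaves implicit.
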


\begin{proof}
Let $U$ be the normalizer of $G(B)$. This contains $G(B)$ as a subgroup, and so $U$ is open. Since $G(B)$ is a normal subgroup of $U$, it has finite index; indeed, $U$ is pro-oligomorphic, and so $U/G(B)=G(B) \backslash U/G(B)$ is finite. The proposition thus implies $U \subset G[B]$. Since $G[B]$ normalizes $G(B)$, we also have $G[B] \subset U$, and so the result follows.
\end{proof}

\begin{remark}
We give an example to illustrate some of the above concepts. Let $\Gamma$ be a disjoint union of copies of the complete graph $K_n$ (for $n$ fixed), and let $G=\fS_n \wr \fS$ be its automorphism group. The group $G$ acts oligomorphically on the vertex set $\Omega$ of $\Gamma$. Let $A$ be a subset of the first component of $\Gamma$ of cardinality $r>0$. Then
\begin{displaymath}
G(A) \cong \fS_{n-r} \times (\fS_n \wr \fS(1)), \qquad
G[A] \cong (\fS_r \times \fS_{n-r}) \times (\fS_n \wr \fS(1))
\end{displaymath}
where $\fS(1) \subset \fS$ is the stabilizer of~1 (as in Example~\ref{ex:sym}). In this case, $G[A]$ is the normalizer of $G(A)$ in $G$. The algebraic closure $B$ of $A$ consists of all vertices in the first component. The groups $G(B)$ and $G[B]$ are as above, but with $r=n$. We also note that the \defn{definable closure} of $A$, which is defined to be the set of fixed points of $G(A)$, is $A$ if $r \ne n-1$, and coincides with the algebraic closure $B$ if $r=n-1$.
\end{remark}

\subsection{Stabilizer classes} \label{ss:stab}

Let $G$ be a pro-oligomorphic group. We will typically not want to work with all $G$-sets. The classes of sets we will use can be conveniently captured with using the following notion, introduced in \cite[\S 2.6]{repst}.

\begin{definition}
A \defn{stabilizer class} in $G$ is a collection $\sE$ of open subgroups of $G$ satisfying the following conditions:
\begin{enumerate}
\item $\sE$ contains $G$.
\item $\sE$ forms a neighborhood basis for the identity of $G$.
\item $\sE$ is closed under conjugation, i.e., $U \in \sE$ and $g \in G$ implies $gUg^{-1} \in \sE$.
\item $\sE$ is closed under finite intersections, i.e., $U,V \in \sE$ implies $U \cap V \in \sE$. \qedhere
\end{enumerate}
\end{definition}

Let $\sE$ be a stabilizer class. We say that a $G$-set $X$ is \defn{$\sE$-smooth} if the stabilizer of any element of $X$ belongs to $\sE$. We let $\bS(G, \sE)$ be the full subcategory of $\bS(G)$ spanned by the $\sE$-smooth sets. The category $\bS(G, \sE)$ is also closed under binary products and fiber products. An intrinsic characterization of these categories is given in \cite{pregalois}.

Suppose $(G, \Omega)$ is oligomorphic. We let $\sE(\Omega)$ be the collection of all open subgroups of the form $G(A)$, with $A$ a finite subset of $\Omega$; alternatively, this can be described as the set of subgroups that occur as the stabilizer of some point on $\Omega^n$. This is a stabilizer class; indeed, axioms~(a) and~(b) are evident, while~(c) follows from $gG(A)g^{-1}=G(gA)$, and~(d) from $G(A) \cap G(B)=G(A \cup B)$. A transitive $G$-set is $\sE(\Omega)$-smooth if and only if it occurs as an orbit on $\Omega^n$ for some $n$. We write $\bS(G, \Omega)$ in place of $\bS(G, \sE(\Omega))$.

\begin{example}
For the oligomorphic group $(\fS, \Omega)$ from Example~\ref{ex:sym}, $\sE(\Omega)$ consists of those subgroups conjugate to some $\fS(n)$. 
\end{example}

Suppose $\sE$ is a stabilizer class. We define $\sE^+$ to be the set of open subgroups of $G$ that contain some member of $\sE$ with finite index.

\begin{proposition}
The collection $\sE^+$ is a stabilizer class.
\end{proposition}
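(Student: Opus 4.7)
The plan is to verify the four defining axioms of a stabilizer class for $\sE^+$ in turn. Axioms (a) and (b) are essentially immediate: $G \in \sE^+$ because $G$ contains itself with index one and $G \in \sE$, while every $V \in \sE$ contains itself with index one, so $\sE \subset \sE^+$, and this inclusion transfers the neighborhood basis property to $\sE^+$. Axiom (c) is handled by conjugating a witness: if $U \supset V$ with $V \in \sE$ and $[U:V]$ finite, then $gUg^{-1} \supset gVg^{-1}$ with the same index, and $gVg^{-1} \in \sE$ by axiom (c) for $\sE$.

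The only step with any real content is axiom (d). Given $U_1, U_2 \in \sE^+$ with witnesses $V_1, V_2 \in \sE$ of finite index, I would propose $V_1 \cap V_2$ as the witness inside $U_1 \cap U_2$; it lies in $\sE$ by axiom (d) for $\sE$, and it is certainly contained in $U_1 \cap U_2$. To bound the index, I would consider the coset map
\[
\pi \colon (U_1 \cap U_2)/(V_1 \cap V_2) \longrightarrow U_1/V_1 \times U_2/V_2, \qquad g(V_1 \cap V_2) \longmapsto (gV_1, gV_2).
\]
This is well-defined, and one checks that $\pi$ is injective: if $gV_1 = hV_1$ and $gV_2 = hV_2$, then $h^{-1}g \in V_1 \cap V_2$. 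Hence $[U_1 \cap U_2 : V_1 \cap V_2] \le [U_1:V_1]\cdot[U_2:V_2] < \infty$, and $U_1 \cap U_2 \in \sE^+$. Openness of $U_1 \cap U_2$ is automatic from openness of $U_1$ and $U_2$.

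I do not expect any serious obstacle; the entire argument is elementary topological group theory. The only point that demands a moment of attention is the index estimate in axiom (d), which the coset-injection above resolves immediately.
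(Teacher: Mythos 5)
Your proof is correct and follows essentially the same route as the paper: axioms (a)--(c) are immediate, and for (d) one intersects the two witnesses from $\sE$ and bounds the index of $V_1 \cap V_2$ in $U_1 \cap U_2$. The paper simply asserts the finiteness of this index; your coset-injection argument is the standard justification it leaves implicit.
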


\begin{proof}
It is clear that $\sE^+$ satisfies conditions (a), (b), and (c). We now verify (d). Thus suppose that $U$ and $V$ belong to $\sE^+$. By definition, there are $U_0, V_0 \in \sE$ such that $U$ contains $U_0$ with finite index, and similarly for $V$. Thus $U \cap V$ contains $U_0 \cap V_0$ with finite index. As $U_0 \cap V_0$ belongs to $\sE$, it follows that $U \cap V$ belongs to $\sE^+$, which completes the proof.
\end{proof}

The stabilizer class $\sE^+$ has a natural interpretation from the point of view of $G$-sets: a transitive $G$-set $X$ is $\sE^+$-smooth if and only if there is a finite-to-one map $Y \to X$ with $Y$ a transitive $\sE$-smooth set. In particular, we see that $\bS(G, \sE^+)$ is closed under quotients by finite groups, which is an important property in the analysis of measures (\S \ref{ss:binom}). If $G$ is oligomorphic and $\sE=\sE(\Omega)$ then Proposition~\ref{prop:acl} gives a stronger result: a transitive $G$-set $X$ is $\sE^+$-smooth if and only if $X=Y/\Gamma$ for some transitive $\sE$-smooth $G$-set $Y$ and subgroup $\Gamma$ of $\Aut_G(Y)$.

\subsection{Measures} \label{ss:meas}

Let $G$ be a pro-oligomorphic group and let $\sE$ be a stabilizer class for $G$. The following definition introduces the central objects of study in this paper:

\begin{definition} \label{defn:meas}
A \defn{measure} for $(G, \sE)$ valued in a commutative ring $k$ is a rule that assigns to each map $f \colon Y \to X$ of $\sE$-smooth $G$-sets, with $X$ transitive, a quantity $\mu(f)$ in $k$ such that the following conditions hold:
\begin{enumerate}
\item If $f$ is an isomorphism then $\mu(f)=1$.
\item Suppose $f \colon Y \to X$ is as above and $Y=Y_1 \sqcup Y_2$. Letting $f_i$ be the restriction of $f$ to $Y_i$, we have $\mu(f) = \mu(f_1) + \mu(f_2)$.
\item For composable maps $g \colon Z \to Y$ and $f \colon Y \to X$ of transitive $\sE$-smooth $G$-sets, we have $\mu(f \circ g)=\mu(f) \cdot \mu(g)$.
\item Let $f \colon Y \to X$ and $g \colon X' \to X$ be maps of $\sE$-smooth $G$-sets, with $X$ and $X'$ transitive, and let $f' \colon Y' \to X'$ be the base change of $f$. Then $\mu(f) = \mu(f')$.
\end{enumerate}
This definition is based on \cite[Definition~3.12]{repst}. It is equivalent to \cite[Definition~3.1]{repst} by \cite[Proposition~3.13]{repst}.
\end{definition}

Define a ring $\Theta(G, \sE)$ as follows. For each map $f \colon Y \to X$ of $\sE$-smooth $G$-sets, with $X$ transitive, there is a class $[f]$ in $\Theta(G, \sE)$. These classes satisfy analogs of (a)--(d) above. To be more precise, $\Theta(G, \sE)=P/I$, where $P$ is the polynomial ring in the symbols $[f]$ and $I$ is the ideal generated by the relations (a)--(d). There is a measure $\mu_{\rm univ}$ valued in $\Theta(G, \sE)$ defined by $\mu_{\rm univ}(f)=[f]$. This measure is universal, in the sense that if $\mu$ is a measure valued in a ring $k$ then there is a unique ring homomorphism $\phi \colon \Theta(G, \sE) \to k$ such that $\mu=\phi \circ \mu_{\rm univ}$. Thus the problem of determining all measures amounts to computing the ring $\Theta(G, \sE)$.

There are two shorthands we employ. When $\sE$ is the collection of all open subgroups of $G$, we write $\Theta(G)$ instead of $\Theta(G, \sE)$, and simply say ``measure for $G$.'' When $(G, \Omega)$ is oligomorphic, we write $\Theta(G, \Omega)$ instead of $\Theta(G, \sE(\Omega))$, and say ``measure for $(G, \Omega)$.''

\begin{example}
Consider the infinite symmetric group $(\fS, \Omega)$ from Example~\ref{ex:sym}. Then $\Theta(\fS, \Omega)=\bZ[t]$ while $\Theta(\fS)=\bZ\langle t \rangle$ is the ring of integer-valued polynomials (see \cite[Proposition~14.15]{repst} and \cite[Theorem~14.4]{repst}). In each case, the class $[\Omega \to \bone]$ corresponds to $t$, where $\bone$ is the one-point $\fS$-set.

Let $\Omega^{[n]}$ be the complement of the large diagonal in $\Omega^n$ and let $\Omega^{(n)}$ denote set of $n$-element subsets of $\Omega$. Both these sets are smooth, but $\Omega^{(n)}$ is not $\sE(\Omega)$-smooth. Indeed, $\Omega^{(n)}$ is the quotient of $\Omega^{[n]}$ by its natural $\fS_n$-action; the stabilizers on $\Omega^{(n)}$ are conjugate to $\fS_n \times \fS(n)$, which is not present in the class $\sE(\Omega)$.  The class $[\Omega^{[n]} \to \bone]$ in $\Theta(\fS)$ is equal to $t(t-1)\dots(t-n+1)$, while $[\Omega^{(n)} \to \bone]$ equals $\binom{t}{n}$, the quotient of the former by $\# \fS_n$. See Proposition~\ref{prop:bcoeff} for details of the computations.
\end{example}

\subsection{Change of stabilizer class} \label{ss:chgstb}

Let $G$ be a pro-oligomorphic group and let $\sE \subset \sF$ be stabilizer classes for $G$. One easily sees that a measure for $(G, \sF)$ restricts to a measure for $(G, \sE)$. Dually, there is a natural ring homomorphism
\begin{displaymath}
\Theta(G, \sE) \to \Theta(G, \sF).
\end{displaymath}
We say that $\sE$ is \defn{large} in $\sF$ if each subgroup in $\sF$ contains a subgroup in $\sE$ with finite index. In this case, the above map is an isomorphism after tensoring up to $\bQ$. This statement was asserted in \cite[\S 3.4(d)]{repst}, though the proof was omitted. We will require a stronger statement; we include a complete proof, as this result is crucial to this paper.

Let $S$ be an arbitrary set of prime numbers. We say that $\sE \subset \sF$ is \defn{$S$-large} if for every $V \in \sF$ there is $U \in \sE$ such that $V$ contains $U$ with finite index, and every prime factor of $[V:U]$ belongs to $S$. We write $M[1/S]$ for the localization of a $\bZ$-module $M$ at the multicative set generated by $S$. The following is the result we require; it recovers \cite[\S 3.4(d)]{repst} when $S$ is the set of all prime numbers.

\begin{proposition} \label{prop:large}
If $\sE \subset \sF$ be $S$-large then $\Theta(G, \sE)[1/S] = \Theta(G, \sF)[1/S]$.
\end{proposition}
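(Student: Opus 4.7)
The plan is to construct an explicit two-sided inverse to the natural localized map $\pi \colon \Theta(G, \sE)[1/S] \to \Theta(G, \sF)[1/S]$. The key mechanism is that $S$-largeness lets us dominate every $\sF$-smooth transitive $G$-set by an $\sE$-smooth one via a cover whose degree is invertible after inverting $S$. A technical lemma underlying everything is that for a finite-to-one map of transitive $G$-sets $q \colon G/U \to G/H$ (with $U \subset H$ of finite index), one has $[q] = [H:U]$ in the relevant $\Theta$-ring. I would establish this first in the Galois case ($U$ normal in $H$) by applying base change (d) to $q$ along itself and observing that $(G/U)\times_{G/H}(G/U) \cong \bigsqcup_{i=1}^{[H:U]} G/U$ as $G$-sets with the first projection restricting to the identity on each piece, so $[q] = [H:U]$ by (a) and (b); and then reducing the general case to the Galois case by inserting the core $N = \operatorname{core}_H(U)$ and using multiplicativity (c).

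Next I would define a candidate inverse $\rho$ as follows. Given a generator $[f \colon Y \to X]$ of $\Theta(G, \sF)$ with $X = G/V$ transitive, use $S$-largeness to pick $U_V \in \sE$ with $U_V \subset V$ of $S$-index, form the cover $p \colon G/U_V \to X$, and base-change to $f' \colon Y' \to G/U_V$. Decompose $Y'$ into $G$-orbits $Y'_j$ with stabilizers $W'_j \in \sF$, and for each $j$ pick $U_j \in \sE$ inside $W'_j$ of $S$-index $n_j$, giving covers $q_j \colon G/U_j \to Y'_j$. The composites $f'_j \circ q_j \colon G/U_j \to G/U_V$ are then maps of $\sE$-smooth sets, and I set
\[
\rho([f]) \;=\; \sum_j \frac{[f'_j \circ q_j]}{n_j} \;\in\; \Theta(G, \sE)[1/S],
\]
the denominators being units since each $n_j$ has all prime factors in $S$. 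Well-definedness with respect to the choices of $U_V$ and of the $U_j$ reduces, by taking common refinements inside $\sE$ (available by axioms (c) and (d) of a stabilizer class, together with $S$-largeness of the resulting indices), to the case where one choice refines another; the technical lemma on $[q]$ then makes the two formulas agree via property (c) in $\Theta(G, \sE)$.

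Once $\rho$ is well defined on generators, I would check it respects the four relations defining $\Theta(G, \sF)$; axioms (a), (b), and (d) are essentially formal since the recipe commutes with orbit decomposition and with pullback along maps of transitive sets, while multiplicativity (c) is the delicate point—given composable transitive maps $Z \to Y \to X$, one must choose mutually compatible covers on all three levels so that the extracted $\sE$-smooth composites match, which ultimately boils down to (c) in $\Theta(G, \sE)$ and multiplicativity of degrees of covers. With $\rho$ constructed, the composite $\pi \circ \rho$ sends $[f] \mapsto \sum_j [f'_j \circ q_j]/n_j = \sum_j [f'_j] = [f'] = [f]$ using the lemma, (c), and the base-change axiom (d) in $\Theta(G, \sF)$; and the composite $\rho \circ \pi$ acts as the identity on any generator $[g]$ coming from an $\sE$-smooth map, as one sees by taking the trivial choices $U_V = V$ and $U_j = W'_j$. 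The main obstacle throughout is the multiplicativity check for $\rho$ and, tied to it, verifying that refining the auxiliary choices does not change the formula; both rest on the technical fact that $[q]$ equals the degree for finite covers of transitive $G$-sets, which is the real heart of the argument.
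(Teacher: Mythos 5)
Your overall plan is the paper's: construct a measure (equivalently, a ring homomorphism) in the reverse direction by pushing $\sF$-smooth data to $\sE$-smooth data via $S$-covers and dividing by the degree, then verify the measure axioms. The paper organizes this as a two-stage construction --- a partially defined measure $\mu_1(f)$ for $f \colon Y \to X$ with $X$ transitive $\sE$-smooth and $Y$ merely $\sF$-smooth, followed by the genuine measure $\mu$ obtained by base-changing the target into $\sE$; your $\rho$ collapses the two stages, but is substantively the same recipe. The point you flag as delicate, multiplicativity, is indeed where the paper spends the most effort, and you have left it (along with the other axiom checks) as a sketch rather than a proof.

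There is also a concrete error in your proposed well-definedness argument. You assert that two choices $U_V^{(1)}, U_V^{(2)} \in \sE$ of $S$-smooth index in $V$ admit a common refinement in $\sE$ whose index in $V$ is again $S$-smooth. This is false: with $V = \fS_4$ and $S = \{2\}$, take $U_1, U_2$ two point-stabilizer copies of $\fS_3$; each has index $4$, but $U_1 \cap U_2$ has index $12$. Nor need any single orbit on $(G/U_1) \times_{G/V} (G/U_2)$ have $S$-smooth degree over $G/V$; only the whole (generally intransitive) fiber product does. The fix, and what the paper actually proves, is that the stage-one construction is invariant under \emph{arbitrary} base change $X' \to X$ between transitive $\sE$-smooth targets --- no $S$-smoothness hypothesis on that map at all --- after which independence of the choice of $\sE$-cover of the target follows by passing through any orbit of $X_1 \times_X X_2$. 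The $S$-smoothness is needed only for the cover of the source $Y$, not for refinements of the target. Two smaller issues: the sublemma $[q] = \delta(q)$ for finite covers is simply cited from the earlier paper, and your proposed proof in the non-Galois case requires cancelling the integer $[U:N]$ in $\Theta(G,\sE)$, which you have not justified; and your check that $\rho \circ \pi = \id$ on $\sE$-smooth generators uses well-definedness of $\rho$, so it cannot precede that verification.
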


The proof will take the remainder of \S \ref{ss:chgstb}. For a map of sets $f \colon Y \to X$, we write $\delta(f)=n$ if $f$ is everywhere $n$-to-1, i.e., $f^{-1}(x)$ has cardinality $n$ for all $x \in X$ (and $X$ is non-empty). If $f$ and $g$ are composable and $\delta(f)$ and $\delta(g)$ are defined then $\delta(fg)=\delta(f) \delta(g)$. Also, if $\delta(f)$ is defined and $f'$ is a base change of $f$ then $\delta(f')=\delta(f)$. We say that a positive integer is \defn{$S$-smooth} if all of its prime factors belong to $S$, and we say that $f$ is an \defn{$S$-cover} if $\delta(f)$ is defined and $S$-smooth. If $f$ is a map of $\sE$-smooth $G$-sets and $\delta(f)$ is defined then $[f]=\delta(f)$ in $\Theta(G, \sE)$ by \cite[\S 3.4(b)]{repst}.

\begin{lemma}
Let $X$ be an $\sF$-smooth $G$-set. Then there is an $S$-cover $Y \to X$ where $Y$ is an $\sE$-smooth $G$-set (and the map is $G$-equivariant).
\end{lemma}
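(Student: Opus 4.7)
The plan is to build $Y$ orbit by orbit and then balance the fiber sizes so that the resulting map has a single well-defined degree. First I would reduce to the transitive case: since $X$ is finitary, write $X = \bigsqcup_{i=1}^k X_i$ with each $X_i$ a $G$-orbit, and pick a basepoint $x_i \in X_i$ whose stabilizer $V_i$ lies in $\sF$, so $X_i \cong G/V_i$. It suffices to construct, for each $i$, an $\sE$-smooth $G$-set $Y_i$ equipped with an $S$-cover $Y_i \to X_i$ of some $S$-smooth degree $d_i$.

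Next, applying the $S$-largeness hypothesis to $V_i \in \sF$, choose $U_i \in \sE$ with $U_i \subset V_i$ of finite index $d_i = [V_i : U_i]$, where every prime dividing $d_i$ lies in $S$. The canonical quotient map $G/U_i \to G/V_i$ is $G$-equivariant and everywhere $d_i$-to-$1$, so it is an $S$-cover. Moreover, the stabilizers on $G/U_i$ are the conjugates $gU_ig^{-1}$, which belong to $\sE$ by closure under conjugation; hence $G/U_i$ is $\sE$-smooth.

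The only remaining issue is that the degrees $d_i$ may differ across orbits, whereas $\delta(f)$ being defined requires a uniform fiber cardinality. To fix this, set $d = \mathrm{lcm}(d_1, \ldots, d_k)$, which is again $S$-smooth since $S$-smoothness is preserved under lcms. For each $i$, let $Y_i$ be the disjoint union of $d/d_i$ copies of $G/U_i$, each mapping to $X_i$ via the map above; then $Y_i \to X_i$ is everywhere $d$-to-$1$. Taking $Y = \bigsqcup_{i=1}^k Y_i$, the disjoint union of these maps assembles into a $G$-equivariant map $Y \to X$ with $\delta(Y \to X) = d$ an $S$-smooth integer. Since $Y$ is still a finite disjoint union of transitive $\sE$-smooth $G$-sets, it is $\sE$-smooth, which completes the construction.

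The step that requires any care at all is the balancing in the last paragraph; the rest is a direct unpacking of $S$-largeness and the definition of a stabilizer class. There is no real obstacle here, only the bookkeeping to ensure that $\delta$ is defined globally rather than merely on each orbit.
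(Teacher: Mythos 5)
Your proof is correct and follows essentially the same route as the paper: reduce to the transitive case via $S$-largeness to get $G/U_i \to G/V_i$ with $\sE$-smooth source, then balance the degrees across orbits by taking $d = \mathrm{lcm}(d_i)$ and duplicating each $Y_i$ the appropriate number of times. Nothing to add.
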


\begin{proof}
First suppose $X=G/U$ is transitive, with $U \in \sF$. Let $V \in \sE$ be a subgroup contained in $U$ such that $[U:V]$ is finite and $S$-smooth. Then we can take $Y=G/V$. We now treat the general case. Let $X=X_1 \sqcup \cdots \sqcup X_n$ be the decomposition of $X$ into orbits. For each $i$, choose an $S$-cover $Y_i \to X_i$ where $Y_i$ is $\sE$-smooth. Let $d_i=\delta(Y_i \to X_i)$, and let $d$ be the least common multiple of the $d_i$'s. We can then take $Y=Y_1^{\sqcup d/d_1} \sqcup \cdots \sqcup Y_n^{\sqcup d/d_n}$. The natural map $Y \to X$ is an $S$-cover with $\delta(Y \to X)=d$.
\end{proof}

We now construct a partially defined measure $\mu_1$ for $(G, \sF)$ valued in $\Theta(G, \sE)[1/S]$. This measure will be defined for maps $f \colon Y \to X$ where $X$ is $\sE$-smooth and transitive, and $Y$ is $\sF$-smooth. To define it, we choose an $S$-cover $h \colon Y_1 \to Y$, with $Y_1$ an $\sE$-smooth $G$-set, and put $\mu_1(f)=\delta(h)^{-1} [fh]$.  In what follows $[-]$ always means the class of a map in $\Theta(G, \sE)$. The main properties of this construction are summarized below.

\begin{lemma} \label{lem:large-2}
The construction $\mu_1$ satisfies the following properties. In what follows, let $f \colon Y \to X$ be a map where $X$ is transitive and $\sE$-smooth, and $Y$ is $\sF$-smooth.
\begin{enumerate}
\item $\mu_1$ is well-defined.
\item $\mu_1$ is additive, i.e., it satisfies a version of Definition~\ref{defn:meas}(b).
\item $\mu_1$ is compatible with finite covers: if $g \colon Z \to Y$ is a map where $Z$ is $\sF$-smooth and $\delta(g)$ is defined then $\mu_1(fg)=\delta(g) \mu_1(f)$.
\item $\mu_1$ is compatible with composition: if $Y$ is transitive and $\sE$-smooth and $g \colon Z \to Y$ is a map with $Z$ an $\sF$-smooth $G$-set then $\mu_1(fg)=\mu_1(f) \mu_1(g)$.
\item $\mu_1$ is compatible with base change: if $X' \to X$ is a map of transitive $\sE$-smooth $G$-sets and $f' \colon Y' \to X'$ is the base change of $f$ then $\mu_1(f)=\mu_1(f')$.
\end{enumerate}
\end{lemma}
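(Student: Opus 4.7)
My plan is to observe that parts (b)--(e) will follow more or less mechanically once (a) is established, since each of them amounts to choosing an $S$-cover that realizes both sides of the claimed identity simultaneously. The key technical tools throughout are the closure of the $\sE$-smooth category under fiber products, the fact that base change preserves $\delta$ and that composition multiplies it, and the relation $[h]=\delta(h)$ in $\Theta(G,\sE)$ for a finite-to-one map $h$ of $\sE$-smooth $G$-sets (which, when the target is not transitive, one derives by orbit-decomposing the target and using additivity plus Definition~\ref{defn:meas}(c)).

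For well-definedness (a), given two $\sE$-smooth $S$-covers $h_i\colon Y_i'\to Y$ for $i=1,2$, I would form $Z=Y_1'\times_Y Y_2'$. This is $\sE$-smooth by the closure of $\bS(G,\sE)$ under fiber products, and the projection $p_i\colon Z\to Y_i'$ is a base change of $h_{3-i}$, hence everywhere $\delta(h_{3-i})$-to-one. The two compositions $fh_1 p_1$ and $fh_2 p_2$ are literally the same map $Z\to X$, so the desired equality reduces to computing $[fh_ip_i]$ in $\Theta(G,\sE)$. Decomposing $Z$ into $G$-orbits and applying Definition~\ref{defn:meas}(b)--(c) together with $[p_i]=\delta(p_i)=\delta(h_{3-i})$ gives $[fh_ip_i]=\delta(h_{3-i})\cdot[fh_i]$, so $\delta(h_2)[fh_1]=\delta(h_1)[fh_2]$; inverting $S$ yields $\delta(h_1)^{-1}[fh_1]=\delta(h_2)^{-1}[fh_2]$.

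For (b), given $Y=Y_1\sqcup Y_2$, I would choose $\sE$-smooth $S$-covers $h_i\colon Y_i'\to Y_i$ with $\delta(h_i)=d_i$, set $d=\mathrm{lcm}(d_1,d_2)$, and form the common $d$-to-one $\sE$-smooth cover $Y_1'^{\sqcup d/d_1}\sqcup Y_2'^{\sqcup d/d_2}\to Y$; additivity of $[-]$ in $\Theta(G,\sE)$ then transfers immediately to $\mu_1$. For (c), an $\sE$-smooth $S$-cover $h\colon Z'\to Z$ composed with $g$ is again an $\sE$-smooth $S$-cover, with $\delta(gh)=\delta(g)\delta(h)$, and comparing the defining formulas gives the factor $\delta(g)$. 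For (d), an $\sE$-smooth $S$-cover $h\colon Z'\to Z$ presents both $\mu_1(fg)$ and $\mu_1(g)$, and the needed identity $[fgh]=[f]\cdot[gh]$ in $\Theta(G,\sE)$ follows from Definition~\ref{defn:meas}(c) after decomposing $Z'$ into $G$-orbits and summing; here one also uses that $\mu_1(f)=[f]$ because $Y$ is already $\sE$-smooth (take $h=\mathrm{id}_Y$ and invoke (a)). For (e), the fiber product $Y_1'\times_Y Y'=Y_1'\times_X X'$ is $\sE$-smooth and delivers an $S$-cover of $Y'$ with the same $\delta(h)$, and Definition~\ref{defn:meas}(d) inside $\Theta(G,\sE)$ gives $[fh]=[f'h']$, so $\mu_1(f)=\mu_1(f')$.

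The main obstacle is (a): making rigorous the appeal to multiplicativity when the intermediate sets are not transitive (an orbit decomposition plus additivity is needed), and bookkeeping the $\delta$'s carefully through the fiber product. Everything else is a direct consequence of (a) combined with a judicious choice of $\sE$-smooth $S$-cover.
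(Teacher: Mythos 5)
Your arguments for (a), (b), (d), and (e) are essentially the paper's own: the fiber product of two $S$-covers for well-definedness, assembling covers of the pieces for additivity, the factorization $fgh=f\circ(gh)$ (after orbit-decomposing the $\sE$-smooth cover) for (d), and base-changing the chosen $S$-cover for (e). Those parts are fine.

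There is, however, a genuine gap in (c). You assert that the composite $g\circ h\colon Z'\to Y$ is ``again an $\sE$-smooth $S$-cover'' and then feed it into the defining formula for $\mu_1(f)$. But the hypothesis of (c) is only that $\delta(g)$ is \emph{defined}, not that it is $S$-smooth; if $\delta(g)$ is divisible by a prime outside $S$, then $\delta(gh)=\delta(g)\delta(h)$ is not $S$-smooth, so $gh$ is not an $S$-cover and cannot be used to evaluate $\mu_1(f)$ (nor does your part (a) cover this, since it only compares two $S$-covers). Two repairs are available. One is to strengthen (a) to: $[fk]=\delta(k)\,\mu_1(f)$ for \emph{any} finite $\sE$-smooth cover $k$ of $Y$ --- the same fiber-product comparison against an auxiliary $S$-cover works, because only the auxiliary cover's degree ever needs to be inverted. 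The other is the paper's route: base change $g$ along an $S$-cover $h\colon Y_1\to Y$ of the target to get $g_1\colon Z_1\to Y_1$ with $\delta(g_1)=\delta(g)$, take a further $\sE$-smooth $S$-cover $h''\colon Z_2\to Z_1$, and compute $[fgh'h'']=[fh]\cdot[g_1h'']=[fh]\cdot\delta(g)\delta(h'')$, using that any finite-to-one map of $\sE$-smooth $G$-sets has class equal to its degree regardless of $S$-smoothness. Either fix closes the gap; as written, your one-line argument for (c) only proves the special case where $g$ is itself an $S$-cover.
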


\begin{proof}
(a) Suppose $h' \colon Y_2 \to Y$ is a second $S$-cover with $Y_2$ an $\sE$-smooth $G$-set. Let $Y_3=Y_1 \times_Y Y_2$; this is a union of orbits on $Y_1 \times Y_2$, and thus $\sE$-smooth. We have $\delta(Y_3 \to Y_1)=\delta(Y_2 \to Y)$, and so the former is $S$-smooth. We have
\begin{displaymath}
\delta(h)^{-1} \cdot [fh] = \delta(Y_3 \to Y)^{-1} \cdot [Y_3 \to X]
\end{displaymath}
in $\Theta(G,\sE)[1/S]$. A similar computation shows that $\delta(h')^{-1} \cdot [fh']$ is equal to the same, and so the result follows.

(b) This is clear from the definition.

(c) Choose a cartesian diagram
\begin{displaymath}
\xymatrix{
Z_1 \ar[r]^{h'} \ar[d]_{g_1} & Z \ar[d]^g \\
Y_1 \ar[r]^h & Y }
\end{displaymath}
where $h$ is an $S$-cover and $Y_1$ is $\sE$-smooth. Note that $\delta(g_1)=\delta(g)$ and $\delta(h)=\delta(h')$. Also choose an $S$-cover $h'' \colon Z_2 \to Z_1$ where $Z_2$ is $\sE$-smooth. Note that $h' \circ h''$ is also an $S$-cover. We have
\begin{displaymath}
\mu_1(fg) = \delta(h'h'')^{-1} [fgh'h''] = \delta(h)^{-1} \delta(h'')^{-1} [fhg_1h''] = \delta(h)^{-1} \delta(g) [fh] = \delta(g) \mu_1(f).
\end{displaymath}
In the first step, we used the definition of $\mu_1$; in the second, the commutativity of the diagram and properties of $\delta$; in the third, the factorization
\begin{displaymath}
fhg_1h''=(fh) \circ (g_1h'')
\end{displaymath}
in $\bS(G, \sE)$, together with the computation
\begin{displaymath}
[g_1h'']=\delta(g_1h'') = \delta(g) \delta(h'');
\end{displaymath}
and in the fourth step, the definition of $\mu_1$ again. The result follows.

(d) Let $h \colon Z_1 \to Z$ be an $S$-cover with $Z_1$ an $\sE$-smooth $G$-set. Then
\begin{displaymath}
\mu_1(fg) = \delta(h)^{-1} [fgh] = \delta(h)^{-1} [f] [gh] = \mu_1(f) \mu_1(g).
\end{displaymath}
In the first step, we used the definition of $\mu_1$; in the second, the factorization $fgh=f \circ (gh)$ in $\bS(G, \sE)$; and in the third, the definition of $\mu_1$ again.

(e) It suffices to treat the case where $Y$ is transitive. Let $h \colon Y_1 \to Y$ an $S$-cover with $Y_1$ transitive and $\sE$-smooth, and let $h' \colon Y_1' \to Y'$ be its base change. Note that $h'$ is an $S$-cover and $\delta(h')=\delta(h)$. We have
\begin{displaymath}
\mu_1(f') = \delta(h)^{-1} [f'h'] = \delta(h)^{-1} [fh] = \mu_1(f).
\end{displaymath}
In the first step, we used the definition of $\mu_1$; in the second, that $f'h'$ is a base change of $fh$ in the category $\bS(G, \sE)$; and in the third, the definition of $\mu_1$ again.
\end{proof}

We now define an actual measure $\mu$ for $(G, \sF)$ valued in $\Theta(G, \sE)[1/S]$. Thus suppose $f \colon Y \to X$ is given, where $X$ and $Y$ are $\sF$-smooth and $X$ is transitive. Choose a map $X_1 \to X$ with $X_1$ transitive and $\sE$-smooth, and let $f_1 \colon Y_1 \to X_1$ be the base change of $f$. We define $\mu(f)=\mu_1(f_1)$. It is clear that $\mu$ satisfies Definition~\ref{defn:meas}(a,b).

\begin{lemma} \label{lem:large-4}
$\mu$ is well-defined and compatible with base change, i.e., Definition~\ref{defn:meas}(d) holds.
\end{lemma}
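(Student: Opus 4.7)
The plan is to leverage Lemma~\ref{lem:large-2}(e), which gives base-change compatibility of the partially defined $\mu_1$ along maps between transitive $\sE$-smooth sets. The key gadget in both parts is the production, for any pair of $\sE$-smooth coverings of a common $\sF$-smooth base, of a third transitive $\sE$-smooth covering that dominates both; this works because fiber products of $\sE$-smooth $G$-sets are again $\sE$-smooth (axiom (d) of stabilizer classes), and one then passes to a single $G$-orbit.

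For well-definedness I would proceed as follows. Let $X_1 \to X$ and $X_2 \to X$ be two choices of transitive $\sE$-smooth coverings, and let $f_i \colon Y_i \to X_i$ be the corresponding base changes of $f$. Choose any $G$-orbit $X_3 \subseteq X_1 \times_X X_2$; this is transitive and $\sE$-smooth, and its two projections give maps $X_3 \to X_1$ and $X_3 \to X_2$ of transitive $\sE$-smooth $G$-sets. By associativity of fiber product, the base change $f_3 \colon Y_3 \to X_3$ of $f$ along the composite $X_3 \to X$ agrees simultaneously with the base change of $f_1$ along $X_3 \to X_1$ and of $f_2$ along $X_3 \to X_2$. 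Two applications of Lemma~\ref{lem:large-2}(e) then yield $\mu_1(f_1) = \mu_1(f_3) = \mu_1(f_2)$, so $\mu(f)$ is independent of the chosen covering.

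For compatibility with base change, given $X' \to X$ with $X$ and $X'$ transitive $\sF$-smooth and $f' \colon Y' \to X'$ the base change of $f$, I would select transitive $\sE$-smooth coverings $X_1 \to X$ and $X_1' \to X'$ (their existence follows from the preceding $S$-cover lemma by passing to a $G$-orbit that surjects onto the transitive target). Composition gives a second covering $X_1' \to X$, so an orbit $X_2 \subseteq X_1 \times_X X_1'$ provides a transitive $\sE$-smooth $G$-set sitting over both $X_1$ and $X_1'$. Letting $f_2 \colon Y_2 \to X_2$ be the base change of $f$ along $X_2 \to X$, fiber-product associativity again identifies $f_2$ as the base change of both $f_1$ and $f_1'$, and Lemma~\ref{lem:large-2}(e) delivers
\[
\mu(f) = \mu_1(f_1) = \mu_1(f_2) = \mu_1(f_1') = \mu(f').
\]

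The entire argument is formal manipulation of fiber products once Lemma~\ref{lem:large-2}(e) is in hand, so I do not anticipate any serious obstacle beyond carefully tracking which base change is which.
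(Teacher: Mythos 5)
Your argument is correct and follows essentially the same route as the paper: well-definedness via an orbit on $X_1 \times_X X_2$ and two applications of Lemma~\ref{lem:large-2}(e), and base-change compatibility by producing a transitive $\sE$-smooth set dominating covers of both $X$ and $X'$. The only cosmetic difference is that the paper arranges a direct map $X_1' \to X_1$ and invokes Lemma~\ref{lem:large-2}(e) once, whereas you interpose a third orbit $X_2$ and invoke it twice; both are sound.
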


\begin{proof}
Suppose $X_2 \to X$ is a second map with $X_2$ transitive and $\sE$-smooth, and let $X_3$ be any $G$-orbit on $X_1 \times_X X_2$. Then, in the obvious notation, we have $\mu_1(f_1)=\mu_1(f_3)=\mu_1(f_2)$, where in both steps we use Lemma~\ref{lem:large-2}(e). Thus shows that $\mu$ is well-defined.

Now suppose that $X' \to X$ is a map of transitive $\sF$-smooth $G$-sets, and let $f' \colon Y' \to X'$ be the base change of $f$. Choose a commutative diagram
\begin{displaymath}
\xymatrix{
X'_1 \ar[r] \ar[d] & X_1 \ar[d] \\
X' \ar[r] & X }
\end{displaymath}
where $X_1$ and $X_1'$ are transitive and $\sE$-smooth; for example, first choose $X_1$, and then choose $X_1'$ by taking an appropriate cover of an orbit on the fiber product. In the obvious notation, we have $\mu(f)=\mu_1(f_1)$ and $\mu(f')=\mu_1(f'_1)$, by definition of $\mu$; we also have $\mu_1(f_1)=\mu_1(f_1')$ by Lemma~\ref{lem:large-2}(e). Thus the result follows.
\end{proof}

\begin{lemma} \label{lem:large-5}
$\mu$ is compatible with composition, i.e., Definition~\ref{defn:meas}(c) holds.
\end{lemma}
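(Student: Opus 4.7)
The plan is to prove $\mu(fg)=\mu(f)\mu(g)$ by reducing, via base change along an $\sE$-smooth cover of $X$ and a suitable choice of $\sE$-smooth transitive cover of each orbit of the fiber, to the analogous identity for $\mu_1$ proved in Lemma~\ref{lem:large-2}(d). Concretely, let $g\colon Z\to Y$ and $f\colon Y\to X$ be composable maps of transitive $\sF$-smooth $G$-sets. Choose a transitive $\sE$-smooth $G$-set $X_1$ with an equivariant map $X_1\to X$, and form the base changes $f_1\colon Y_1\to X_1$ and $g_1\colon Z_1\to Y_1$. By definition of $\mu$, we have $\mu(f)=\mu_1(f_1)$ and $\mu(fg)=\mu_1(f_1g_1)$.

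The main obstruction is that $Y_1$ is only $\sF$-smooth (and need not be transitive), so Lemma~\ref{lem:large-2}(d) does not apply to the pair $(f_1,g_1)$ directly. To get around this, decompose $Y_1=\bigsqcup_\alpha Y_1^\alpha$ into $G$-orbits, write $f_1^\alpha$, $g_1^\alpha$, $Z_1^\alpha$ for the corresponding restrictions, and for each $\alpha$ choose an $S$-cover $h^\alpha\colon Y_2^\alpha\to Y_1^\alpha$ with $Y_2^\alpha$ transitive and $\sE$-smooth (possible by the first lemma of \S\ref{ss:chgstb}). Let $k^\alpha\colon Z_2^\alpha\to Z_1^\alpha$ be the base change of $h^\alpha$ along $g_1^\alpha$, and set $f_2^\alpha=f_1^\alpha\circ h^\alpha$ and $g_2^\alpha\colon Z_2^\alpha\to Y_2^\alpha$ the base change of $g_1^\alpha$. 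Since pullbacks compose, $Z_2^\alpha=Z\times_Y Y_2^\alpha$, so $g_2^\alpha$ is the base change of $g$ along the transitive $\sE$-smooth cover $Y_2^\alpha\to Y_1^\alpha\to Y$; by the well-definedness established in Lemma~\ref{lem:large-4}, this gives $\mu(g)=\mu_1(g_2^\alpha)$ for every~$\alpha$.

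The key computation now takes place entirely within the framework of Lemma~\ref{lem:large-2}. Applying part~(c) of that lemma to $f_2^\alpha=f_1^\alpha\circ h^\alpha$ and to $f_2^\alpha g_2^\alpha=f_1^\alpha g_1^\alpha\circ k^\alpha$, and using $\delta(k^\alpha)=\delta(h^\alpha)$, yields
\begin{displaymath}
\mu_1(f_2^\alpha)=\delta(h^\alpha)\mu_1(f_1^\alpha),\qquad \mu_1(f_2^\alpha g_2^\alpha)=\delta(h^\alpha)\mu_1(f_1^\alpha g_1^\alpha).
\end{displaymath}
Since $Y_2^\alpha$ is transitive and $\sE$-smooth, part~(d) applies to give $\mu_1(f_2^\alpha g_2^\alpha)=\mu_1(f_2^\alpha)\mu_1(g_2^\alpha)$. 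Dividing by $\delta(h^\alpha)$, which is an $S$-smooth integer and hence invertible in $\Theta(G,\sE)[1/S]$, yields $\mu_1(f_1^\alpha g_1^\alpha)=\mu_1(f_1^\alpha)\cdot\mu(g)$. Summing over $\alpha$ with additivity (part~(b)) gives
\begin{displaymath}
\mu(fg)=\mu_1(f_1g_1)=\sum_\alpha \mu_1(f_1^\alpha g_1^\alpha)=\mu(g)\sum_\alpha\mu_1(f_1^\alpha)=\mu(g)\mu_1(f_1)=\mu(f)\mu(g),
\end{displaymath}
as required. The only nontrivial ingredient beyond Lemma~\ref{lem:large-2} is the bookkeeping of orbits of $Y_1$; once this is handled, the argument is a short diagram chase.
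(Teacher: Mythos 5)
Your proof is correct and follows essentially the same route as the paper: base change to a transitive $\sE$-smooth $X_1$, decompose $Y_1$ into $G$-orbits, and for each orbit run the cartesian-square computation with an $S$-cover, using Lemma~\ref{lem:large-2}(b,c,d) together with the base-change compatibility of Lemma~\ref{lem:large-4}. The only difference is organizational: the paper first isolates the special case where $X$ is $\sE$-smooth and then reduces the general case to it orbit by orbit, whereas you inline that cartesian-square argument directly into the per-orbit step.
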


\begin{proof}
Let $f \colon Y \to X$ and $g \colon Z \to Y$ be maps of transitive $\sF$-smooth $G$-sets. First suppose that $X$ is $\sE$-smooth. Consider a cartesian square
\begin{displaymath}
\xymatrix{
Z_1 \ar[r]^{h'} \ar[d]_{g_1} & Z \ar[d]^g \\
Y_1 \ar[r]^h & Y }
\end{displaymath}
where $h$ is an $S$-cover with $Y_1$ transitive and $\sE$-smooth. We have
\begin{align*}
\delta(h') \mu(fg) &= \delta(h') \mu_1(fg) = \mu_1(fgh') = \mu_1(fhg_1) \\
& =  \mu_1(hf) \mu_1(g_1) = \delta(h) \mu_1(f) \mu_1(g_1) = \delta(h) \mu(f) \mu(g).
\end{align*}
In the first step, we applied the definition of $\mu(fg)$; in the second, we used Lemma~\ref{lem:large-2}(c); in the third, we used commutativity of the diagram; in the fourth, we used Lemma~\ref{lem:large-2}(d); in the fifth, we used Lemma~\ref{lem:large-2}(c) again; and in the final step, we applied the definition of $\mu$ and used Lemma~\ref{lem:large-4}. It follows that $\mu(fg)=\mu(f) \mu(g)$ in $\Theta(\sE)[S^{-1}]$, as required.

We now treat the general case. Let $X_1 \to X$ be a map with $X_1$ transitive and $\sE$-smooth. Let $f_1 \colon Y_1 \to X_1$ and $g_1 \colon Z_1 \to Y_1$ be the base changes of $f$ and $g$. Let $Y_{1,1}, \ldots, Y_{1,n}$ be the orbits on $Y_1$, let $Z_{1,i}$ be the inverse image of $Y_{1,i}$ in $Z_1$, and let $f_{1,i} \colon Y_{1,i} \to X_1$ and $g_{1,i} \colon Z_{1,i} \to Y_{1,i}$ be the restrictions of $f_1$ and $g_1$. Note that $g_{1,i}$ is the base change of $g$ along $Y_{1,i} \to Y$, and so $\mu(g_{1,i})=\mu(g)$ for each $i$ by Lemma~\ref{lem:large-4}. We have
\begin{displaymath}
\mu(fg)=\mu(f_1 g_1) = \sum_{i=1}^n \mu(f_{1,i}) \mu(g_{1,i}) = \mu(f_1) \mu(g) = \mu(f) \mu(g).
\end{displaymath}
In the first step, we used Lemma~\ref{lem:large-4}; in the second, the additivity of $\mu$; in the third, that $\mu(g_{1,i})=\mu(g)$ and again the additivity of $\mu$; and in the final step, Lemma~\ref{lem:large-4} again. This completes the proof.
\end{proof}

We have thus constructed the measure $\mu$. We can now finish up.

\begin{proof}[Proof of Proposition~\ref{prop:large}]
As discussed before Proposition~\ref{prop:large}, there is a natural ring homomorphism
\begin{displaymath}
i \colon \Theta(G, \sE)[1/S] \to \Theta(G, \sF)[1/S].
\end{displaymath}
The measure $\mu$ furnishes us a ring homomorphism $j$ in the opposite direction. It is clear from the construction of $\mu$ that $j \circ i$ is the identity, and so $i$ is injective. To complete the proof, we show that $i$ is surjective.

In what follows, we write $\{ - \}$ for classes in $\Theta(G, \sF)$. Let $\{f\}$ be given, where $f \colon Y \to X$ is a map of $\sF$-smooth $G$-sets with $X$ transitive. We must show that $\{f\}$ belongs to the image of $i$. Let $X' \to X$ be a map where $X'$ is $\sE$-smooth and transitive, and let $f' \colon Y' \to X'$ be the base change of $f$. Next, choose an $S$-cover $h \colon Y_1 \to Y'$ where $Y_1$ is $\sE$-smooth. Then
\begin{displaymath}
i([f'h]) = \{f'h\} = \delta(h) \{f'\} = \delta(h) \{f\}.
\end{displaymath}
Thus $\{f\}$ is the image of the element $\delta(h)^{-1} [f'h]$, and so the result follows.
\end{proof}

\subsection{Binomiality} \label{ss:binom}

A ring $R$ is a \defn{binomial ring} if it is $\bZ$-torsion free and for every $x \in R$ and $n \in \bN$ the element $\binom{x}{n}$ of $R \otimes \bQ$ belongs to $R$. To state our results on binomiality of $\Theta$ rings, we make use of the following condition.
\begin{description}[align=right,labelwidth=1.75cm,leftmargin=!,font=\normalfont]
\item[$\IND(m)$] For finite $A \subset \Omega$, the index of $G(A)$ in $G[A]$ divides a power of $m$.
\end{description}
One more piece of notation: for a $G$-set $X$, let $X^{[n]}$ be the subset of $X^n$ with distinct coordinates, and let $X^{(n)}=X^{[n]}/\fS_n$ be the set of $n$-element subsets of $X$. We are now ready to give our main result on binomiality. In what follows, $(G, \Omega)$ is an oligomorphic group.

\begin{theorem} \label{thm:theta-binom}
Suppose $\IND(m)$ holds. Then $\Theta(G, \Omega)[1/m]$ is a binomial ring.
\end{theorem}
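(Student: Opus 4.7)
The strategy is to move from the stabilizer class $\sE(\Omega)$ to its enlargement $\sE^+(\Omega)$, for which the underlying category $\bS(G, \sE^+(\Omega))$ is closed under quotients by finite groups. This closure is exactly what is needed to form the symmetric-power constructions that witness binomial coefficients, and the hypothesis $\IND(m)$ ensures that the enlargement is invisible to $\Theta$ after inverting $m$.

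\emph{Step 1 ($S$-largeness).} Let $S$ denote the set of prime divisors of $m$. I first show that $\sE(\Omega) \subset \sE^+(\Omega)$ is $S$-large. Given $U \in \sE^+(\Omega)$, choose a finite $A \subset \Omega$ with $G(A) \subset U$ of finite index, and set $B = \acl(A)$. Proposition~\ref{prop:acl} yields $G(B) \subset U \subset G[B]$, and $G(B) \in \sE(\Omega)$ since $B$ is finite. The index $[U : G(B)]$ divides $[G[B] : G(B)]$, which divides a power of $m$ by $\IND(m)$; in particular, every prime factor of $[U : G(B)]$ lies in $S$.

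\emph{Step 2 (Transfer).} Proposition~\ref{prop:large} then identifies $\Theta(G, \Omega)[1/m]$ with $\Theta(G, \sE^+(\Omega))[1/m]$. Since binomiality is preserved under localization, it suffices to show that $\Theta(G, \sE^+(\Omega))$ is itself a binomial ring.

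\emph{Step 3 (Binomiality on the enlargement).} For any map of $\sE^+$-smooth $G$-sets $f \colon Y \to X$ with $X$ transitive, consider the $n$-fold fiber power $Y^{[n]/X}$, obtained from $Y \times_X \cdots \times_X Y$ by deleting the large diagonal; this is $\sE^+$-smooth because fiber products and subobjects of $\sE^+$-smooth $G$-sets are $\sE^+$-smooth. By the closure of $\sE^+(\Omega)$ under finite-group quotients observed in \S\ref{ss:stab}, the quotient $Y^{(n)/X} := Y^{[n]/X}/\fS_n$ is again $\sE^+$-smooth. A direct computation from the axioms of Definition~\ref{defn:meas}, parallel to Proposition~\ref{prop:bcoeff}, then shows $[Y^{(n)/X} \to X] = \binom{[f]}{n}$ in $\Theta(G, \sE^+(\Omega))$. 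This verifies the binomial-coefficient condition on the generating classes $[f]$; combined with additivity and the $\bZ$-torsion-freeness argument of \cite[\S 5]{repst} — whose only essential input is the availability of these symmetric-power constructions inside the ambient category of $G$-sets — it propagates to all of $\Theta(G, \sE^+(\Omega))$.

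The main obstacle is Step 3: one must carefully track that every auxiliary $G$-set appearing in the binomial argument of \cite[\S 5]{repst} is still $\sE^+$-smooth under the restricted stabilizer class, and in particular that the torsion-freeness argument there survives the passage from all open subgroups to $\sE^+(\Omega)$. Step 1 and Step 2 are comparatively routine applications of the machinery from \S\ref{ss:chgstb}.
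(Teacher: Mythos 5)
Your proposal is correct and follows essentially the same route as the paper: establish $S$-largeness of $\sE(\Omega)\subset\sE^+(\Omega)$ via algebraic closure and $\IND(m)$, transfer along Proposition~\ref{prop:large}, and invoke the closure of $\bS(G,\sE^+)$ under finite-group quotients to get binomiality of $\Theta(G,\sE^+)$ from the argument of \cite[\S 5]{repst}. The only cosmetic difference is that you descend all the way to $G(B)=G(\acl(A))$ rather than to $G(A)$ when verifying $S$-largeness, which is equally valid.
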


\begin{proof}
Let $\sE=\sE(\Omega)$. Recall that the category $\bS(G, \sE^+)$ is closed under quotients by finite groups. In particular, if $X$ is an $\sE^+$-smooth $G$-set then so is $X^{(n)}$, for any $n \ge 0$. It follows from \cite[Theorem~5.1]{repst} and \cite[Remark~5.7]{repst} that $\Theta(G, \sE^+)$ is binomial. Hence $\Theta(G, \sE^+)[1/m]$ is also binomial \cite[Proposition~5.1]{Elliott}.

Let $S$ be the set of prime divisors of $m$. We claim that $\sE \subset \sE^+$ is $S$-large; this will establish the theorem by Proposition~\ref{prop:large}. Let $U \in \sE^+$ be given. By definition, there is some finite subset $A$ of $\Omega$ such that $U$ contains $G(A)$ with finite index. Let $B=\acl(A)$ be the algebraic closure of $A$. Then we have containments
\begin{displaymath}
G(B) \subset G(A) \subset U \subset G[B]
\end{displaymath}
by Proposition~\ref{prop:acl}. Thus $[U:G(A)]$ divides $[G[B]:G(B)]$, and therefore divides some power of $m$ by $\IND(m)$. This proves the claim.
\end{proof}

We now explain how to compute binomial coefficients in $\Theta$ rings. In what follows, $G$ is pro-oligomorphic. Let $f \colon Y \to X$ be a map of $G$-sets, with $X$ transitive. We let $Y^n_{/X}$ be the $n$-fold fiber product of $Y$ with itself over $X$, we let $Y^{[n]}_{/X}$ be the subset of $Y^n_{/X}$ where the large diagonal has been removed, and we let $Y^{(n)}_{/X}$ be the quotient of $Y^{[n]}_{/X}$ by $\fS_n$. We note that each of these sets admits a natural map to $X$.

\begin{proposition} \label{prop:bcoeff}
Let $\sE$ be a stabilizer class in $G$. Let $f \colon Y \to X$ be a map of $\sE$-smooth $G$-sets, with $X$ transitive, let $x=[f]$ be its class in $\Theta(G, \sE)$, and let $n \ge 0$ be an integer. Then
\begin{displaymath}
x(x-1)\cdots(x-n+1) = [Y^{[n]}_{/X} \to X]
\end{displaymath}
holds in $\Theta(G, \sE)$. Furthermore, there exist $G$-orbits $Z_1, \ldots, Z_r$ on $Y^{[n]}_{/X}$ and positive integers $d_1, \ldots, d_r$ such that $d_i$ divides $\# \Aut_G(Z_i)$ and
\begin{displaymath}
\binom{x}{n} = \sum_{i=1}^r d_i^{-1} \cdot [Z_i \to X]
\end{displaymath}
holds in $\Theta(G, \sE) \otimes \bQ$.
\end{proposition}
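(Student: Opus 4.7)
The plan is to deduce both statements from the measure axioms by combining fiber product computations with the natural action of $\fS_n$ on $Y^{[n]}_{/X}$.

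For (a), I would induct on $n$. The base case $n=0$ is immediate from axiom (a), as $Y^{[0]}_{/X} \to X$ is the identity. For the inductive step, set $U = Y \times_X Y^{[n]}_{/X}$ and compute $[U \to X]$ in two ways. First, decompose $Y^{[n]}_{/X}$ into $G$-orbits $\bigsqcup_j W_j$. For each transitive $W_j$, the projection $Y \times_X W_j \to W_j$ is the base change of $f$, hence has class $x$ by axiom (d); decomposing $Y \times_X W_j$ further into $G$-orbits and invoking axiom (c) on each summand before reassembling by additivity yields $[Y \times_X W_j \to X] = x \cdot [W_j \to X]$. Summing over $j$ gives $[U \to X] = x \cdot [Y^{[n]}_{/X} \to X]$. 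Second, $U$ splits as the disjoint union of $Y^{[n+1]}_{/X}$ with the $n$ ``diagonals'' $\{(y_0, \ldots, y_n) \in U : y_0 = y_i\}$ for $i = 1, \ldots, n$, each canonically $G$-isomorphic over $X$ to $Y^{[n]}_{/X}$; additivity yields $[U \to X] = [Y^{[n+1]}_{/X} \to X] + n \cdot [Y^{[n]}_{/X} \to X]$. Equating the two expressions and applying the inductive hypothesis completes the step.

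For (b), I exploit the $\fS_n$-action on $Y^{[n]}_{/X}$ by coordinate permutation. This action is free (distinct coordinates cannot be permuted non-trivially), commutes with the $G$-action, and commutes with the map to $X$. Let $Z_1, \ldots, Z_r$ represent the $\fS_n$-orbits on the set of $G$-orbits of $Y^{[n]}_{/X}$, with $\fS_n$-stabilizers $H_i$. Then the $\fS_n$-orbit of $Z_i$ contains $[\fS_n : H_i]$ $G$-orbits, each isomorphic to $Z_i$ as a $G$-set over $X$. Additivity gives
\[ [Y^{[n]}_{/X} \to X] = \sum_{i=1}^r [\fS_n : H_i] \cdot [Z_i \to X], \]
and dividing by $n!$ while invoking part (a) yields $\binom{x}{n} = \sum_i (\# H_i)^{-1} [Z_i \to X]$ in $\Theta(G, \sE) \otimes \bQ$. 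Finally, freeness of the $\fS_n$-action restricts to freeness of the $H_i$-action on $Z_i$, which combined with $G$-equivariance yields an injective homomorphism $H_i \hookrightarrow \Aut_G(Z_i)$; setting $d_i = \# H_i$, Lagrange's theorem gives $d_i \mid \# \Aut_G(Z_i)$.

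The main obstacle is the non-transitivity of the intermediate objects that appear: axiom (c) requires all three sets in a composition to be transitive, and axiom (d) requires a transitive target, so the ``multiply by $x$'' step in (a) cannot be executed in one stroke but must be carried out orbit-by-orbit on both $Y^{[n]}_{/X}$ and each subsequent $Y \times_X W_j$ before additivity can reassemble the pieces.
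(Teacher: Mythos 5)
Your proposal is correct and follows essentially the same route as the paper: both rest on the multiplicativity $[h_1][h_2]=[Y_1\times_X Y_2\to X]$ (proved orbit-by-orbit exactly as you describe) together with the free $\fS_n$-action permuting the $G$-orbits of $Y^{[n]}_{/X}$, with the stabilizers $H_i$ embedding into $\Aut_G(Z_i)$. The only cosmetic differences are that the paper cites "standard combinatorics" for the falling-factorial identity where you supply the explicit induction via the diagonal decomposition of $Y\times_X Y^{[n]}_{/X}$, and that for the binomial formula the paper passes through the quotient $Y^{(n)}_{/X}=\bigsqcup_i Z_i/\Gamma_i$ whereas you simply divide the orbit decomposition of $[Y^{[n]}_{/X}\to X]$ by $n!$ in $\Theta(G,\sE)\otimes\bQ$.
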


\begin{proof}
First suppose that $h_1 \colon Y_1 \to X$ and $h_2 \colon Y_2 \to X$ are maps of $\sE$-smooth $G$-sets, and let $h \colon Y_1 \times_X Y_2 \to X$ be the natural map. We claim that $[h_1][h_2]=[h]$. To see this, it suffices to treat the case where $Y_1$ and $Y_2$ are transitive. Let $h_2' \colon Y_1 \times_X Y_2 \to Y_1$ be the projection map. Then $[h_2]=[h_2']$, and $[h]=[h_2'][h_1]$. This establishes the claim.

From the above paragraph, we see that $x^n = [Y^n_{/X} \to X]$. The first formula now follows from some standard combinatorics; see \cite[Proposition~5.15]{repst} for a similar argument. Since the symmetric group $\fS_n$ acts freely on $Y^{[n]}_{/X}$, we have $[Y^{(n)}_{/X}]=\binom{x}{n}$ by \cite[\S 3.4(b)]{repst}. The set $Y^{[n]}_{/X}$ decomposes into $G$-orbits, and these orbits are permuted by $\fS_n$. Let $Z_1, \ldots, Z_r$ be a system of representatives for this $\fS_n$-action, i.e., each $Z_i$ is a $G$-orbit on $Y^{[n]}_{/X}$, and each $G$-orbit belongs to the $\fS_n$-orbit of exactly one $Z_i$. Let $\Gamma_i \subset \Aut_G(Z_i)$ be the stabilizer in $\fS_n$ of $Z_i$. Then $Y^{(n)}_{/X}$ is the disjoint union of the $G$-sets $Z_i/\Gamma_i$. Thus the final formula follows, with $d_i=\# \Gamma_i$.
\end{proof}

\subsection{The filtration on $\Theta$} \label{ss:filt}

Let $(G, \Omega)$ be oligomorphic. We say that a map $f \colon Y \to X$ of transitive $G$-sets is \defn{basic} if $Y$ is an orbit on $\Omega^{\ell+1}$, $X$ is an orbit on $\Omega^{\ell}$, and $f$ is the restriction of the projection onto the first $\ell$ factors; we also say that the class $[f] \in \Theta(G, \Omega)$ is \defn{basic}. We let $\Omega_{\le 1}(G, \Omega)$ be the additive subgroup of $\Theta(G, \Omega)$ generated by basic classes; more generally, we let $\Omega_{\le n}(G, \Omega)$ be the additive group generated by all $n$-fold products of basic classes.

\begin{proposition} \label{prop:filt}
We have the following:
\begin{enumerate}
\item Let $p \colon \Omega^{n+\ell} \to \Omega^{\ell}$ be a projection onto some set of coordinates, let $Y$ be a $G$-orbit on $\Omega^{n+\ell}$, let $X=p(Y)$, and let $f \colon Y \to X$ be the restriction of $p$. Then $[f]$ is a product of $n$ basic classes, and thus belongs to $\Theta_{\le n}(G, \Omega)$.
\item Let $f \colon Y \to X$ be a map of transitive $\sE(\Omega)$-smooth $G$-sets. Then $[f]$ is a product of basic classes.
\item The ring $\Theta(G, \Omega)$ is generated by $\Theta_{\le 1}(G, \Omega)$; in other words, $\Theta(G, \Omega)$ is the union of the $\Theta_{\le n}(G, \Omega)$.
\end{enumerate}
\end{proposition}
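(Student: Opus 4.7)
The plan is to prove (a), (b), (c) in sequence; part (a) does the substantive work, (b) reduces to (a) via a graph construction, and (c) is a formal consequence of (b) together with additivity.

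For (a), the plan is to decompose $p$ into $n$ single-coordinate-forgetting projections. Let $j_1 < \cdots < j_n$ index the discarded coordinates, let $p_i$ delete $j_1, \ldots, j_i$, and set $Y_i = p_i(Y)$, so $Y_0 = Y$ and $Y_n = X$. Each restricted map $Y_{i-1} \to Y_i$ is a one-coordinate-forgetting projection between $G$-orbits on $\Omega^{k+1}$ and $\Omega^k$ (with $k = n+\ell-i$); the key observation is that any coordinate permutation on $\Omega^{k+1}$ is a $G$-equivariant bijection, since $G$ acts diagonally, so such a map is $\bS(G, \Omega)$-isomorphic to the ``first $k$ factors'' projection and hence its class is basic by definition. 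The composition axiom (Definition~\ref{defn:meas}(c)) then gives $[f] = \prod_{i=1}^n [Y_{i-1} \to Y_i]$, a product of $n$ basic classes.

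For (b), I would realize $f$ as such a projection via its graph. Write $Y \subset \Omega^m$ and $X \subset \Omega^\ell$ as $G$-orbits, pick $y \in Y$, set $x = f(y)$, and let $Z \subset \Omega^{m+\ell}$ be the $G$-orbit of $(y, x)$. The two projections $\pi_Y \colon Z \to Y$ and $\pi_X \colon Z \to X$ are $G$-equivariant, with $f \circ \pi_Y = \pi_X$. The critical step is to show $\pi_Y$ is an isomorphism: surjectivity is automatic from transitivity, while injectivity is where $G$-equivariance of $f$ enters, since if $g(y,x) = (y, x')$ then $gy = y$ forces $x' = gx = g f(y) = f(gy) = f(y) = x$. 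Consequently $[f] = [\pi_X]$, which by part (a) is a product of $m$ basic classes.

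For (c), the ring $\Theta(G, \Omega)$ is by construction generated by the classes $[f]$ of maps $f$ in $\bS(G, \Omega)$ with transitive target, and by the additivity axiom (Definition~\ref{defn:meas}(b)) each such class decomposes as a finite sum of classes whose source is also transitive. Part (b) then writes every such summand as a product of basic classes, placing it in some $\Theta_{\le n}(G, \Omega)$, so $\Theta(G, \Omega) = \bigcup_n \Theta_{\le n}(G, \Omega)$. The only mildly delicate step is the injectivity check for $\pi_Y$ in (b); everything else is bookkeeping about composition and about coordinate permutations commuting with the diagonal $G$-action.
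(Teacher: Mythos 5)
Your proposal is correct and follows essentially the same route as the paper: factor the projection into single-coordinate steps for (a), replace $f$ by the projection from its graph for (b), and deduce (c) formally from additivity and generation. Your explicit verification that the graph is a $G$-orbit mapping isomorphically to $Y$ just fills in a step the paper leaves implicit.
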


\begin{proof}
(a) We may as well assume that $p$ projects onto the first $\ell$ coordinates. For $0 \le i \le n$, let $Y_i$ be the image of $Y$ in $\Omega^{\ell+i}$ under the projection onto the first $\ell+i$ coordinates. For $1 \le i \le n$, the projection map $f_i \colon Y_i \to Y_{i-1}$ is basic. As $f=f_1 \circ \cdots \circ f_n$, we have $[f]=[f_1] \cdots [f_{\ell}]$, as required.

(b) Suppose that $Y$ is an orbit on $\Omega^n$ and $X$ is an orbit on $\Omega^{\ell}$. Let $\Gamma \subset \Omega^{n+\ell}$ be the graph of $f$, and let $f' \colon \Gamma \to X$ be the projection map. Then $f'$ is isomorphic to $f$, and so $[f]=[f']$. By (a), $f'$ is a product of $n$ basic classes.

(c) If $f \colon Y \to X$ is a map of transitive $\sE$-smooth $G$-sets then $[f]$ belongs to $\bigcup_{n \ge 1} \Theta_{\le n}(G, \Omega)$ by (b). Since such classes generate $\Theta(G, \Omega)$, the result follows.
\end{proof}

We will make use of the following condition.
\begin{itemize}[leftmargin=2cm]
\item[$\GEN(m)$:] $\Theta_{\le 1}(G, \Omega)[1/m]$ is a finitely generated $\bZ[1/m]$-module.
\end{itemize}
We will see (Example~\ref{ex:FMM-vs-Fin}) that this is weaker than the condition in Theorem~\ref{mainthm}(b). We note a simple corollary of the proposition:

\begin{corollary} \label{cor:theta-fg}
If $\GEN(m)$ holds then $\Theta(G, \Omega)[1/m]$ is a finitely generated ring.
\end{corollary}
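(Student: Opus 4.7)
The proof is essentially an immediate consequence of Proposition~\ref{prop:filt}(c): the ring $\Theta(G,\Omega)$ is the union $\bigcup_{n\ge 0} \Theta_{\le n}(G,\Omega)$, and by definition $\Theta_{\le n}(G,\Omega)$ is the additive span of $n$-fold products of elements of $\Theta_{\le 1}(G,\Omega)$. Thus $\Theta(G,\Omega)$ is generated as a $\bZ$-algebra by the abelian group $\Theta_{\le 1}(G,\Omega)$.

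The plan is therefore the following. First, by hypothesis $\GEN(m)$, choose a finite set $x_1, \ldots, x_r$ of elements of $\Theta_{\le 1}(G,\Omega)[1/m]$ that generate it as a $\bZ[1/m]$-module. Next, localization is compatible with the filtration, so $\Theta(G,\Omega)[1/m]$ is still the union of the $\Theta_{\le n}(G,\Omega)[1/m]$, and each $\Theta_{\le n}(G,\Omega)[1/m]$ is the $\bZ[1/m]$-linear span of $n$-fold products of elements of $\Theta_{\le 1}(G,\Omega)[1/m]$, which by the choice of $x_i$ is contained in the $\bZ[1/m]$-linear span of $n$-fold products of the $x_i$. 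Consequently every element of $\Theta(G,\Omega)[1/m]$ is a $\bZ[1/m]$-polynomial in $x_1, \ldots, x_r$, so $\Theta(G,\Omega)[1/m]$ is a finitely generated $\bZ[1/m]$-algebra, and hence a finitely generated $\bZ$-algebra (adjoining the additional generator $1/m$).

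There is no real obstacle here; the only thing to notice is that Proposition~\ref{prop:filt}(c) gives ring-generation by the module $\Theta_{\le 1}$, so a module generating set automatically becomes a ring generating set.
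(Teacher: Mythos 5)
Your argument is correct and is exactly the (implicit) argument the paper intends: the corollary is stated as an immediate consequence of Proposition~\ref{prop:filt}(c), and you have simply written out the routine step that a finite $\bZ[1/m]$-module generating set for $\Theta_{\le 1}(G,\Omega)[1/m]$ becomes a finite algebra generating set for the whole ring. No discrepancies.
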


\section{The main results} \label{s:main}

\subsection{Overview}

Fix an oligomorphic group $(G, \Omega)$ and a positive integer $m$ throughout \S \ref{s:main}. Assuming $\IND(m)$ and $\GEN(m)$, the results from \S \ref{s:oligo} show that $\Theta(G, \Omega)[1/m]$ is a finitely generated binomial ring. The structure theorem for such rings thus tells us
\begin{displaymath}
\Theta(G, \Omega)[1/m] = \bZ[1/m_1] \times \cdots \times \bZ[1/m_n]
\end{displaymath}
for some integers $m_1, \ldots, m_n$. This is already a strong result, since it tells us that there are finitely many $\bC$-valued measures for $(G, \Omega)$; in fact, there are exactly $n$ of them. However, we would like to be able to say something about the $m_i$'s and $n$. This is what our main theorem accomplishes: we show $m_i=m$ for all $i$, and give an upper bound on $n$.

The key idea is that if $x$ is a basic class in $\Theta(G, \Omega)$ then we have a meaningful description of $\binom{x}{n}$ from Proposition~\ref{prop:bcoeff}, and this leads to two important properties of these binomial coefficients (Proposition~\ref{prop:pi}). So $\Omega(G, \Omega)[1/m]$ has more structure than just a binomial ring: it comes with a generating set whose binomial coefficients are constrained. Combining this observation with some elementary number theoretic results about binomial coefficients (see \S \ref{ss:binom3}) leads to our desired result.

\subsection{Binomial coefficients} \label{ss:binom2}

The following proposition gives two important properties of binomial coefficients of basic classes in $\Theta(G, \Omega)$.

\begin{proposition} \label{prop:pi}
Suppose $\IND(m)$ holds. Let $x \in \Theta(G, \Omega)$ be a basic class, and let $n$ be a positive integer.
\begin{enumerate}
\item The element $\binom{x}{n}$ belongs to $\Theta_{\le n}(G, \Omega)[1/m]$.
\item The element $x$ divides the element $\binom{x}{n}$ in the ring $\Theta(G, \Omega)[1/m]$.
\end{enumerate}
\end{proposition}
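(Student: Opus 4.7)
My plan is to apply Proposition~\ref{prop:bcoeff} to the basic class $x=[f]$, where $f\colon Y\to X$ is the restriction of the projection $\Omega^{\ell+1}\to\Omega^{\ell}$ to the $G$-orbit $Y$. This yields
\begin{displaymath}
\binom{x}{n}=\sum_{i=1}^{r} d_i^{-1}\,[Z_i\to X]
\end{displaymath}
in $\Theta(G,\Omega)\otimes\bQ$, where $Z_1,\ldots,Z_r$ are $G$-orbits on $Y^{[n]}_{/X}$ and each $d_i$ divides $\#\Aut_G(Z_i)$. Both (a) and (b) will be extracted from this single formula.

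For (a), identify $Y^{[n]}_{/X}$ with its natural image in $\Omega^{\ell+n}$: a point is specified by a common image in $X\subset\Omega^{\ell}$ together with $n$ pairwise distinct extra coordinates. Each $Z_i$ is thus a $G$-orbit on $\Omega^{\ell+n}$, and $Z_i\to X$ is the restriction of a coordinate projection, so Proposition~\ref{prop:filt}(a) shows $[Z_i\to X]$ is a product of $n$ basic classes and hence lies in $\Theta_{\le n}(G,\Omega)$. It remains to show each $d_i$ divides a power of $m$. Writing the stabilizer of a point of $Z_i$ as $G(A)$ for some finite $A\subset\Omega$, we have $\#\Aut_G(Z_i)=[N_G(G(A)):G(A)]$; the subgroup $U=N_G(G(A))$ is open and contains $G(A)$ as a normal subgroup of finite index (by Roelcke pre-compactness applied to $U$). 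Proposition~\ref{prop:acl}, applied with $B=\acl(A)$, then sandwiches $G(B)\subset G(A)\subset U\subset G[B]$, so $[U:G(A)]$ divides $[G[B]:G(B)]$, which divides a power of $m$ by $\IND(m)$. Hence $d_i^{-1}\in\bZ[1/m]$, and $\binom{x}{n}\in\Theta_{\le n}(G,\Omega)[1/m]$.

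For (b), the key observation is that each $Z_i$ admits a natural $G$-equivariant projection $\pi_i\colon Z_i\to Y$ onto the first $Y$-factor of $Y^{[n]}_{/X}\subset Y^n_{/X}$; since $Z_i$ is a $G$-orbit and $Y$ is transitive, $\pi_i$ is a map of transitive $\sE(\Omega)$-smooth $G$-sets, and the structure map $Z_i\to X$ factors as $f\circ\pi_i$. The composition relation (the analogue of Definition~\ref{defn:meas}(c) in $\Theta$) then yields $[Z_i\to X]=[f]\cdot[\pi_i]=x\cdot[Z_i\to Y]$, and substituting into the formula above gives
\begin{displaymath}
\binom{x}{n}=x\cdot\sum_{i=1}^{r} d_i^{-1}\,[Z_i\to Y].
\end{displaymath}
The right-hand factor lies in $\Theta(G,\Omega)[1/m]$ by the divisibility of $d_i$ established in the course of proving (a), so $x$ divides $\binom{x}{n}$ in $\Theta(G,\Omega)[1/m]$.

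The only substantive obstacle is the bound on $\#\Aut_G(Z_i)$: the hypothesis $\IND(m)$ directly controls $[G[B]:G(B)]$ only for an algebraically closed $B$, whereas the stabilizers of points of $Z_i$ are of the form $G(A)$ for arbitrary finite $A$, and one needs to control the index of $G(A)$ in its normalizer. Bridging this gap is precisely what Proposition~\ref{prop:acl} was designed for, and invoking it is the crux of the argument; everything else is direct bookkeeping in $\Theta(G,\Omega)$.
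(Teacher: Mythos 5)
Your argument is correct and follows essentially the same route as the paper's proof: Proposition~\ref{prop:bcoeff} combined with Proposition~\ref{prop:filt}(a) for part (a), and the factorization $[Z_i\to X]=[Z_i\to Y]\cdot[Y\to X]$ for part (b); your explicit control of $\#\Aut_G(Z_i)$ via the normalizer and Proposition~\ref{prop:acl} is precisely what the paper compresses into the phrase ``by our assumptions.'' The one step you leave implicit is that the identity for $\binom{x}{n}$, which a priori holds only in $\Theta(G,\Omega)\otimes\bQ$, descends to $\Theta(G,\Omega)[1/m]$ because that ring is torsion-free (being binomial, by Theorem~\ref{thm:theta-binom}).
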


\begin{proof}
Note that $\Theta(G, \Omega)[1/m]$ is a binomial ring (Theorem~\ref{thm:theta-binom}). Suppose $x=[Y \to X]$, where $Y$ is an orbit on $\Omega^{\ell+1}$, $X$ is the projection of $Y$ to $\Omega^{\ell}$, and $Y \to X$ is the projection map. By Proposition~\ref{prop:bcoeff}, we have
\begin{displaymath}
\binom{x}{n} = \sum_{i=1}^r d_i^{-1} \cdot [Z_i \to X],
\end{displaymath}
in $\Theta(G, \Omega) \otimes \bQ$, using the same notation as there. Both sides of this equation belong to $\Theta(G, \Omega)[1/m]$ by our assumptions. Since this ring is torsion-free (as it is binomial), the above equality actually holds in this ring.

Now, $Y^{[n]}_{/X}$ is naturally a $G$-subset of $\Omega^{\ell+n}$, and the map to $X$ is simply the projection onto the first $\ell$ coordinates. Thus $[Z_i \to X]$ belongs to $\Theta_{\le n}(G, \Omega)$ by Proposition~\ref{prop:filt}(a), which proves statement (a). The image of $Z_i$ in $\Omega^{\ell+1}$ under the projection map on the first $\ell+1$ coordinates is simply $Y$. We thus have
\begin{displaymath}
[Z_i \to X] = [Z_i \to Y] \cdot [Y \to X],
\end{displaymath}
which shows that each $[Z_i \to X]$ is divisible by $x$. Hence (b) follows.
\end{proof}

\subsection{More binomial coefficients} \label{ss:binom3}

We collect here two lemmas on binomial coefficients of rational numbers that we will need to prove our main theorem. Let $m$ be an integer.

\begin{lemma} \label{lem:main2}
Let $a_1, \ldots, a_r$ be rational numbers. Suppose that for each $1 \le i \le r$ and each $n \in \bN$ we have an expression
\begin{displaymath}
\binom{a_i}{n} = P_{i,n}(a_1, \ldots, a_r)
\end{displaymath}
where $P_{i,n} \in \bZ[1/m][T_1, \ldots, T_r]$ is a polynomial of total degree $\le n$. Then each $a_i$ belongs to $\bZ[1/m]$.
\end{lemma}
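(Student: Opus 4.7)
The plan is to argue by contradiction at a prime not dividing $m$, using $p$-adic valuations. Suppose some $a_i \notin \bZ[1/m]$; then there is a prime $p$ with $p \nmid m$ for which $N := \min_{1 \le j \le r} v_p(a_j)$ is strictly negative. Fix an index $i$ achieving this minimum, so $v_p(a_i) = N < 0$.

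First I would compute $v_p\binom{a_i}{n}$ directly. For each $j \in \{0, 1, \ldots, n-1\}$ the ultrametric inequality gives $v_p(a_i - j) = N$, since $v_p(j) \ge 0 > N$. Hence
\[
v_p\!\binom{a_i}{n} \;=\; v_p\bigl(a_i(a_i - 1)\cdots(a_i - n + 1)\bigr) - v_p(n!) \;=\; nN - v_p(n!).
\]

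Next I would bound the right-hand side from below. Since $P_{i,n}$ has coefficients in $\bZ[1/m]$ and $p \nmid m$, every coefficient is $p$-integral. For each monomial $T_1^{e_1}\cdots T_r^{e_r}$ appearing in $P_{i,n}$ we have $\sum_j e_j \le n$, and since $N < 0$ while $v_p(a_j) \ge N$ for every $j$,
\[
v_p\bigl(a_1^{e_1} \cdots a_r^{e_r}\bigr) \;=\; \sum_j e_j v_p(a_j) \;\ge\; N \sum_j e_j \;\ge\; Nn.
\]
The nonarchimedean inequality for sums then gives $v_p\bigl(P_{i,n}(a_1, \ldots, a_r)\bigr) \ge Nn$.

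Combining the two estimates via the hypothesis $\binom{a_i}{n} = P_{i,n}(a_1, \ldots, a_r)$ yields $nN - v_p(n!) \ge Nn$, i.e., $v_p(n!) \le 0$ for every $n$. Choosing $n = p$ produces the contradiction $1 = v_p(p!) \le 0$, completing the proof. The only subtlety to track is the sign handling in the monomial bound: the step $e_j v_p(a_j) \ge e_j N$ uses $v_p(a_j) \ge N$ together with $e_j \ge 0$ and $N < 0$, which matters when some $a_j$ happens to already be $p$-integral. This is the main (minor) obstacle; otherwise the argument is a direct valuation computation.
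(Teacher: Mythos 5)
Your proof is correct and is essentially the same as the paper's: both select an index with minimal (negative) $p$-adic valuation $N$ for a prime $p \nmid m$, compute $v_p\binom{a_i}{n} = nN - v_p(n!)$ via the ultrametric equality, bound the polynomial side below by $nN$, and derive a contradiction at $n = p$. No issues.
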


\begin{proof}
Let $p \nmid m$ be a prime number, and write $v_p$ for the $p$-adic valuation on rational numbers. Suppose $v_p(a_i)<0$ for some $i$. Relabeling if necessary, assume $v_p(a_1)=-v$ and $v_p(a_i) \ge -v$ for all $i$. Now, if $\ell$ is an integer then $v_p(a_1-\ell)=-v$. We thus see that $v_p(\binom{a_1}{n})=-nv-v_p(n!)$; in particular, $v_p(\binom{a_1}{p})=-nv-1$. If $b$ is any degree $n$ polnomial expression in the $a_i$'s with coefficients in $\bZ[1/m]$ then $v_p(b) \ge -nv$. We thus cannot have $\binom{a_1}{p}=b$, and so we have obtained a contradiction. We conclude that $v_p(a_i) \ge 0$ for each $i$. Since this holds for all $p$ not dividing $m$, each $a_i$ belongs to $\bZ[1/m]$.
\end{proof}

\begin{lemma} \label{lem:main3}
Let $a \in \bZ[1/m]$. Suppose $a$ divides $\binom{a}{n}$ in $\bZ[1/m]$ for each $n \ge 1$. Then $a$ is either~0 or a unit of $\bZ[1/m]$.
\end{lemma}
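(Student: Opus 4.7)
The plan is to prove the contrapositive: assuming $a$ is nonzero and not a unit of $\bZ[1/m]$, I will exhibit an $n$ for which $\binom{a}{n}/a \notin \bZ[1/m]$, so that $a$ fails to divide $\binom{a}{n}$ in $\bZ[1/m]$.

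The first step is to translate non-unithood into valuation language. Elements of $\bZ[1/m]$ automatically satisfy $v_p(\,\cdot\,) \ge 0$ for every prime $p \nmid m$, and the units of $\bZ[1/m]$ are characterized by equality throughout. Hence, since $a$ is nonzero and not a unit, there must exist some prime $p$ with $p \nmid m$ such that $v_p(a) \ge 1$.

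The natural candidate is then $n = p$, and the key computation is $v_p(\binom{a}{p}/a)$ via the factorization
\begin{displaymath}
\binom{a}{p}/a = \frac{(a-1)(a-2)\cdots(a-p+1)}{p!}.
\end{displaymath}
For each $j \in \{1, \ldots, p-1\}$ we have $v_p(j) = 0$ while $v_p(a) \ge 1$, so the non-archimedean property (valuations unequal, so the valuation of the difference equals the minimum) forces $v_p(a-j) = 0$. Thus the numerator has $v_p = 0$, while $v_p(p!) = 1$, yielding $v_p(\binom{a}{p}/a) = -1 < 0$. Since $p \nmid m$, every element of $\bZ[1/m]$ has nonnegative $p$-adic valuation, so this contradicts $\binom{a}{p}/a \in \bZ[1/m]$, completing the contrapositive.

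I do not anticipate any serious obstacle: the argument is purely arithmetic, and the only care required is in identifying the units of $\bZ[1/m]$ via $p$-adic valuations and in the one-line non-archimedean valuation computation above.
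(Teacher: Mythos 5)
Your proof is correct and takes essentially the same route as the paper: pick a prime $p \nmid m$ with $v_p(a) \ge 1$, take $n = p$, and compute the $p$-adic valuation of $\binom{a}{p}/a$ to be $-1$. The only difference from the paper is cosmetic (you compute $v_p$ of $\binom{a}{p}/a$ directly, the paper computes $v_p(\binom{a}{p}) = v_p(a)-1$ and then divides).
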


\begin{proof}
Suppose $a \ne 0$ and $p$ is a prime number that divides $a$ and does not divide $m$. Let $v$ be the $p$-adic valuation of $a$. The numbers $a-1, \ldots, a-p+1$ are coprime to $p$, as are the numbers $1, \ldots, p-1$, and so the $p$-adic valuation of $\binom{a}{p}$ is $v-1$. Thus $a$ does not divide $\binom{a}{p}$ in $\bZ[1/m]$, a contradiction.
\end{proof}

\subsection{The main theorem} \label{ss:main}

The following is our main theorem.

\begin{theorem} \label{mainthm1}
Suppose $\IND(m)$ and $\GEN(m)$ hold. Then:
\begin{enumerate}
\item We have a ring isomorphism $\Theta(G, \Omega)[1/m] = \bZ[1/m]^n$ for some $n$.
\item Let $f \colon Y \to X$ be a map of transitive $\sE(\Omega)$-smooth $G$-sets, and let $(a_1, \ldots, a_n)$ be the corresponding element of $\bZ[1/m]^n$. Then each $a_i$ is either~0 or a unit of $\bZ[1/m]$.
\item If $\Theta_{\le 1}(G, \Omega)[1/m]$ is generated by $r$ elements as a $\bZ[1/m]$-module and $p$ is the smallest prime not dividing $m$ then $n \le p^r$.
\end{enumerate}
\end{theorem}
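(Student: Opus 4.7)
The plan is to combine Ekedahl's structure theorem for finitely generated binomial rings with the refined information about basic classes in Proposition~\ref{prop:pi}. By Theorem~\ref{thm:theta-binom} and Corollary~\ref{cor:theta-fg}, $\Theta(G,\Omega)[1/m]$ is a finitely generated binomial ring, so Ekedahl's theorem yields
\[
\Theta(G,\Omega)[1/m] \cong \bZ[1/m_1] \times \cdots \times \bZ[1/m_n]
\]
for some integers $m_i$ and some $n$. Let $\pi_i$ denote the $i$-th projection. Since the ambient ring is a $\bZ[1/m]$-algebra, every prime factor of $m$ must divide $m_i$, giving a canonical inclusion $\bZ[1/m] \subset \bZ[1/m_i]$ inside $\bQ$; the content of (a) is that this inclusion is an equality.

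The crux is to show that the image of every basic class under $\pi_i$ lies in $\bZ[1/m]$. Using $\GEN(m)$ together with the fact that basic classes additively span $\Theta_{\le 1}(G,\Omega)[1/m]$, first choose finitely many basic classes $y_1,\ldots,y_s$ that generate $\Theta_{\le 1}(G,\Omega)[1/m]$ as a $\bZ[1/m]$-module, and set $a_{i,j}=\pi_i(y_j)$. Proposition~\ref{prop:pi}(a) places $\binom{y_j}{n}$ in $\Theta_{\le n}(G,\Omega)[1/m]$, which is the $\bZ[1/m]$-span of products of at most $n$ basic classes; expanding each such basic class as a $\bZ[1/m]$-combination of the $y_k$'s presents $\binom{y_j}{n}$ as a $\bZ[1/m]$-polynomial of degree $\le n$ in $y_1,\ldots,y_s$. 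Applying $\pi_i$ (and noting that a ring homomorphism between binomial rings commutes with $\binom{-}{n}$) shows that the rational numbers $a_{i,j}$ satisfy the hypotheses of Lemma~\ref{lem:main2}, so each $a_{i,j}$ lies in $\bZ[1/m]$; since every basic class is itself a $\bZ[1/m]$-combination of the $y_k$'s, the same holds for all basic classes.

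Statement (a) follows immediately: by Proposition~\ref{prop:filt}(c) the basic classes generate $\Theta(G,\Omega)[1/m]$ as a $\bZ[1/m]$-algebra, so the previous paragraph forces the image of $\pi_i$ into $\bZ[1/m]$; surjectivity of $\pi_i$ then forces $\bZ[1/m_i]=\bZ[1/m]$. For (b), Proposition~\ref{prop:filt}(b) writes $[f]$ as a product of basic classes, so $\pi_i([f])$ is a product of values $\pi_i(y)$ with $y$ basic; each such factor lies in $\bZ[1/m]$ by the previous paragraph, and Proposition~\ref{prop:pi}(b) combined with Lemma~\ref{lem:main3} shows that each factor is either $0$ or a unit of $\bZ[1/m]$, hence so is the product.

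For (c), compose the isomorphism from (a) with componentwise reduction modulo $p$ to obtain a surjective ring homomorphism $\Psi\colon\Theta(G,\Omega)[1/m]\twoheadrightarrow\bF_p^n$ (surjective because $p\nmid m$). Its components $\psi_1,\ldots,\psi_n\colon\Theta(G,\Omega)[1/m]\to\bF_p$ are pairwise distinct (otherwise the image of $\Psi$ would lie in a proper subring of $\bF_p^n$), and each is a ring homomorphism, so determined by its values on any $\bZ[1/m]$-algebra generating set. By Proposition~\ref{prop:filt}(c), the $r$ hypothesized generators of $\Theta_{\le 1}(G,\Omega)[1/m]$ furnish such a set, and since there are at most $p^r$ choices of $r$-tuples in $\bF_p^r$, we obtain $n\le p^r$. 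The principal obstacle is the middle step: one must verify that Proposition~\ref{prop:pi}(a) combined with Proposition~\ref{prop:filt} yields an expression of exactly the controlled degree demanded by Lemma~\ref{lem:main2}, which is what forces the $a_{i,j}$'s into $\bZ[1/m]$; once this number-theoretic input is secured, parts (a) and (b) are essentially formal, and (c) reduces to counting ring maps to $\bF_p$.
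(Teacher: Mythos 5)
Your proposal is correct and follows essentially the same route as the paper: the structure theorem for finitely generated binomial rings, then Proposition~\ref{prop:pi} together with Lemmas~\ref{lem:main2} and~\ref{lem:main3} applied to the images of basic classes under each projection, and a mod-$p$ count for part (c). The only cosmetic difference is that in (c) you count distinct ring homomorphisms to $\bF_p$ directly, whereas the paper exhibits a surjection from $\bF_p[T_1,\ldots,T_r]/(T_i^p-T_i)$ and compares dimensions; these are the same argument.
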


\begin{proof}
The assumptions imply that $\Theta(G, \Omega)[1/m]$ is a finitely generated binomial ring (Theorem~\ref{thm:theta-binom} and Corollary~\ref{cor:theta-fg}). The structure theorem for such rings \cite[Theorem~9]{Xantcha} thus gives a ring isomorphism
\begin{displaymath}
\Theta(G, \Omega)[1/m] = \bZ[1/m_1] \times \cdots \times \bZ[1/m_n]
\end{displaymath}
for positive integers $m_1, \ldots, m_n$, each divisible by $m$.

(a) Consider a ring homomorphism $\mu \colon \Theta(G, \Omega)[1/m] \to \bQ$. Let $x_1, \ldots, x_r$ be basic classes that generate $\Theta_{\le 1}(G, \Omega)[1/m]$ as a $\bZ[1/m]$-module and let $a_i=\phi(x_i)$. By Proposition~\ref{prop:pi}(a), $\binom{a_i}{n}$ is a degree $\le n$ polynomial in $a_1, \ldots, a_r$ with coefficients in $\bZ[1/m]$. Lemma~\ref{lem:main2} thus shows that each $a_i$ belongs to $\bZ[1/m]$. Hence the image of $\mu$ is just $\bZ[1/m]$. This shows that the $m_i$'s in the previous paragraph are all equal to $m$, which proves (a).

(b) One again, consider a ring homomorphism $\mu \colon \Theta(G, \Omega)[1/m] \to \bZ[1/m]$. Let $x$ be a basic class and let $a=\mu(x)$. By Proposition~\ref{prop:pi}(b), $a$ divides $\binom{a}{n}$ in $\bZ[1/m]$ for all $n \ge 1$. Thus $a$ is either~0 or a unit of $\bZ[1/m]$ by Lemma~\ref{lem:main3}. If $f \colon Y \to X$ is a map of transitive $\sE(\Omega)$-smooth sets then $[f]$ is a product of basic classes by Proposition~\ref{prop:filt}(b), and so $\mu(f)$ is also~0 or a unit. The result follows.

(c) Let $x_1, \dots, x_r$ generate $\Theta_{\le 1}(G, \Omega)[1/m]$ as a $\bZ[1/m]$-module. Then the $x_i$'s generate $\Theta(G, \Omega)[1/m]$ as a $\bZ[1/m]$-algebra by Proposition~\ref{prop:filt}. Reducing modulo $p$, the images of $x_1, \ldots, x_r$ generate the ring
\begin{displaymath}
\Theta(G, \Omega)[1/m] \otimes \bF_p \cong \bF_p^n
\end{displaymath}
Since any element $a$ of the ring $\bF_p^n$ satisfies Fermat's little theorem ($a^p=a$), it follows that we have a surjection
\begin{displaymath}
\bF_p[T_1, \ldots, T_r]/(T_i^p-T_i) \to \bF_p^n, \qquad T_i \mapsto x_i.
\end{displaymath}
The left side has dimension $p^r$ as an $\bF_p$-vector space, and so the result follows.
\end{proof}

\begin{remark} \label{rmk:refined}
Suppose $m=m_1 m_2$ where $m_1$ and $m_2$ are coprime. Consider the following refined version of $\IND(m)$:
\begin{description}[align=right,labelwidth=2.75cm,leftmargin=!,font=\normalfont]
\item[$\IND(m_1, m_2)$] For finite $A \subset \Omega$ the index of $G(A)$ in $G[A]$ divides $m_1 m_2^s$ for some $s \ge 0$.
\end{description}
In other words, the primes dividing $m_1$ can appear in the above indices, but they have bounded exponent. Using this condition, we can obtain a refinement of our theorem: assuming $\GEN(m_2)$ and $\IND(m_1, m_2)$, the $a_i$'s in Theorem~\ref{mainthm1}(b) belong to $\bZ[1/m_2]$, i.e., the primes dividing $m_1$ cannot appear in the denominators. This situation actually occurs with certain classes of trees; see \S \ref{ss:ex-details}(c).

We briefly sketch the argument. First, in Proposition~\ref{prop:pi}(a), we find that $\binom{x}{n}$ belongs to $m_1^{-1} \Theta_{\le n}(G, \Omega)[1/m_2]$; this follows since the $d_i$'s appearing there divide $m_1 m_2^s$ for some $s$. Next, we can adapt Lemma~\ref{lem:main2}: if we allow the $P$'s to have cofficients in $m_1^{-1} \bZ[1/m_2]$ then the proof shows that the $x_i$'s belong to $\bZ[1/m_2]$. The proof of Theorem~\ref{mainthm1}(a) now yields the claim.
\end{remark}

We restate the theorem in the $m=1$ case, as the statements simplify somewhat. Note that $\IND(1)$ means that $G(A)=G[A]$ for all finite $A \subset \Omega$.

\begin{corollary} \label{cor:m1}
Suppose $\GEN(1)$ and $\IND(1)$ hold. Then:
\begin{enumerate}
\item We have a ring isomorphism $\Theta(G, \Omega) = \bZ^n$ for some $n$.
\item If $f \colon Y \to X$ is a map of transitive $\sE(\Omega)$-smooth $G$-sets, and $(a_1, \ldots, a_n)$ is the corresponding element of $\bZ^n$ then each $a_i$ belongs to $\{-1, 0, 1\}$.
\item If $\Theta_{\le 1}(G, \Omega)$ has rank $r$ then $n \le 2^r$.
\end{enumerate}
\end{corollary}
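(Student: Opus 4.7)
The plan is to obtain Corollary~\ref{cor:m1} as a direct specialization of Theorem~\ref{mainthm1} with $m=1$, so no new ideas are required; the work lies in unpacking what each statement becomes when $\bZ[1/m]$ is replaced by $\bZ$.

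First, I would observe that $\IND(1)$ is exactly the condition that $G(A) = G[A]$ for every finite $A \subset \Omega$, and $\GEN(1)$ is the condition that $\Theta_{\le 1}(G, \Omega)$ is a finitely generated $\bZ$-module. These are precisely the hypotheses $\IND(m)$ and $\GEN(m)$ of Theorem~\ref{mainthm1} with $m=1$, so the theorem applies. Since $\bZ[1/1] = \bZ$, part (a) of the theorem immediately gives a ring isomorphism $\Theta(G,\Omega) = \bZ^n$, which is statement (a) of the corollary.

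For statement (b), I would apply Theorem~\ref{mainthm1}(b): each coordinate $a_i$ of $[f]$ under the identification $\Theta(G,\Omega) \cong \bZ^n$ is either $0$ or a unit of $\bZ$. Since the units of $\bZ$ are $\{-1, 1\}$, this means $a_i \in \{-1, 0, 1\}$. For statement (c), I would note that when $m=1$ the smallest prime not dividing $m$ is $p=2$, and if $r$ is the rank of $\Theta_{\le 1}(G,\Omega)$ as a $\bZ$-module (equivalently, the minimal number of generators, since any such module is free — or more safely, any generating set works, so we may take $r$ generators), then Theorem~\ref{mainthm1}(c) yields $n \le 2^r$.

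There is no genuine obstacle here; the only small subtlety is the interpretation of ``rank'' in (c). The cleanest reading is that $\Theta_{\le 1}(G,\Omega)$ can be generated by $r$ elements as a $\bZ$-module (which equals the free rank when the module is torsion-free, as it is here by Theorem~\ref{thm:theta-binom} since a binomial ring is $\bZ$-torsion-free). With that reading, the three statements follow by substituting $m=1$ and $p=2$ into the three parts of Theorem~\ref{mainthm1}.
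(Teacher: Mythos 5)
Your proposal is correct and matches the paper's intent exactly: the paper gives no separate proof of this corollary, treating it as the immediate specialization of Theorem~\ref{mainthm1} to $m=1$, with $\bZ[1/1]=\bZ$, units of $\bZ$ equal to $\{-1,1\}$, and smallest prime $p=2$. Your remark that ``rank'' should be read as the number of $\bZ$-module generators (which coincides with the free rank by torsion-freeness) is a fair and accurate gloss.
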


The above result applies to the cases analyzed in \cite{colored} and \cite{homoperm}, and provides a conceptual explanation for why the measures we found there take values in $\{-1,0,1\}$.

\subsection{Regular measures} \label{ss:reg}

A $k$-valued measure $\mu$ for $(G, \Omega)$ is \defn{regular} if $\mu(f)$ is a unit of $k$ whenever $f$ is a map of transitive $\sE(\Omega)$-smooth $G$-sets. Regular measures play an important role in the application to tensor categories, as they often lead to semi-simple categories. There is a universal ring $\Theta^*(G, \Omega)$ for regular measures, obtained by localizing $\Theta(G, \Omega)$ at the classes $[f]$ of the said $f$'s. The following is our main result about this ring.

\begin{theorem}
Suppose $\IND(m)$ and $\GEN(m)$ hold. Then:
\begin{enumerate}
\item We have a ring isomorphism $\Theta^*(G, \Omega)[1/m] = \bZ[1/m]^{\ell}$ for some ${\ell}$.
\item Let $f \colon Y \to X$ be a map of transitive $\sE(\Omega)$-smooth $G$-sets, and let $(a_1, \ldots, a_{\ell})$ be the corresponding element of $\bZ[1/m]^{\ell}$. Then each $a_i$ is a unit of $\bZ[1/m]$.
\item If $\Theta_{\le 1}(G, \Omega)[1/m]$ is generated by $r$ elements as a $\bZ[1/m]$-module and $p$ is the smallest prime not dividing $m$ then $\ell \le (p-1)^r$.
\end{enumerate}
\end{theorem}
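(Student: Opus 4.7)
My plan is to deduce the theorem from Theorem~\ref{mainthm1} by analyzing the localization that defines $\Theta^*$. The starting point is that every $[f]$, with $f \colon Y \to X$ a map of transitive $\sE(\Omega)$-smooth $G$-sets, is a product of basic classes by Proposition~\ref{prop:filt}(b); hence $\Theta^*(G, \Omega)$ is the localization of $\Theta(G, \Omega)$ at the set of basic classes.

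For parts (a) and (b), Theorem~\ref{mainthm1}(a) gives $\Theta(G, \Omega)[1/m] \cong \bZ[1/m]^n$, and by Theorem~\ref{mainthm1}(b) each basic class has coordinates in $\{0\} \cup \bZ[1/m]^\times$. Localizing $\bZ[1/m]^n$ at any such element kills the factors where it vanishes and leaves the remaining factors unchanged, so inverting all basic classes produces $\bZ[1/m]^\ell$, where $\ell$ is the number of factors at which \emph{every} basic class is nonzero; this establishes (a). Part (b) is then immediate: every basic class is a unit in $\Theta^*(G, \Omega)[1/m]$ by construction, and every $[f]$ is a product of basic classes, hence also a unit.

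For part (c), I plan to adapt the Fermat argument from the proof of Theorem~\ref{mainthm1}(c), after first replacing the given $r$ generators by at most $r$ \emph{basic} classes. Concretely, since basic classes span $\Theta_{\le 1}(G, \Omega)[1/m]$ over $\bZ[1/m]$, their images span the $\bF_p$-vector space $\Theta_{\le 1}(G, \Omega)[1/m] \otimes \bF_p$, which has dimension at most $r$; from this spanning set I extract a basis $y_1, \ldots, y_{r'}$ with $r' \le r$ consisting of basic classes. By Proposition~\ref{prop:filt}(c), the $y_i$ generate $\Theta(G, \Omega)[1/m] \otimes \bF_p$, and hence its quotient $\Theta^*(G, \Omega)[1/m] \otimes \bF_p \cong \bF_p^\ell$, as an $\bF_p$-algebra. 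Since each $y_i$ is a basic class and therefore a unit in $\Theta^*(G, \Omega)[1/m]$, its image in $\bF_p^\ell$ is nonzero in every coordinate, so $y_i^{p-1} = 1$ by Fermat. The resulting surjection
\[
\bF_p[T_1, \ldots, T_{r'}]/(T_1^{p-1}-1, \ldots, T_{r'}^{p-1}-1) \twoheadrightarrow \bF_p^\ell, \qquad T_i \mapsto y_i,
\]
has source of $\bF_p$-dimension $(p-1)^{r'} \le (p-1)^r$, yielding the bound. The step requiring most care is this basic-class replacement: with the given (possibly non-basic) generators, a naive Fermat argument using $T_i^p - T_i$ would only reproduce the weaker bound $\ell \le p^r$, and it is essential to pass to basic classes---which are units in $\Theta^*$---to sharpen the exponent base from $p$ to $p-1$.
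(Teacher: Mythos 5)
Your proof is correct and follows essentially the same route as the paper: parts (a) and (b) via the description of $\Theta^*(G,\Omega)[1/m]$ as the localization of $\Theta(G,\Omega)[1/m] \cong \bZ[1/m]^n$ at the basic classes, and part (c) via the Fermat relation $a^{p-1}=1$ and the surjection from $\bF_p[T_1,\ldots,T_{r'}]/(T_i^{p-1}-1)$. Your extra step of extracting, modulo $p$, a basis of at most $r$ \emph{basic} classes is a careful justification of a point the paper glosses over (it takes $r$ arbitrary generators of $\Theta_{\le 1}(G,\Omega)[1/m]$ and asserts their images satisfy $a^{p-1}=1$, which requires them to be units in $\Theta^*$).
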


\begin{proof}
Statements (a) and (b) follow from Theorem~\ref{mainthm1} and the description of $\Theta^*(G, \Omega)$ as a localization of $\Theta(G, \Omega)$. For (c), suppose $x_1, \ldots, x_r$ generate $\Theta_{\le 1}(G, \Omega)[1/m]$. Then the $x_i$'s generate $\Theta(G, \Omega)[1/m]$ as a $\bZ[1/m]$-algebra, and also generate $\Theta^*(G, \Omega)[1/m]$ as a $\bZ[1/m]$-algebra as well, as the latter is a quotient of the former in this case. Reducing modulo $p$, the images of the $x_i$'s generate $\bF_p^{\ell}$. The image of each $x_i$ satisfies the equation $a^{p-1}=1$, and so we have a surjection
\begin{displaymath}
\bF_p[T_1, \ldots, T_r]/(T_i^{p-1}-1) \to \bF^{\ell}
\end{displaymath}
As the domain has dimension $(p-1)^r$, the result follows.
\end{proof}

The above result is particularly interesting when $m$ is odd, for then we obtain the bound $\ell \le 1$, i.e., a regular measure is unique if it exists. We now analyze this situation more closely. We require the following concept: we say that $(G, \Omega)$ is \defn{odd} if whenever $Y \to X$ and $Z \to X$ are maps of transitive $\sE(\Omega)$-smooth $G$-sets, the fiber product $Y \times_X Z$ has an odd number of $G$-orbits. The following simple observation, which holds in complete generality, shows why this is a relevant condition (see also \cite[Proposition~8.7]{regcat}).

\begin{proposition} \label{prop:odd}
An oligomorphic group $(G, \Omega)$ admits a regular $\bF_2$-valued measure if and only if it is odd, in which case the measure is unique.
\end{proposition}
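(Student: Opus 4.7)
The plan is to handle both directions separately and to extract uniqueness along the way. Note first that the candidate $\bF_2$-valued regular measure is completely forced: regularity demands $\mu(f)=1$ for every map $f\colon Y \to X$ of transitive $\sE(\Omega)$-smooth $G$-sets (since $1$ is the only unit of $\bF_2$), and then additivity (Definition~\ref{defn:meas}(b)) forces $\mu(g)\equiv k \pmod 2$ whenever $g\colon Y\to X$ has target $X$ transitive and $Y$ decomposes into $k$ orbits $Y_1,\ldots,Y_k$. This gives uniqueness and simultaneously pins down exactly which formula I must verify in the other direction.

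For the ``only if'' direction, I would suppose such a $\mu$ exists and take maps $Y\to X$ and $Z\to X$ of transitive $\sE(\Omega)$-smooth $G$-sets. Combining base change (Definition~\ref{defn:meas}(d)) with multiplicativity under composition (Definition~\ref{defn:meas}(c)) gives $\mu(Y\times_X Z \to X) = \mu(Y\to X)\cdot \mu(Z\to X) = 1$ in $\bF_2$. Decomposing $Y\times_X Z$ into $G$-orbits $W_1,\ldots,W_k$ and applying additivity yields $1 = \sum_{i=1}^k \mu(W_i\to X) = k \pmod 2$, which is exactly the oddness condition.

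For the ``if'' direction, I would define $\mu(g)$ for an arbitrary map $g\colon Y\to X$ with $X$ transitive by $\mu(g) = (\text{number of }G\text{-orbits of }Y) \pmod 2$, and check the four axioms. Axioms (a) and (b) are immediate, and (c) is easy because both factors equal $1$ on maps of transitive sets. The only nontrivial verification is base change: for $f\colon Y \to X$ and $g\colon X'\to X$ with $X,X'$ transitive, and $f'\colon Y'\to X'$ the pullback, I want $\mu(f) = \mu(f')$. Decomposing $Y = \bigsqcup_{i=1}^k Y_i$ into $G$-orbits, the pullback splits as $Y' = \bigsqcup_i (Y_i\times_X X')$, and oddness guarantees that each $Y_i\times_X X'$ has an odd number of $G$-orbits. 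Summing, the total orbit count of $Y'$ is congruent to $k \pmod 2$, so $\mu(f')=k=\mu(f)$.

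The only subtlety I anticipate is bookkeeping: I need to confirm that the sets appearing (in particular $Y\times_X Z$ and $Y_i\times_X X'$) are genuinely $\sE(\Omega)$-smooth $G$-sets so that oddness may be invoked and the measure axioms may be applied. This is covered by the fact that $\bS(G,\sE(\Omega))$ is closed under fiber products, noted in \S\ref{ss:stab}. Once that is in hand, the argument is short and essentially mechanical.
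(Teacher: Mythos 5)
Your proof is correct and follows essentially the same approach as the paper: both identify the unique candidate measure (forced to be $1$ on transitive maps and extended additively), verify axioms (a)--(c) trivially, and reduce axiom (d) to the oddness condition. The paper states the equivalence of (d) and oddness without elaboration; your argument simply unpacks that equivalence, including the orbit-decomposition needed when $Y$ is not transitive, which is the right detail to supply.
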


\begin{proof}
If $\mu$ is a regular $\bF_2$-valued measure then we must necessarily have $\mu(f)=1$ whenever $f$ is a map of transitive $\sE(\Omega)$-smooth $G$-sets. If we define $\mu$ in this manner and extend it additively then conditions (a), (b), and (c) of Definition~\ref{defn:meas}(a,b,c) hold automatically, while (d) is equivalent to the oddness condition.
\end{proof}

We can now completely determine $\Theta^*[1/m]$ when $m$ is odd.

\begin{theorem} \label{thm:reg}
Suppose $\GEN(m)$ and $\IND(m)$ hold with $m$ odd. Then
\begin{displaymath}
\Theta^*(G, \Omega)[1/m] = \begin{cases}
\bZ[1/m] & \text{if $(G, \Omega)$ is odd} \\
0 & \text{otherwise.} \end{cases}
\end{displaymath}
\end{theorem}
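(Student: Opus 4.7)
The plan is to deduce the theorem as a direct specialization of the preceding structural result, using Proposition~\ref{prop:odd} to determine which of the two possible cases actually occurs. Since $m$ is odd, the smallest prime $p$ not dividing $m$ is $p=2$, so the bound $\ell \le (p-1)^r = 1$ from the previous theorem forces $\Theta^*(G,\Omega)[1/m]$ to be isomorphic to either $0$ or $\bZ[1/m]$. All that remains is to characterize which alternative holds in terms of whether $(G,\Omega)$ is odd.

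To do this, I will show that $\Theta^*(G,\Omega)[1/m] \ne 0$ if and only if $(G,\Omega)$ is odd. For the forward implication, suppose $\Theta^*(G,\Omega)[1/m] = \bZ[1/m]$. Since $m$ is odd, every prime divisor of $m$ is invertible in $\bF_2$, so the natural reduction $\bZ[1/m] \to \bF_2$ is well-defined; composing with the universal regular measure yields a ring homomorphism $\Theta^*(G,\Omega) \to \bF_2$. The units of $\bZ[1/m]$ are (up to sign) monomials in primes dividing $m$, all of which reduce to $1 \in \bF_2$, so this composition is again a regular measure. By Proposition~\ref{prop:odd}, $(G,\Omega)$ is then odd. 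For the converse, if $(G,\Omega)$ is odd, Proposition~\ref{prop:odd} produces a regular $\bF_2$-valued measure, i.e., a ring homomorphism $\Theta^*(G,\Omega) \to \bF_2$; since $m$ is a unit in $\bF_2$, this factors through $\Theta^*(G,\Omega)[1/m]$, which must therefore be nonzero.

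The argument is essentially a bookkeeping chase once the previous theorem and Proposition~\ref{prop:odd} are in hand, so I do not anticipate any serious obstacle. The one point requiring a moment's care is verifying that reduction modulo $2$ actually preserves \emph{regularity} of the measure, rather than merely the ring-homomorphism structure. This works precisely because the units of $\bZ[1/m]$ remain nonzero (and hence units) in $\bF_2$ when $m$ is odd; if $m$ were even this step would fail, which is exactly why the hypothesis on $m$ is essential and why the even case does not reduce to so clean a dichotomy.
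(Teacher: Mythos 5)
Your proposal is correct and follows essentially the same route as the paper: invoke the preceding theorem with $p=2$ to get $\ell\le 1$, then use Proposition~\ref{prop:odd} to decide between $\ell=0$ and $\ell=1$ via the existence of a ring homomorphism $\Theta^*(G,\Omega)[1/m]\to\bF_2$. Your extra check that reduction mod~$2$ preserves regularity is harmless but automatic, since any ring homomorphism out of the localization $\Theta^*(G,\Omega)$ necessarily sends the inverted classes $[f]$ to units.
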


\begin{proof}
We have seen that $\Theta^*(G, \Omega)[1/m]$ is isomorphic to $\bZ[1/m]^{\ell}$ where $\ell$ is either~0 or~1. We have $\ell=1$ if and only if there is a homomorphism $\Theta^*(G, \Omega)[1/m] \to \bF_2$. By Proposition~\ref{prop:odd}, such a homomorphism exists if and only if $(G, \Omega)$ is odd.
\end{proof}

\begin{remark}
Even though there is always at most one regular $\bF_2$-valued measure, the ring $\Theta^*(G, \Omega)$ can be quite complicated in general. For example, in the case of the infinite symmetric group it is the localization of $\bZ[t]$ obtained by inverting the elements $t-n$ for all $n \in \bN$. Thus the conclusion in Theorem~\ref{thm:reg} is quite strong.
\end{remark}

\begin{remark}
Assuming $\GEN(m)$ and $\IND(m)$ with $m$ odd, Theorem~\ref{thm:reg} implies that there is at most one semi-simple $\bC$-linear tensor category associated to $G$ via the theory of \cite{repst}.
\end{remark}

Suppose now that $\GEN(1)$ and $\IND(1)$ hold and $(G, \Omega)$ is odd. We thus have a unique $\bZ$-valued regular measure $\mu$. If $f \colon Y \to X$ is a map of transitive $\sE(\Omega)$-smooth $G$-sets then $\mu(f)=\pm 1$ by Corollary~\ref{cor:m1}. The following proposition computes the sign, and thus completely determines the measure $\mu$.

\begin{proposition}
Maintain the above notation, and let $n$ be number of $G$-orbits on $Y^{(2)}_{/X}$. Then $\mu(f) = (-1)^n$.
\end{proposition}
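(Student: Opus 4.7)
The plan is to compute $\mu\bigl(\binom{x}{2}\bigr)$ in two different ways, where $x = [f] \in \Theta(G, \Omega)$, and then extract the sign of $\mu(f)$ by matching parities. On the one hand, since $\mu$ is a ring homomorphism to the binomial ring $\bZ$, setting $\epsilon := \mu(x) \in \{\pm 1\}$ by Corollary~\ref{cor:m1}, we get $\mu\bigl(\binom{x}{2}\bigr) = \binom{\epsilon}{2}$, which equals $0$ if $\epsilon = 1$ and $1$ if $\epsilon = -1$; in particular $\binom{\epsilon}{2} \equiv (1-\epsilon)/2 \pmod{2}$. On the other hand, I will express $\binom{x}{2}$ as a sum of $n$ classes of transitive $\sE(\Omega)$-smooth maps, each valued $\pm 1$ under $\mu$, so that $\mu\bigl(\binom{x}{2}\bigr) \equiv n \pmod 2$ as well.

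The key combinatorial input is that, under $\IND(1)$, no $G$-orbit $W \subseteq Y^{[2]}_{/X}$ is invariant under the swap $\sigma \colon (y_1, y_2) \mapsto (y_2, y_1)$. Indeed, swap-invariance would produce $g \in G$ with $g(y_1, y_2) = (y_2, y_1)$; letting $A \subset \Omega$ be the (finite) set of coordinates occurring in $y_1$ and $y_2$, such a $g$ lies in $G[A] \setminus G(A)$, contradicting $\IND(1)$. Hence the $G$-orbits on $Y^{[2]}_{/X}$ split into $n$ $\sigma$-pairs $\{W_i, W_i'\}$, where this $n$ matches the count in the proposition, since $G$-orbits on $Y^{(2)}_{/X}$ biject with $\fS_2$-orbits on $G$-orbits of $Y^{[2]}_{/X}$ and every such $\fS_2$-orbit now has size $2$. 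The swap induces $G$-isomorphisms $W_i \cong W_i'$ over $X$, and each pair descends via the quotient $Y^{[2]}_{/X} \to Y^{(2)}_{/X}$ to a single $G$-orbit $V_i$ with $V_i \cong W_i$ over $X$.

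Now invoke Proposition~\ref{prop:bcoeff} with $n = 2$: since every $\fS_2$-stabilizer on the $G$-orbits is trivial, all $d_i$ equal $1$, and the proposition yields
\[
\binom{x}{2} = \sum_{i=1}^n [V_i \to X] \quad \text{in } \Theta(G, \Omega).
\]
Setting $a_i := \mu([V_i \to X]) \in \{\pm 1\}$ by Corollary~\ref{cor:m1}, I obtain $\binom{\epsilon}{2} = \sum_{i=1}^n a_i \equiv n \pmod 2$. Combined with $\binom{\epsilon}{2} \equiv (1-\epsilon)/2 \pmod 2$, this forces $\epsilon = 1$ iff $n$ is even, giving $\mu(f) = (-1)^n$.

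The main obstacle I anticipate is cleanly pinning down the swap-invariance dichotomy via $\IND(1)$: one must identify the correct finite subset $A \subseteq \Omega$ (the $\Omega$-support of the pair) and apply the hypothesis to $G[A]$ versus $G(A)$. Once this is secured, everything else reduces to the trivial observation that a sum of $n$ signs $\pm 1$ has the same parity as $n$.
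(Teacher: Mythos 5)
Your proof is correct and follows essentially the same route as the paper: identify $\binom{x}{2}$ with the class of $Y^{(2)}_{/X}\to X$, apply $\mu$, and compare parities using $\mu = \pm 1$ on transitive classes. The only difference is cosmetic: where you argue directly via $\IND(1)$ that the swap acts freely on the $G$-orbits of $Y^{[2]}_{/X}$ (so all $d_i=1$ in Proposition~\ref{prop:bcoeff}), the paper gets the same integrality by citing the reasoning in Proposition~\ref{prop:pi}, which bounds the $d_i$ by $\#\Aut_G(Z_i)$.
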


\begin{proof}
Let $x=[f] \in \Theta(G, \Omega)$, let $g \colon Y^{(2)}_{/X} \to X$ be the natural map, and let $y=[g]$. We have $\binom{x}{2}=y$ in $\Theta(G, \Omega)$ by Proposition~\ref{prop:bcoeff} and the reasoning in the proof of Proposition~\ref{prop:pi}. Now apply $\mu$ and reduce modulo~2. Letting $a=\mu(f)$, we find $\binom{a}{2}=n$ modulo~2, and so the result follows.
\end{proof}

\section{Translation to Fra\"iss\'e classes} \label{s:struct}

\subsection{Overview}

Oligomorphic groups and Fra\"iss\'e classes are, in many respects, two sides of the same coin. We have worked with oligomorphic groups up to now, since our proofs are easier to write in that language. We now explain our results in the Fra\"iss\'e class language. This perspective is crucial for describing examples and carrying out concrete calculations.

There is also one new ingredient we add to the mix in this section, namely, the notion of a minimal marked structure. The classes of these structures generate $\Theta_{\le 1}(\fF)$ as an additive group. In particular, if there are finitely many minimal marked structures (the ``distal'' condition) then $\GEN(1)$ holds. This gives a purely combinatorial mechanism for verifying this condition.

\subsection{Fra\"iss\'e classes}

We begin by recalling some background on Fra\"iss\'e classes. We refer to \cite[\S 6.2]{repst}, \cite{CameronBook}, or \cite{Macpherson} for additional background.

Let $\fF$ be a class of finite relational structures for some fixed signature. Thus each member of $\fF$ is a finite set $X$ equipped with a family $\{R_i\}_{i \in I}$ of relations of varying arities; the index set $I$ and the arities are specified by the signature. Suppose that $i \colon X \to Y$ and $j \colon X \to X'$ are embeddings of structures in $\fF$. An \defn{amalgamation} of $Y$ and $X'$ over $X$ is a structure $Y'$ in $\fF$ equipped with embeddings $i' \colon X' \to Y'$ and $j' \colon Y \to Y'$ such that $i'j=j'i$ and $Y'=\operatorname{im}(i') \cup \operatorname{im}(j')$. We say that $\fF$ has the \defn{amalgamation property} if at least one amalgmation exists (for any given $i$ and $j$). This is the key condition in the notion of Fra\"iss\'e class:

\begin{definition} \label{defn:fraisse}
The class $\fF$ is a \defn{Fra\"iss\'e class} if the following conditions hold:
\begin{enumerate}
\item $\fF$ has only finitely many isomorphism classes of $n$-element structures\footnote{This is often weakened to simply asking that $\fF$ contains countably many isomorphism classes.}, for each $n$.
\item $\fF$ is hereditary: if $X \in \fF$ and $Y$ embeds into $X$ then $Y \in \fF$.
\item $\fF$ has the amalgamation property. \qedhere
\end{enumerate}
\end{definition}

Assume that $\fF$ is a Fra\"iss\'e class. Then $\fF$ admits a \defn{Fra\"iss\'e limit}. This is a countable homogeneous structure $\Omega$ whose age is equal to $\fF$, and it is unique up to isomorphism. \textit{Homogeneous} means that if $i,j \colon X \to \Omega$ are two embeddings of a finite structure $X$ then there is an automorphism $\sigma$ of $\Omega$ such that $j=\sigma \circ i$. The \defn{age} of $\Omega$ is simply the class of all finite structures that embed into it. Let $G$ be the automorphism group of $\Omega$. It follows easily from homogeneity that the action of $G$ on $\Omega$ is oligomorphic.

For a structure $X \in \fF$, we let $\Omega^{[X]}$ denote the set of embeddings $X \to \Omega$. The homogeneity of $\Omega$ implies that this is a transitive $G$-set. One easily sees that each $\sE(\Omega)$-smooth transitive $G$-set is isomorphic to some $\Omega^{[X]}$. We warn the reader, however, that it is possible for $\Omega^{[X]}$ and $\Omega^{[Y]}$ to be isomorphic $G$-sets even if $X$ and $Y$ are not isomorphic structures\footnote{This happens when $X$ and $Y$ have isomorphic definable closures.}. If $i \colon X \to Y$ is an embedding of finite structures then there is an induced map $i^* \colon \Omega^{[Y]} \to \Omega^{[X]}$ of $G$-sets.

We fix $\fF$, $G$, and $\Omega$ as above for the remainder of \S \ref{s:struct}.

\subsection{Automorphisms} \label{ss:aut}

Consider the following condition on $\fF$:
\begin{description}[align=right,labelwidth=1.75cm,leftmargin=!,font=\normalfont]
\item[$\AUT(m)$] If $X$ is a member of $\fF$ then the order of $\Aut(X)$ divides a power of $m$.
\end{description}
This is exactly the ``bounded automorphism group'' condition appearing in Theorem~\ref{mainthm}(b). We now show that this matches the $\IND(m)$ condition previously considered for $G$.

\begin{proposition} \label{prop:aut}
$\fF$ satisfies $\AUT(m)$ if and only if $(G, \Omega)$ satisfies $\IND(m)$.
\end{proposition}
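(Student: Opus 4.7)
The plan is to prove that both conditions are captured by the same numerical invariant, namely the quantity $[G[A]:G(A)]$ for a finite substructure $A \subset \Omega$.

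First, I would fix a finite subset $A \subset \Omega$ and regard it as a substructure of $\Omega$ (inheriting all relations from $\Omega$). The setwise stabilizer $G[A]$ acts on $A$ by relation-preserving bijections, which gives a group homomorphism $\rho_A \colon G[A] \to \Aut(A)$. The kernel of $\rho_A$ is exactly $G(A)$, so we obtain an injection
\[
G[A]/G(A) \hookrightarrow \Aut(A).
\]
Second, I would observe that $\rho_A$ is surjective: given any $\sigma \in \Aut(A)$, the two embeddings $A \hookrightarrow \Omega$ and $\sigma \colon A \to A \hookrightarrow \Omega$ agree on the abstract structure $A$, so by homogeneity of $\Omega$ there is some $g \in G$ with $g|_A = \sigma$. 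Such a $g$ lies in $G[A]$ and maps to $\sigma$. Hence $[G[A]:G(A)] = \#\Aut(A)$.

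Next, I would translate between the two classes of objects. Every finite $A \subset \Omega$ is a substructure of $\Omega$ and hence, since the age of $\Omega$ equals $\fF$, belongs to $\fF$. Conversely, every $X \in \fF$ embeds into $\Omega$, and we can take $A$ to be the image of any such embedding; then $\Aut(X) \cong \Aut(A)$. With the identification $[G[A]:G(A)] = \#\Aut(A)$ in hand, the equivalence is immediate: $\IND(m)$ says that the left-hand side divides a power of $m$ for every finite $A \subset \Omega$, while $\AUT(m)$ says that the right-hand side divides a power of $m$ for every $X \in \fF$, and these two quantifiers are equivalent through the correspondence $A \leftrightarrow X$.

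There is essentially no obstacle here; the only point that requires a moment's care is the surjectivity of $\rho_A$, which rests squarely on the homogeneity of the Fra\"iss\'e limit $\Omega$. Everything else is unwinding definitions.
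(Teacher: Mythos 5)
Your proposal is correct and follows exactly the paper's argument: both establish the isomorphism $G[A]/G(A) \cong \Aut(A)$ (injectivity from the definition of $G(A)$, surjectivity from homogeneity of $\Omega$) and then identify finite substructures of $\Omega$ with members of $\fF$ via the age. No issues.
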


\begin{proof}
Let $A$ be a finite subset of $\Omega$; we give $A$ the induced relational structure. Every member of $\fF$ is obtained in this way (up to isomorphism). The map $G[A]/G(A) \to \Aut(A)$ is an isomorphism; indeed, it is clearly injective, and the homogeneity of $\Omega$ (giving a map in the opposite direction) implies that it is surjective. The result thus follows.
\end{proof}

\subsection{Separated sets} \label{ss:sep}

We now discuss a notion of separatedness, which figures into the notion of a minimal marked structure. Let $X$ be a member of $\fF$, and let $A$ and $B$ be disjoint subsets of $X$. We say that $A$ and $B$ are \defn{separated} if $X$ is the unique amalgamation of $X \setminus A$ and $X \setminus B$ over $X \setminus (A \cup B)$. Equivalently, $A$ and $B$ are separated if and only if the square
\begin{displaymath}
\xymatrix{
\Omega^{[X]} \ar[r] \ar[d] & \Omega^{[X \setminus A]} \ar[d] \\
\Omega^{[X \setminus B]} \ar[r] & \Omega^{[X \setminus (A \cup B)]} }
\end{displaymath}
is cartesian; indeed, if there an amalgamation other than $X$, then the fiber product would have more than one orbit.

We say that two vertices $x,y \in X$ are \defn{separated} if the corresponding singleton sets are. We now discuss this condition in a little more detail. We say that $x$ and $y$ are \defn{similar} if there is an automorphism of $X$ switching $x$ and $y$ and fixing all other elements. In this case, $x$ and $y$ are not separated: there is an amalgamation of $X \setminus x$ and $X \setminus y$ over $X \setminus \{x,y\}$ where $x$ and $y$ are identified. Suppose now that $x$ and $y$ are dissimilar, and let $X'$ be an amalgamation of $X \setminus x$ and $X \setminus y$ over $X \setminus \{y,x\}$. We can then identify $X'$ with $X$ as a set. If $\{R_i\}$ and $\{R'_i\}$ are the relations on $X$ and $X'$ then we have $R_i(z_1, \ldots, z_{n(i)})=R'_i(z_1, \ldots, z_{n(i)})$ for all $i$ and $z_1, \ldots, z_{n(i)} \in X$, provided that $x$ and $y$ do not both belong to $\{z_1, \ldots, z_{n(i)}\}$. Thus separatedness means that one can conclude $R_i=R'_i$ from this condition.

\begin{example}
We give two examples. (a) If $\fF$ is the class of finite totally ordered sets then $x$ and $y$ are separated if and only if they are not adjacent, i.e., there exists some $a \in X$ such that $x<a<y$ or $y<a<x$ \cite[Example~2.11]{arboreal}. (b) If $\fF$ is the class of finite sets then no pair of elements is separated \cite[Example~2.10]{arboreal} for more details.
\end{example}

We will require the following property of separatedness:

\begin{proposition} \label{prop:sep}
Let $X$ be a structure in $\fF$ and let $A$, $B$, and $C$ be disjoint subsets of $X$. Then the following are equivalent:
\begin{enumerate}
\item $A \cup B$ is separated from $C$ in $X$
\item $A$ is separated from $C$ in $X$, and $B$ is separated from $C$ in $X \setminus A$.
\end{enumerate}
\end{proposition}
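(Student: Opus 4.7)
I would prove the equivalence by working with the cartesian-square reformulation of separatedness recorded just before the proposition: $A$ and $B$ are separated in $X$ if and only if the corresponding square of $G$-sets $\Omega^{[-]}$ is cartesian. The three conditions in question become the three cartesian-square assertions for the diagram
\[
\xymatrix{
\Omega^{[X]} \ar[r] \ar[d] & \Omega^{[X \setminus A]} \ar[d] \ar[r] & \Omega^{[X \setminus (A \cup B)]} \ar[d] \\
\Omega^{[X \setminus C]} \ar[r] & \Omega^{[X \setminus (A \cup C)]} \ar[r] & \Omega^{[X \setminus (A \cup B \cup C)]}
}
\]
where (a) is the outer rectangle, (b1) is the left square, and (b2) is the right square.

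The direction (b) $\Rightarrow$ (a) is then just horizontal pasting of cartesian squares (equivalently, associativity of fiber products), and is immediate.

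For (a) $\Rightarrow$ (b), I would treat the two sub-statements separately, in each case starting from a compatible pair in the relevant pullback. For (b1), suppose $\phi\colon X\setminus A \to \Omega$ and $\psi\colon X\setminus C \to \Omega$ agree on $X\setminus(A\cup C)$. Since $X\setminus(A\cup B\cup C) \subseteq X\setminus(A\cup C)$, the pair $(\phi|_{X\setminus(A\cup B)},\psi)$ is automatically compatible for the outer pullback, so (a) provides a unique $\theta\colon X\to \Omega$ lifting it. One then checks $\theta|_{X\setminus A}=\phi$: both embeddings agree on $X\setminus(A\cup B)$ by construction, and they agree on $B$ because $B\subseteq X\setminus(A\cup C)$ forces $\theta|_B = \psi|_B = \phi|_B$ (using the original compatibility $\phi = \psi$ on $X\setminus(A\cup C)$). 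Since $X\setminus A = (X\setminus(A\cup B))\cup B$, this proves surjectivity; injectivity is automatic from $\Omega^{[X]}$ being transitive. For (b2), suppose $\psi\colon X\setminus(A\cup B)\to \Omega$ and $\tau\colon X\setminus(A\cup C)\to\Omega$ agree on $X\setminus(A\cup B\cup C)$. Using homogeneity of the Fra\"iss\'e limit (equivalently, the extension property for $\Omega$ applied to the inclusion $X\setminus(A\cup C)\hookrightarrow X\setminus C$ in $\fF$), extend $\tau$ to $\tau'\colon X\setminus C\to\Omega$. The pair $(\psi,\tau')$ is then compatible for the (a)-pullback, and its unique lift $\theta\colon X\to\Omega$ yields $\sigma := \theta|_{X\setminus A}$ with $\sigma|_{X\setminus(A\cup B)}=\psi$ and $\sigma|_{X\setminus(A\cup C)}=\tau$. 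Uniqueness of $\sigma$ follows from $X\setminus A=(X\setminus(A\cup B))\cup(X\setminus(A\cup C))$ (using $B\cap C=\emptyset$).

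The main place requiring care is the extension step in (b2), which is the only spot where one invokes a genuinely nontrivial property of $\Omega$ beyond the formal pullback algebra; everywhere else the argument is bookkeeping on set-theoretic restrictions. The rest is routine diagram chasing with the surjective restriction maps among the $\Omega^{[-]}$.
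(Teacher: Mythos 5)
Your proof is correct, and it lives in the same framework as the paper's: the same $2\times 3$ diagram of restriction maps, the reformulation of separatedness as a cartesian square, and the pasting lemma for (b)$\Rightarrow$(a). The difference is in (a)$\Rightarrow$(b): the paper forms the actual pullbacks $D$ and $E$ of the two inner squares, observes that the outer rectangle is then automatically cartesian, so that (a) forces $E=\Omega^{[X]}$ to be transitive, hence its image $D$ in the middle pullback is transitive and must equal $\Omega^{[X\setminus A]}$ --- a three-line orbit-counting argument. You instead verify the universal property by hand with an explicit chase on restrictions; this is longer but has the virtue of isolating exactly where a nontrivial property of $\Omega$ enters, namely the surjectivity of $\Omega^{[X\setminus C]}\to\Omega^{[X\setminus(A\cup C)]}$ used in your extension step for (b2). (Note the paper's argument uses the same fact implicitly, when it passes from transitivity of $E$ to transitivity of $D$.) One small correction: injectivity of $\Omega^{[X]}\to\Omega^{[X\setminus A]}\times_{\Omega^{[X\setminus(A\cup C)]}}\Omega^{[X\setminus C]}$ is not a consequence of $\Omega^{[X]}$ being transitive --- a surjection of transitive $G$-sets can be many-to-one --- but rather of the set-theoretic identity $(X\setminus A)\cup(X\setminus C)=X$, exactly the observation you invoke for uniqueness in the (b2) square; with that fix the argument is complete.
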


\begin{proof}
Consider the square
\begin{displaymath}
\xymatrix{
E \ar[r] \ar[d] & D \ar[r] \ar[d] & \Omega^{[X \setminus (A \cup B)]} \ar[d] \\
\Omega^{[X \setminus C]} \ar[r] & \Omega^{[X \setminus (A \cup C)]} \ar[r] & \Omega^{[X \setminus (A \cup B \cup C)]}
}
\end{displaymath}
where the two squares are cartesian (and so the outer rectangle is cartesian as well). If (a) holds then $E=\Omega^{[X]}$ is transitive, and so $D$ must be transitive as well, and thus equal to $\Omega^{[X \setminus A]}$; thus (b) holds. Conversely, if (b) holds, then clearly $E=\Omega^{[X]}$, and so (a) holds.
\end{proof}

\subsection{Minimal marked structures} \label{ss:marked}

A \defn{marked structure} is a pair $(X,x)$ where $X$ is a member of $\fF$ and $x$ is a point of $X$. We say that an element $y \ne x$ of a marked structure $(X,x)$ is \defn{extraneous} if it is separated from the marked point $x$. We say that a marked structure is \defn{minimal} if there are no extraneous points. We will be interested in the following condition:
\begin{description}[align=right,labelwidth=1.5cm,leftmargin=!,font=\normalfont]
\item[$\FMM$] $\fF$ has finitely many minimal marked structures (up to isomorphism).
\end{description}
We give a few examples:

\begin{example} \label{ex:fmm}
(a) If $\fF$ is the class of finite totally ordered sets then there are four minimal marked structures, namely, the three marked structures with $\le 2$ points, and the three element set where the middle element is marked, and so $\FMM$ holds \cite[Example~2.11]{arboreal}. (b) If $\fF$ is the class of finite trees with valence $\le n$ then $\FMM$ holds \cite[\S 3.4]{arboreal}. (c) If $\fF$ is the class of finite sets then every marked structure is minimal, so $\FMM$ does not hold.
\end{example}

We will require the following consequence of $\FMM$.

\begin{proposition} \label{prop:extra}
Suppose $\FMM$ holds, and let $r$ be the maximal size of a minimal marked structure. If $(X,x)$ is a marked structure with $n$ elements then at least $n-r$ elements of $X$ are extraneous.
\end{proposition}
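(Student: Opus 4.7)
The plan is to reduce $(X,x)$ to a minimal marked substructure by iteratively stripping away extraneous points, and then to use Proposition~\ref{prop:sep} to certify that every point removed during this process was already extraneous in the \emph{original} marked structure $(X,x)$.

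First I would run the following reduction. Set $X_0 = X$, and, so long as $(X_i, x)$ has an extraneous point, choose one such point $y_{i+1}$ and let $X_{i+1} = X_i \setminus \{y_{i+1}\}$ (a member of $\fF$ by heredity). Since $|X_i|$ strictly decreases, the procedure terminates after some number of steps, say $k$, at a marked structure $(X_k, x) =: (M, x)$ with no extraneous points, which is minimal by definition. By $\FMM$ and the definition of $r$, we have $|M| \le r$, so $k = n - |M| \ge n - r$.

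Next, I would prove by induction on $i$ that $\{y_1, \ldots, y_i\}$ is separated from $\{x\}$ in $X$. The case $i=0$ is vacuous. For the inductive step, the choice of $y_{i+1}$ says exactly that $\{y_{i+1}\}$ is separated from $\{x\}$ in $X \setminus \{y_1, \ldots, y_i\}$, and the inductive hypothesis says that $\{y_1,\ldots,y_i\}$ is separated from $\{x\}$ in $X$. Applying the (b)$\Rightarrow$(a) direction of Proposition~\ref{prop:sep} with $A = \{y_1,\ldots,y_i\}$, $B = \{y_{i+1}\}$, $C = \{x\}$, we conclude that $\{y_1,\ldots,y_{i+1}\}$ is separated from $\{x\}$ in $X$.

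Finally, setting $Y = \{y_1, \ldots, y_k\}$, I would apply Proposition~\ref{prop:sep} in the (a)$\Rightarrow$(b) direction, once for each $y \in Y$, with $A = \{y\}$, $B = Y \setminus \{y\}$, $C = \{x\}$. Condition (a) is the statement just established, so we obtain that $\{y\}$ is separated from $\{x\}$ in $X$, i.e., $y$ is extraneous in the original $(X,x)$. Hence at least $k \ge n-r$ points of $X$ are extraneous, as required. I do not foresee a real obstacle; the only subtlety is that a point $y_{i+1}$ is \emph{a priori} extraneous only in the intermediate structure $X_i$, and the two-directional use of Proposition~\ref{prop:sep} is exactly what is needed to transport this back and forth along the reduction.
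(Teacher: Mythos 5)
Your proof is correct and follows essentially the same route as the paper's: both build the sequence $y_1,\ldots,y_k$ by iteratively stripping extraneous points, then use Proposition~\ref{prop:sep} in the direction (b)$\Rightarrow$(a) (inductively) to show the whole set $\{y_1,\ldots,y_k\}$ is separated from $x$ in $X$, and finally in the direction (a)$\Rightarrow$(b) to conclude each individual $y_i$ is separated from $x$ in $X$. The only difference is that you spell out the induction explicitly, whereas the paper compresses it into a single citation of Proposition~\ref{prop:sep}.
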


\begin{proof}
By definition, there is a sequence of distinct elements $y_1, \ldots, y_s$ in $X \setminus x$, with $s \ge n-r$, such that $y_i$ is extraneous in $X \setminus \{y_1, \ldots, y_{i-1}\}$ for all $1 \le i \le s$. By Proposition~\ref{prop:sep}, the set $\{y_1, \ldots, y_s\}$ is separated from $x$ in $X$. That proposition again implies that each $y_i$ is separated from $x$ in $X$, which completes the proof.
\end{proof}

Finally, we observe that $\FMM$ is equivalent to the notion of \defn{distality} introduced by Simon \cite{Simon}. We note that nothing in this paper relies on this equivalence, and we prove it simply to connect our condition with something already in the literature.

\begin{proposition} \label{prop:FMM_distal}
$\fF$ has $\FMM$ if and only if $\Omega$ is distal.
\end{proposition}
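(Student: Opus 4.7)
\medskip

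\noindent\textbf{Overall idea.} A minimal marked structure $(X,x)$ records, in an irredundant way, the ``data'' determining the type of $x$ in $X$. So $\FMM$ should correspond exactly to a uniform finitary bound on the parameters needed to pin down a $1$-type over a finite set, which is the content of distality for a homogeneous structure. The plan is to go through the distal cell decomposition characterization of distality and translate between the two formulations.

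\medskip

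\noindent\textbf{Reformulation of distality.} Combining Simon's characterization of distality via strong honest definitions (distal cell decompositions) with the quantifier elimination in $\Omega$ and the finiteness of each $n$-element isomorphism class count in $\fF$, distality of $\Omega$ is equivalent to the statement: there exists an integer $r$ such that for every $b \in \Omega$ and every finite $A \subset \Omega$, some subset $A_0 \subseteq A$ of size at most $r$ satisfies $\type(b/A_0) \vdash \type(b/A)$.

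\medskip

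\noindent\textbf{The two directions.} For $\FMM \Rightarrow$ distal, let $r$ be the maximum size of a minimal marked structure. Given $b$ and finite $A$, view $(A \cup \{b\},b)$ as a marked structure and iteratively peel off extraneous points using Propositions~\ref{prop:sep} and~\ref{prop:extra} to obtain $A_0 \subseteq A$ with $|A_0| \le r-1$ such that $A \setminus A_0$ is separated from $\{b\}$ in $A \cup \{b\}$. Separatedness says the amalgamation over $A_0$ realizing this configuration is the unique one, which by homogeneity of $\Omega$ translates to $\type(b/A_0) \vdash \type(b/A)$. For the converse, if $\FMM$ fails pick minimal marked structures $(X_n,x_n)$ with $|X_n| \to \infty$; by Ramsey together with compactness inside the monster model of $\mathrm{Th}(\Omega)$ I extract an order-indiscernible sequence $(a_i)_{i \in \bQ}$ and a point $b$ such that every finite sub-configuration $(\{b\} \cup \{a_{i_1}, \ldots, a_{i_k}\},\, b)$ is minimally marked at $b$. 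Then no $a_j$ is separated from $b$ over the other $a_i$'s, and the fiber-product description of separatedness from \S\ref{ss:sep} produces, after removing $a_j$ from the sequence, two distinct indiscernible-over-$b$ completions of the remaining sequence---precisely the configuration forbidden by distality.

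\medskip

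\noindent\textbf{Main obstacle.} The delicate step is the reverse direction: one must ensure that the Ramsey/compactness extraction preserves minimality of \emph{every} finite approximating marked structure, so that the resulting failure of separation at every level genuinely witnesses non-distality over the singleton $b$, rather than merely producing a configuration whose failure of uniqueness of amalgamation is witnessed only by an element outside the sequence $(a_i)$.
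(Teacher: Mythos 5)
Your reformulation of distality and your forward direction are essentially the paper's proof. The paper cites the characterization of distality from Onshuus--Simon directly (condition $\DIS$: a uniform bound $k$ such that $\type(a/A_0)\vdash\type(a/A)$ for some $A_0\subseteq A$ with $|A_0|\le k$), observes that by oligomorphicity a bound on $|A_0|$ is the same as a finite list of possible structures $A_0$, and then translates ``$\type(a/A_0)\vdash\type(a/A)$'' into the uniqueness of the amalgamation of $A_0\cup a$ and $A$ over $A_0$ --- i.e., into $A\setminus A_0$ being separated from $a$ in $A\cup a$. Your peeling argument via Propositions~\ref{prop:sep} and~\ref{prop:extra} is exactly how one sees that $\FMM$ supplies the required $A_0$.

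The gap is in your converse, and it is one you half-acknowledge: the Ramsey/compactness extraction of an indiscernible sequence with every finite subconfiguration minimally marked is not justified (minimality of a marked structure is not preserved by passing to subconfigurations or by Ramsey thinning, and producing \emph{two indiscernible} completions from a mere failure of unique amalgamation requires further work in the monster model). But this entire detour is unnecessary: you have already asserted, as a biconditional, that distality is equivalent to the bounded-$A_0$ condition, so the converse should be run through the same combinatorial dictionary as the forward direction. Concretely, if $\FMM$ fails, choose minimal marked structures $(X_n,x_n)$ with $|X_n|\to\infty$ and set $A=X_n\setminus x_n$. If some $A_0\subseteq A$ satisfied $\type(x_n/A_0)\vdash\type(x_n/A)$, then $A\setminus A_0$ would be separated from $x_n$ in $X_n$, and Proposition~\ref{prop:sep} would make every point of $A\setminus A_0$ extraneous; minimality forces $A_0=A$. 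Hence no uniform bound $r$ on $|A_0|$ exists and $\Omega$ is not distal. No indiscernible sequences are needed, and the ``main obstacle'' you identify disappears. (If your concern is that the bounded-$A_0$ condition might only be a consequence of distality rather than equivalent to it, note that the paper takes this equivalence from \cite[Definition 2.34]{OnSim}; that reference, not a cell-decomposition argument, is what should carry the model-theoretic weight here.)
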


\begin{proof}
We recall that distality for a homogeneous structure $\Omega$ is equivalent to the following condition \cite[Definition 2.34]{OnSim}:
\begin{description}[align=right,labelwidth=1.5cm,leftmargin=!,font=\normalfont]
\item[$\DIS$] there is an integer $k$ such that for any finite set $A \subset \Omega$ and singleton $a\in \Omega$, there is $A_0 \subseteq A$ of size $\leqslant k$ such that $\type(a/A_0) \vdash \type(a/A)$, meaning that whenever $\type(a'/A_0) = \type(a/A_0)$ we have $\type(a'/A) = \type(a/A)$.
\end{description}
Due to oligomorphicity, we can instead simply ask that there exists a finite collection $\{A_0\}$ of finite structures as above.
	
The type $\type(x/X)$ corresponds to the $G(X)$-orbit of $x$ inside $\Omega$; this orbit is indexed by a marked structure $(X \cup x,x)$, where $X\cup x$ denotes the structure induced on a subset $X \cup x \subset \Omega$. Therefore the conclusion ``$\type(a/A_0) \vdash \type(a/A)$'' of $\DIS$ can be reformulated as ``the projection $p \colon \Omega^{[A]} \rightarrow \Omega^{[A_0]}$ induces a bijection between the orbits $G(A) \cdot a$ and $G(A_0)\cdot a$''. This condition is equivalent to the uniqueness of the amalgamation:
\begin{displaymath}
		\xymatrix{
			A \cup a & A \ar[l] \\
			A_0 \cup a \ar[u] & A_0 \ar[l] \ar[u] }
\end{displaymath}
Finally, we see that $\DIS$ is equivalent to 
\begin{description}[align=right,labelwidth=1.5cm,leftmargin=!,font=\normalfont]
\item[$\DIS'$] there are finitely many structures $\{A_0\}$ such that for any one-point extension $A \rightarrow A\cup a$, there is $A_0$ such that the amalgamation of $A_0 \cup a$ and $A$ over $A_0$ is unique.
\end{description}
As $\DIS'$ is equivalent to $\FMM$, the result follows.
\end{proof}

\subsection{Measures} \label{ss:Fmeas}

The next definition comes from \cite[\S 6.2]{repst}.

\begin{definition} \label{defn:Fmeas}
A \defn{measure} on $\fF$ valued in a commutative ring $k$ is a rule $\mu$ assigning to each embedding $i \colon X \to Y$ of structures in $\fF$ a quantity $\mu(i)$ in $k$ such that the following conditions hold:
\begin{enumerate}
\item $\mu(i)=1$ if $i$ is an isomorphism.
\item If $i \colon X \to Y$ and $j \colon Y \to Z$ are composable embeddings then $\mu(j \circ i)=\mu(j) \cdot \mu(i)$.
\item Let $i \colon X \to Y$ and $j \colon X \to X'$ be embeddings, and let $(Y'_{\alpha}, i'_{\alpha}, j'_{\alpha})$ for $1 \le \alpha \le n$ be the various amalgamations. Then $\mu(i)=\sum_{\alpha=1}^n \mu(i'_{\alpha})$. \qedhere
\end{enumerate}
\end{definition}

As in the group case, there is a universal measure valued in a ring $\Theta(\fF)$. Each embedding $i \colon X \to Y$ as above defines a class $[i]$ in $\Theta(\fF)$, and these classes satisfy relations analogous to (a)--(c) above. In \cite[\S 6.9]{repst}, we show that measures for $G$ and measures for $\fF$ correspond. The following is the precise statement:

\begin{proposition} \label{prop:Theta-isom}
We have a natural ring isomorphism $\Theta(\fF) = \Theta(G, \Omega)$. Under this isomorphism, the class of an embedding $i \colon X \to Y$ corresponds to the class of the $G$-map $i^* \colon \Omega^{[Y]} \to \Omega^{[X]}$.
\end{proposition}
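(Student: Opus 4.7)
The plan is to build mutually inverse ring homomorphisms $\Phi \colon \Theta(\fF) \to \Theta(G,\Omega)$ and $\Psi \colon \Theta(G,\Omega) \to \Theta(\fF)$ using the universal properties of the two rings, with the correspondence $[i] \leftrightarrow [i^*]$ on generators. To construct $\Phi$, I would define a $\Theta(G,\Omega)$-valued measure $\nu$ on $\fF$ by $\nu(i) = [i^*]$ for each embedding $i \colon X \to Y$ in $\fF$. Axiom~(a) of Definition~\ref{defn:Fmeas} is immediate, and axiom~(b) follows from contravariant functoriality $(j \circ i)^* = i^* \circ j^*$ combined with multiplicativity of the universal measure (Definition~\ref{defn:meas}(c)).

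The crucial step is verifying axiom~(c). Given embeddings $i \colon X \to Y$ and $j \colon X \to X'$ with amalgamations $Y'_1, \ldots, Y'_n$, homogeneity of $\Omega$ yields a $G$-equivariant decomposition
\[
\Omega^{[Y]} \times_{\Omega^{[X]}} \Omega^{[X']} \;\cong\; \bigsqcup_{\alpha=1}^n \Omega^{[Y'_\alpha]}.
\]
Indeed, a point of the fiber product is a pair of compatible embeddings of $Y$ and $X'$ into $\Omega$ agreeing on the image of $X$; the union of the images (with induced structure from $\Omega$) realizes an amalgamation of $Y$ and $X'$ over $X$, and by homogeneity each isomorphism type contributes a single $G$-orbit. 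Combining this decomposition with additivity (Definition~\ref{defn:meas}(b)) and base change (Definition~\ref{defn:meas}(d)) for the universal $G$-set measure recovers axiom~(c) of Definition~\ref{defn:Fmeas} for $\nu$, and the universal property of $\Theta(\fF)$ then produces $\Phi$.

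For the reverse map $\Psi$, I would construct a $\Theta(\fF)$-valued measure $\mu$ on $(G,\Omega)$. Every transitive $\sE(\Omega)$-smooth $G$-set is isomorphic to some $\Omega^{[A]}$ with $A \in \fF$, and every $G$-map $\Omega^{[B]} \to \Omega^{[A]}$ arises as $i^*$ for some embedding $i \colon A \to B$. Set $\mu(f_k) = [i_k] \in \Theta(\fF)$ for each $G$-orbit in $Y$ and extend additively. Axioms~(a)--(c) of Definition~\ref{defn:meas} translate directly from the corresponding properties of $[i]$ under the assignment $i \mapsto i^*$, and the base change axiom~(d) is exactly axiom~(c) of Definition~\ref{defn:Fmeas} applied through the amalgamation decomposition established above. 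The universal property of $\Theta(G,\Omega)$ then produces $\Psi$, and $\Phi$ and $\Psi$ are mutually inverse by inspection on the generating classes $[i]$ and $[i^*]$, which generate by Proposition~\ref{prop:filt}(c).

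The main obstacle is the well-definedness of $\Psi$: it is possible for non-isomorphic embeddings $i \colon A \to B$ and $i' \colon A' \to B'$ to yield isomorphic $G$-maps $i^* \cong (i')^*$, exactly when the pairs $(A,A')$ and $(B,B')$ have matching definable closures (cf.\ the footnote after Proposition~\ref{prop:Theta-isom}). To show $[i] = [i']$ in $\Theta(\fF)$, one passes to the common definable closures $\widehat{A}$ and $\widehat{B}$; since the amalgamation of $\widehat{A}$ and $B$ over $A$ (respectively $\widehat{A}$ and $B'$ over $A'$) is unique, axiom~(c) of Definition~\ref{defn:Fmeas} forces $[i]$ and $[i']$ each to equal the common class $[\widehat{A} \to \widehat{B}]$. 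Once this bookkeeping is in place the rest of the argument is formal.
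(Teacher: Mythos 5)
Your argument is correct in outline, but note that the paper does not actually prove this proposition: it is imported wholesale from \cite[\S 6.9]{repst}, so your self-contained construction is doing work the paper delegates to a reference. What you write is essentially the standard argument one would find there: the dictionary ``amalgamations of $Y$ and $X'$ over $X$'' $\leftrightarrow$ ``$G$-orbits on $\Omega^{[Y]} \times_{\Omega^{[X]}} \Omega^{[X']}$'' (which the paper itself uses implicitly in \S 3.4 when characterizing separatedness via cartesian squares) converts axiom (c) of Definition~\ref{defn:Fmeas} into the conjunction of axioms (b) and (d) of Definition~\ref{defn:meas}, and multiplicativity matches multiplicativity. You also correctly isolate the one genuine subtlety, the well-definedness of $\Psi$, and your fix via definable closures is the right one: since $G(A)=G(\widehat{A})$, the restriction $\Omega^{[\widehat{A}]}\to\Omega^{[A]}$ is an isomorphism of $G$-sets, so the amalgamation of $\widehat{A}$ and $B$ over $A$ is unique and axiom (c) collapses $[A\to B]$ onto $[\widehat{A}\to\widehat{B}]$. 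Two small wording points: your statement that ``every $G$-map $\Omega^{[B]}\to\Omega^{[A]}$ arises as $i^*$ for some embedding $i\colon A\to B$'' is literally false (one may need to replace $B$ by $\widehat{B}$ before such an $i$ exists, e.g.\ when $A$ lies in $\mathrm{dcl}(B)\setminus B$), though your final paragraph effectively repairs this; and the reduction of $[A\to B]$ to $[\widehat A\to\widehat B]$ takes two applications of the axioms (first amalgamate with $\widehat A$, then absorb $[B\cup\widehat A\to\widehat B]=1$) rather than the single step you indicate. Neither affects the correctness of the approach.
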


We can now translate our binomiality result into purely combinatorial terms:

\begin{corollary}
If $\AUT(m)$ holds then $\Theta(\fF)[1/m]$ is a binomial ring.
\end{corollary}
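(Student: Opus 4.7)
The plan is to simply assemble three results that have already been proved in the paper; no new work is needed. First, I would invoke Proposition~\ref{prop:aut} to translate the combinatorial hypothesis $\AUT(m)$ on the Fra\"iss\'e class $\fF$ into the group-theoretic hypothesis $\IND(m)$ on the associated oligomorphic group $(G,\Omega)$, where $\Omega$ is the Fra\"iss\'e limit of $\fF$ and $G=\Aut(\Omega)$.

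Next, I would apply Theorem~\ref{thm:theta-binom} to conclude that $\Theta(G,\Omega)[1/m]$ is a binomial ring. Finally, Proposition~\ref{prop:Theta-isom} supplies a natural ring isomorphism $\Theta(\fF)\cong \Theta(G,\Omega)$, which remains an isomorphism after inverting $m$, so $\Theta(\fF)[1/m]\cong \Theta(G,\Omega)[1/m]$ is binomial.

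There is no real obstacle here; the point of the corollary is merely to restate Theorem~\ref{thm:theta-binom} on the Fra\"iss\'e side once the dictionary between the two languages (Propositions~\ref{prop:aut} and~\ref{prop:Theta-isom}) has been set up. The only thing to double-check is that localization at $m$ commutes with the isomorphism in Proposition~\ref{prop:Theta-isom}, which is automatic since it is an isomorphism of rings.
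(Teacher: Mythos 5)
Your proposal is correct and matches the paper's proof, which simply cites Proposition~\ref{prop:aut} and Theorem~\ref{thm:theta-binom} (with the isomorphism of Proposition~\ref{prop:Theta-isom} used implicitly). Nothing further is needed.
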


\begin{proof}
This follows from Theorem~\ref{thm:theta-binom} and Proposition~\ref{prop:aut}.
\end{proof}

\subsection{The filtration on $\Theta(\fF)$}

We say that an embedding $i \colon X \to Y$ is a \defn{one-point extension} if $\# Y=1+\# X$. Every embedding factors into a sequence of one-point extensions, and so the classes of one-point extensions generate $\Theta(\fF)$ by Definition~\ref{defn:Fmeas}(b). A marked structure $(X,x)$ defines a one-point extension $X \setminus x \to X$, and every one-point extension is isomorphic to one of this form. We let $[X,x]$ be the class in $\Theta(\fF)$ of the associated one-point extension. Thus the classes of marked structures generate $\Theta(\fF)$ as a ring.

We define $\Theta_{\le 1}(\fF)$ be the $\bZ$-submodule of $\Theta(\fF)$ generated by the classes of marked structures. We say that $\fF$ satisfies $\GEN(m)$ if $\Theta_{\le 1}(\fF)[1/m]$ is finite generated as a $\bZ[1/m]$-module. The isomorphism from Proposition~\ref{prop:Theta-isom} is compatible with filtrations, i.e., $\Theta_{\le 1}(\fF)$ maps isomorphically to $\Theta_{\le 1}(G, \Omega)$, and so $\GEN(m)$ holds for $\fF$ if and only if it holds for $(G, \Omega)$. The following result is the main reason we care about minimal marked structures:

\begin{proposition} \label{prop:min-gen}
The minimal marked structures generated $\Theta_{\le 1}(\fF)$ as a $\bZ$-module. In particular, if $\FMM$ holds then $\fF$ satisfies $\GEN(1)$.
\end{proposition}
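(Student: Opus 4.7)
The plan is to induct on $\# X$ and show that every class $[X,x]$ in $\Theta_{\le 1}(\fF)$ equals a $\bZ$-linear combination of classes of minimal marked structures. If $(X,x)$ is already minimal, there is nothing to do. Otherwise, by definition there exists an extraneous point $y \ne x$, meaning $\{y\}$ is separated from $\{x\}$ in $X$.

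The key move is to apply the amalgamation relation of Definition~\ref{defn:Fmeas}(c) to the pair of inclusions $i \colon X \setminus \{x,y\} \hookrightarrow X \setminus y$ and $j \colon X \setminus \{x,y\} \hookrightarrow X \setminus x$. Any amalgamation of $X \setminus y$ and $X \setminus x$ over $X \setminus \{x,y\}$ has underlying set $X$, and by separatedness of $\{x\}$ and $\{y\}$ the only such amalgamation is $X$ itself. The amalgamation axiom then yields the single-term relation
\begin{displaymath}
[X \setminus y, x] \;=\; [X, x]
\end{displaymath}
in $\Theta(\fF)$ (both sides being the class of the one-point extension obtained by adjoining $x$). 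Since $\# (X \setminus y) = \# X - 1$, the inductive hypothesis applies to the left side, and we conclude that $[X,x]$ is a $\bZ$-linear combination of classes of minimal marked structures. This shows that such classes generate $\Theta_{\le 1}(\fF)$ as a $\bZ$-module.

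For the "in particular" part, one needs only to observe that $[X,x]$ depends solely on the isomorphism class of $(X,x)$: an isomorphism of marked structures induces an isomorphism of the associated one-point extensions, and by Definition~\ref{defn:Fmeas}(a,b) isomorphic embeddings have equal classes. Hence if $\FMM$ holds, the $\bZ$-module $\Theta_{\le 1}(\fF)$ is generated by the finitely many classes arising from minimal marked structures, so $\GEN(1)$ holds.

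I do not anticipate a real obstacle here; the main care is in bookkeeping. In particular, one must correctly read the amalgamation axiom as reducing the summation to a single term precisely when separatedness holds, and one must check that removing the extraneous $y$ does not alter the marked point $x$ nor the induced structure on the one-point extension on either side of the equality. Both are immediate from the definitions.
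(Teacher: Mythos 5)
Your proposal is correct and follows the same route as the paper: the identity $[X,x]=[X\setminus y,x]$ for an extraneous point $y$, obtained from the amalgamation axiom together with the uniqueness of the amalgamation guaranteed by separatedness, followed by induction on $\#X$. The paper states this identity and cites an external reference for the verification you carry out explicitly; your added detail (including the observation that classes depend only on isomorphism type, which gives the $\FMM\Rightarrow\GEN(1)$ implication) is accurate.
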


\begin{proof}
If $y$ is extraneous in the markested structure $(X,x)$ then $[X,x]=[X \setminus y, x]$ holds in $\Theta(\fF)$. This follows immediately from the definition of extraneous and axiom Definition~\ref{defn:Fmeas}(a); see \cite[Proposition~2.9]{arboreal} for details. The result thus follows.
\end{proof}

\begin{example} \label{ex:FMM-vs-Fin}
Let $\fF$ be the class of finite sets. Then $\Theta(\fF)=\bZ[t]$. If $X$ is an $n$-element marked set then $[X]$ is identified with $t-n+1$ under this isomorphism. In particular, $\Theta_{\le 1}(\fF)$ consists of those elements of the form $a+bt$, and is therefore a finitely generated $\bZ$-module. We therefore see that $\fF$ satisfies $\GEN(1)$ but does not satisfy $\FMM$.
\end{example}

\begin{remark}
It follows from Proposition~\ref{prop:min-gen} that the classes of minimal marked structures generated $\Theta(\fF)$ as a ring. In \cite{thesis} a presentation for $\Theta(\fF)$ is given in terms of these generators; see also \cite[\S 2.6]{arboreal}.
\end{remark}

\subsection{The main theorem}

We now reformulate Theorem~\ref{mainthm1} in terms of $\fF$.

\begin{theorem} \label{mainthm2}
Suppose $\fF$ satisfies $\AUT(m)$ and $\GEN(m)$. Then the conclusions of Theorem~\ref{mainthm1} apply to $\Theta(\fF)$.
\end{theorem}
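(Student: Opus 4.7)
The plan is to deduce Theorem~\ref{mainthm2} from Theorem~\ref{mainthm1} by translating all of the hypotheses and conclusions across the dictionary between Fra\"iss\'e classes and their automorphism groups. The key input is that all of the relevant invariants of $\fF$ agree with the corresponding invariants of the oligomorphic group $(G, \Omega)$ acting on the Fra\"iss\'e limit, so nothing new has to be proved; we just have to assemble the pieces already in place.

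First I would let $\Omega$ be the Fra\"iss\'e limit of $\fF$ and $G = \Aut(\Omega)$, so that $(G, \Omega)$ is an oligomorphic group. Proposition~\ref{prop:aut} shows that $\AUT(m)$ for $\fF$ is equivalent to $\IND(m)$ for $(G, \Omega)$. Next, Proposition~\ref{prop:Theta-isom} gives a natural ring isomorphism $\Theta(\fF) \cong \Theta(G, \Omega)$. As noted in the discussion preceding Proposition~\ref{prop:min-gen}, this isomorphism is compatible with the degree filtrations, i.e., it restricts to an isomorphism $\Theta_{\le 1}(\fF) \cong \Theta_{\le 1}(G, \Omega)$. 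Consequently $\GEN(m)$ for $\fF$ is equivalent to $\GEN(m)$ for $(G, \Omega)$.

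With both hypotheses translated, Theorem~\ref{mainthm1} applies to $(G, \Omega)$ and gives all three conclusions for $\Theta(G, \Omega)[1/m]$. Transporting these back along the isomorphism of Proposition~\ref{prop:Theta-isom} yields the corresponding statements for $\Theta(\fF)[1/m]$: part~(a) becomes the ring isomorphism $\Theta(\fF)[1/m] \cong \bZ[1/m]^n$; part~(b) asserts that the image of the class $[i] \in \Theta(\fF)$ of an embedding $i \colon X \to Y$ is componentwise either $0$ or a unit, which is correct because $[i]$ corresponds to the class of the $G$-map $i^* \colon \Omega^{[Y]} \to \Omega^{[X]}$ of transitive $\sE(\Omega)$-smooth $G$-sets; and part~(c) carries over directly because the filtration isomorphism identifies the minimal number of $\bZ[1/m]$-module generators on the two sides.

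There is no essential obstacle here; the only thing to be a little careful about is that the classes $[i]$ of embeddings in $\fF$ correspond precisely to classes of maps of transitive $\sE(\Omega)$-smooth $G$-sets under the isomorphism of Proposition~\ref{prop:Theta-isom}, which is exactly what part~(b) requires. This is already built into the statement of that proposition, so the translation is immediate.
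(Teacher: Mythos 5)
Your proposal is correct and matches the paper's proof: both translate $\AUT(m)$ to $\IND(m)$ via Proposition~\ref{prop:aut}, use the filtration-compatible isomorphism $\Theta(\fF) \cong \Theta(G,\Omega)$ of Proposition~\ref{prop:Theta-isom} to transfer $\GEN(m)$, and then invoke Theorem~\ref{mainthm1}. You simply spell out the transport of the individual conclusions in a bit more detail than the paper does.
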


\begin{proof}
We have seen that the conditions $\AUT(m)$ and $\GEN(m)$ on $\fF$ correspond to the conditions $\IND(m)$ and $\GEN(m)$ on $(G, \Omega)$. Thus the result follows from Theorem~\ref{mainthm1} and the isomorphism $\Theta(\fF)=\Theta(G, \Omega)$.
\end{proof}

Since $\FMM$ implies $\GEN(1)$, the conclusions of the theorem apply if we assume $\fF$ satisfies $\AUT(m)$ and $\FMM$. This yields Theorem~\ref{mainthm} from the introduction. One can translate the results from \S \ref{ss:reg} on regular measures to the present setting as well. In particular, Theorem~\ref{thm:reg} yields Theorem~\ref{regthm}.

\subsection{Joins}

We now discuss a general construction that can be used to produce many Fra\"iss\'e classes satisfying the conditions of Theorem~\ref{mainthm2}. Suppose $\fF$ and $\fF'$ are two classes of finite relational structures. We define $\fF \ast \fF'$ to be the class of all finite sets simultaneously equipped with structure from $\fF$ and structure from $\fF'$. For example, if $\fF$ is the class of finite graphs and $\fF'$ is the class of finite totally ordered sets then $\fF \ast \fF'$ is the class of finite graphs with a total order on the vertex set; there is no connection between the graph structure and order. If $\fF$ and $\fF'$ are both Fra\"iss\'e classes satisfying the strong amalgamation property then $\fF \ast \fF'$ is also a Fra\"iss\'e class satisfying strong amalgamation \cite[\S 3.9]{CameronBook}.

\begin{proposition} \label{prop:star}
Suppose $\fF$ and $\fF'$ are Fra\"iss\'e classes with strong amalgamation.
\begin{enumerate}
\item If either $\fF$ or $\fF'$ satisfies $\AUT(m)$ then so does $\fF \star \fF'$.
\item If $\fF$ and $\fF'$ both satisfy $\FMM$ then so does $\fF \star \fF'$.
\end{enumerate}
\end{proposition}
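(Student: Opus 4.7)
My plan is to handle the two parts separately, reducing each to how $\fF$- and $\fF'$-structures on a common underlying set interact.

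For (a), I would note that an automorphism of $X \in \fF \star \fF'$ is by definition a permutation of the underlying set that preserves both reducts $X_{\fF}$ and $X_{\fF'}$, so
\begin{displaymath}
\Aut_{\fF \star \fF'}(X) = \Aut_{\fF}(X_{\fF}) \cap \Aut_{\fF'}(X_{\fF'})
\end{displaymath}
inside the finite symmetric group on the underlying set. Thus $\Aut_{\fF \star \fF'}(X)$ embeds into $\Aut_{\fF}(X_{\fF})$; by Lagrange's theorem, if $\fF$ satisfies $\AUT(m)$ then $\# \Aut_{\fF \star \fF'}(X)$ divides a power of $m$, and the symmetric argument handles the case of $\fF'$.

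For (b), the key claim I would establish first is: if $y \ne x$ is a point of a marked structure $(X, x) \in \fF \star \fF'$ that is extraneous in both the $\fF$-reduct $(X_{\fF}, x)$ and the $\fF'$-reduct $(X_{\fF'}, x)$, then $y$ is extraneous in $(X, x)$ itself. This should follow by taking reducts of any $\fF \star \fF'$-amalgamation of $X \setminus \{y\}$ and $X \setminus \{x\}$ over $X \setminus \{x, y\}$: its $\fF$-reduct is an $\fF$-amalgamation, which by extraneousness of $y$ in the $\fF$-reduct must equal $X_{\fF}$ with identity inclusions; similarly its $\fF'$-reduct must equal $X_{\fF'}$. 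The two reducts share a common underlying set, which is thereby forced to coincide with that of $X$, and both structures on it agree with those of $X$.

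Granting this, let $r$ and $r'$ be the maximum sizes of a minimal marked structure in $\fF$ and $\fF'$ respectively, finite by $\FMM$. For a hypothetical minimal marked structure $(X, x) \in \fF \star \fF'$ of size $n$, let $E$ and $E'$ be the sets of points of $X$ extraneous in the $\fF$- and $\fF'$-reducts. Proposition~\ref{prop:extra} gives $|E| \ge n - r$ and $|E'| \ge n - r'$; the key claim combined with minimality of $(X, x)$ forces $E \cap E' = \emptyset$; and $E, E' \subseteq X \setminus \{x\}$. Therefore
\begin{displaymath}
2n - r - r' \le |E| + |E'| = |E \cup E'| \le n - 1,
\end{displaymath}
giving $n \le r + r' - 1$. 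Since $\fF \star \fF'$ is itself a Fra\"iss\'e class (by the cited result, which uses strong amalgamation of both factors), it has only finitely many isomorphism classes of structures of any given size, so this size bound yields $\FMM$ for $\fF \star \fF'$. The main subtlety to keep in mind is conceptual: only the forward implication ``extraneous in both reducts $\Rightarrow$ extraneous in $\fF \star \fF'$'' is needed, not a converse, which would have required grappling with potentially non-strong amalgamations within the factors.
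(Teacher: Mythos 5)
Your proof is correct and follows essentially the same route as the paper: part (a) is the identical intersection-of-automorphism-groups argument, and part (b) rests on the same key claim (a point extraneous in both reducts is extraneous in the join) combined with the same counting via Proposition~\ref{prop:extra}. The only cosmetic difference is that you justify the key claim by passing to reducts of an arbitrary join-amalgamation, where the paper invokes the $R_i=R_i'$ description of separatedness, and your pigeonhole bookkeeping yields the marginally sharper bound $n \le r+r'-1$.
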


\begin{proof}
(a) Let $X$ be a member of $\fF \ast \fF'$. A permutation of the set $X$ preserves the $\fF \star \fF'$ structure if and only if it preserves both the $\fF$ structure and the $\fF'$ structure. We thus have
\begin{displaymath}
\Aut_{\fF \star \fF'}(X) = \Aut_{\fF}(X) \cap \Aut_{\fF'}(X),
\end{displaymath}
considering all three as subgroups of $\mathrm{Sym}(X)$, from which the statement follows.

(b) Suppose that every minimal marked structure of $\fF$ has $\le r$ elements, and define $r'$ analogously for $\fF'$. Suppose $(X, x)$ is a marked structure in $\fF \star \fF'$ with $\ge r+r'$ vertices. By Proposition~\ref{prop:extra}, there is an element $y \ne x$ that is extraneous in both the $\fF$ and $\fF'$ structures separately. But this implies that $y$ is extraneous in the $\fF \star \fF'$ structure; this is easy to see using the $R_i=R'_i$ perspective on separatedness discussed in \S \ref{ss:sep}. Thus any minimal marked structure in $\fF \star \fF'$ has $\le r+r'$ elements, and so $\FMM$ holds.
\end{proof}

Let $\fC_s$ be the class of $s$-colored finite sets. This is a Fra\"iss\'e class and clearly satisfies strong amalgamation. We let $\fF_s=\fC_s \ast \fF$, which we regard as the class of $s$-colored structures in $\fF$. If $\fF$ is a Fra\"iss\'e class with strong amalgamation then $\fF_s$ is also a Fra\"iss\'e class with strong amalgamation; moreover, if $\fF$ satisfies $\AUT(m)$ then so does $\fF_s$. We now show that $\fF_s$ satisfies $\FMM$ when $\fF$ does; note that the above proposition does not give this since $\fC_s$ does not satisfy $\FMM$ (see Example~\ref{ex:fmm}).

\begin{proposition} \label{prop:colored}
Given a minimimal marked structure in $\fF_s$, the underlying marked structure in $\fF$ is also minimal. In particular, if $\fF$ satisfies $\FMM$ then so does $\fF_s$.
\end{proposition}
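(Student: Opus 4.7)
The plan is to prove the first assertion by contrapositive: I take a marked structure $(X,x)$ in $\fF_s$ whose underlying $\fF$-marked structure admits an extraneous point $y \ne x$, and I show that $y$ is extraneous in $(X,x)$ viewed in $\fF_s$. Since the signature of $\fF_s = \fC_s \ast \fF$ is the disjoint union of the signatures of $\fC_s$ and $\fF$, every $\fF_s$-structure decomposes as a pair consisting of an $\fF$-structure and a $\fC_s$-structure on the same underlying set, and an $\fF_s$-amalgamation of $X\setminus y$ and $X\setminus x$ over $X\setminus\{x,y\}$ amounts to an $\fF$-amalgamation together with a $\fC_s$-amalgamation on the common underlying set $X$ (the set itself is fixed by strong amalgamation).

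The key observation is that the $\fC_s$-amalgamation is forced: every element of $X$ already has its color prescribed by one of the factors---the color of $x$ is inherited from $X\setminus y$, the color of $y$ from $X\setminus x$, and the colors of the remaining elements from $X\setminus\{x,y\}$---so there is no freedom whatsoever. Combined with the hypothesis that the $\fF$-amalgamation is uniquely $X$, this shows the $\fF_s$-amalgamation is also uniquely $X$, and hence $y$ is separated from $x$ in $(X,x)$ viewed in $\fF_s$. This establishes the contrapositive and therefore the first assertion.

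For the ``in particular'' clause, assume $\fF$ satisfies $\FMM$, so the minimal marked structures in $\fF$ form a finite collection of isomorphism classes of bounded cardinality, say $\le r$. By the first assertion, the underlying $\fF$-structure of any minimal $\fF_s$-marked structure lies in this finite collection, and since there are at most $s^r$ ways to color a set of size $\le r$, the class $\fF_s$ admits only finitely many minimal marked structures. I do not anticipate a genuine obstacle here; the only point requiring care is the decomposition of $\fF \ast \fC_s$-amalgamations into component amalgamations, but this is immediate from the signature of $\fF \ast \fC_s$ being the disjoint union of the two signatures, so that the $\fF$- and $\fC_s$-relations interact in no way beyond sharing a common carrier set.
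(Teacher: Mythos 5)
Your proof is correct and takes essentially the same route as the paper's: you show that an $\fF$-extraneous point stays $\fF_s$-extraneous because in any amalgamation the colour of every single point is already prescribed by one of the two factors, which is exactly the paper's ``$R_i=R'_i$'' argument phrased directly in terms of amalgamations (and your appeal to strong amalgamation to fix the underlying set is legitimate given the standing hypotheses of that subsection, though uniqueness of the $\fF$-amalgamation already rules out identifications). The finiteness count for the ``in particular'' clause is also the intended one.
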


\begin{proof}
Let $X$ be a structure in $\fF_s$. Suppose that $x,y \in X$ are $\fF$-separated. We claim that they are $\fF_s$-separated. Clearly, since $x$ and $y$ are not $\fF$-similar they are not $\fF_s$-similar. The claim now follows easily from the $R_i=R'_i$ perspective from \S \ref{ss:sep}. If $(X,x)$ is now a marked structure in $\fF_s$ then any point that is $\fF$-extraneous is also $\fF_s$-extraneous; thus if $(X,x)$ is $\fF_s$-minimal then it is $\fF$-minimal.
\end{proof}

\subsection{Examples} \label{ss:ex-details}

We now provide some details on the examples from \S \ref{ss:ex}.

(a) Let $\fF_s$ be the class of finite sets equipped with $s$ total orders. Thus $\fF_s=\fF_1^{\star s}$. Since $\fF_1$ satisfies strong amalgamation, it follows that $\fF_s$ is a Fra\"iss\'e class. Since $\fF_1$ also satisfies $\FMM$ (Example~\ref{ex:fmm}) and $\AUT(1)$ (obvious), so does $\fF_s$ by Proposition~\ref{prop:star}. Thus Theorem~\ref{mainthm2} applies to $\fF_s$ with $m=1$. In \S \ref{ss:ex}, we claimed that a minimal marked structure in $\fF_s$ has at most $2s+1$ points. This can be proved using arguments as in \cite[\S 3.2]{homoperm}.

(b) Let $\fF_s$ be the class of finite sets equipped with a total order and an $s$-coloring. This is a Fra\"iss\'e class and obviously satisfies $\AUT(1)$. By Example~\ref{ex:fmm} and Proposition~\ref{prop:colored}, any minimal marked structure in $\fF_s$ has at most three points, and so, in particular, $\FMM$ holds. Thus Theorem~\ref{mainthm2} applies to $\fF_s$ with $m=1$.

(c) Let $\fF$ be the class of boron trees, and let $\fF_s=\fC_s \star \fF$ be the $s$-colored version. Since $\fF$ is a Fra\"iss\'e class satisfying strong amalgamation (see the ``First variation'' following \cite[Proposition~3.2]{CameronTrees}), it follows that $\fF_s$ is a Fra\"iss\'e class. Any automorphism group in $\fF$ has order $3 \cdot 2^r$ for some $r$ \cite[Lemma~17.13]{repst}, and so the same is true for $\fF_s$. In particular, $\fF_s$ satisfies $\AUT(6)$, and even the refined version $\AUT(3,2)$ from Remark~\ref{rmk:refined}. The analysis from \cite[\S 3.4]{arboreal} shows that the minimal marked structures in $\fF$ have $\le 5$ elements, and so the same is true for $\fF_s$ by Proposition~\ref{prop:colored}; in particular, $\FMM$ holds. Thus Theorem~\ref{mainthm2} applies to $\fF_s$ with $m=6$.

\end{document}